\theoremstyle{plain}
\newtheorem{theorem}{Theorem}[section]
\newtheorem{proposition}[theorem]{Proposition}
\newtheorem{lemma}[theorem]{Lemma}
\newtheorem{corollary}[theorem]{Corollary}
\numberwithin{equation}{section}
\theoremstyle{definition}
\newtheorem{definition}[theorem]{Definition}
\newtheorem{remark}[theorem]{Remark}
\newtheorem{example}[theorem]{Example}
\newtheorem{question}[theorem]{Question}
\newtheorem{convention}[theorem]{Convention}
\newcommand{\C}{\mathbb{C}}
\newcommand{\R}{\mathbb{R}}
\newcommand{\Z}{\mathbb{Z}}
\newcommand{\CP}{\mathbb{C}P}
\newcommand{\cF}{\mathcal{F}}
\newcommand{\N}{\mathbb{N}}
\def\red#1{{\textcolor{red}{#1}}}
\DeclareMathOperator{\wed}{wedge}
\DeclareMathOperator{\link}{Lk}
\DeclareMathOperator{\proj}{Proj}
\DeclareMathOperator{\relint}{relint}
\DeclareMathOperator{\pos}{pos}
\DeclareMathOperator{\Pic}{Pic}
\DeclareMathOperator{\conv}{conv}
\begin{document}
\title[Wedge operations and torus symmetries]{Wedge operations and torus symmetries}

\author[S.Choi]{Suyoung Choi}
\address{Department of Mathematics, Ajou University, San 5, Woncheondong, Yeongtonggu, Suwon 443-749, Korea}
\email{schoi@ajou.ac.kr}

\author[H.Park]{Hanchul Park}
\address{Department of Mathematics, Ajou University, San 5, Woncheondong, Yeongtonggu, Suwon 443-749, Korea}
\email{hpark@ajou.ac.kr}

\thanks{The authors were partially supported by Basic Science Research Program through the National Research Foundation of Korea(NRF) funded by the Ministry of Education, Science and Technology(NRF-2012R1A1A2044990). The first author was additionally supported by TJ Park Science Fellowship funded by POSCO TJ Park Foundation.}

\subjclass[2010]{14M25, 52B20, 52B35}
\keywords{toric variety, projective toric variety, Gale diagram, simplicial wedge, topological toric manifold, real topological toric manifold, quasitoric manifold, small cover, real toric variety}

\date{\today}

\begin{abstract}
    A fundamental result of toric geometry is that there is a bijection between toric varieties and fans.
    More generally, it is known that some class of manifolds having well-behaved torus actions, called topological toric manifolds $M^{2n}$, can be classified in terms of combinatorial data containing simplicial complexes with $m$ vertices.
    We remark that topological toric manifolds are a generalization of smooth toric varieties.
    The number $m-n$ is known as the Picard number when $M^{2n}$ is a {compact smooth} toric variety.

    In this paper, we investigate the relationship between the topological toric manifolds over a simplicial complex $K$ and those over the complex obtained by simplicial wedge operations from $K$. As applications, we do the following.
    \begin{enumerate}
        \item We classify smooth toric varieties of Picard number $3$. This is a reproving of a result of Batyrev.
        \item We give a new and complete proof of projectivity of smooth toric varieties of Picard number $3$ originally proved by Kleinschmidt and Sturmfels.
        \item We find a criterion for a toric variety over the join of boundaries of simplices to be projective. When the toric variety is smooth, it is known as a generalized Bott manifold which is always projective.
    %    \item We classify topological toric manifolds $M(K,\lambda)$ when $K$ is a join of boundaries of simplices. These are exactly quasitoric manifolds over a product of simplices studied by Choi, Masuda, and Suh. When $M(K,\lambda)$ is a toric variety, then it becomes a generalized Bott manifold.
        \item We classify and enumerate real topological toric manifolds when $m-n=3$. In particular, when $P$ is a polytope whose Gale diagram is a pentagon with assigned numbers $(a_1,a_3,a_5,a_2,a_4)$, then every real topological toric manifold over $P$ is a real toric variety, and the number \#DJ of them up to Davis-Januszkiewicz equivalence is
        \[
             \hbox{\#DJ}= 2^{a_1+a_4-1}+2^{a_2+a_5-1}+2^{a_3+a_1-1}+2^{a_4+a_2-1}+2^{a_5+a_3-1}-5.
         \] When $P$ is a polytope whose Gale diagram is a heptagon with arbitrary assigned numbers, no real topological toric manifold over $P$ is a real toric variety, and we have $\#DJ=2$.
        \item When $m-n \leq 3$, any real topological toric manifold is realizable as fixed points of the conjugation of a topological toric manifold.
    \end{enumerate}

%    As an application, we classify all real toric varieties with a few generators. Furthermore, we give simpler and complete proofs of classification and projectivity of complex toric varieties with a few generators.
\end{abstract}

\maketitle

\tableofcontents

\section{Introduction}

    A \emph{toric variety}, which arose in the field of algebraic geometry, of dimension $n$ is a normal algebraic variety with an action of an algebraic torus $(\C^\ast)^n$ having a dense orbit. A compact smooth toric variety is sometimes called a \emph{toric manifold}. By regarding $S^1$ as the unit circle in $\C^\ast$, there is a natural action of $T^n = (S^1)^n\subset (\C^\ast)^n$ on a toric variety. Instead of an algebraic torus action on an algebraic variety, one could think of a smooth torus ($T^n$ or $(\C^\ast)^n$) action on a smooth manifold. Since the pioneering work of Davis and Januszkiewicz \cite{DJ91}, a number of categories of manifolds which admit certain torus actions have been proposed as topological analogues of smooth toric varieties.

    A \emph{torus manifold} introduced in \cite{HM03} is a closed smooth orientable manifold of dimension $2n$ which admits an effective $T^n$-action with the non-empty fixed points set.
%    One of important and classical examples of torus manifolds is a smooth compact toric variety which arose in the field of algebraic geometry. The formal definition of \emph{toric variety} is a normal algebraic variety with an action of algebraic torus $(\C^\ast)^n$ action having a dense orbit.
    Since every toric manifold admits a $T^n$-action, any compact smooth  toric variety is a torus manifold.

    A \emph{quasitoric manifold}\footnote{The authors would like to indicate that the notion of quasitoric manifolds originally appeared under the name ``toric manifolds'' in \cite{DJ91}. Later, it was renamed in \cite{BP} in order to avoid confusion with smooth compact toric varieties.} introduced in \cite{DJ91} is a closed smooth $2n$-manifold $M$ with an effective $T^n$-action such that
    \begin{enumerate}
      \item the torus action is \emph{locally standard}: i.e., it is locally isomorphic to the standard action of $T^n$ on $\R^{2n}$,
      \item the orbit space $M/T^n$ can be identified with a simple polytope $P^n$.
    \end{enumerate}
    A quasitoric manifold is surely a torus manifold. Moreover, every smooth projective toric variety is a quasitoric manifold.
    As far as the authors know, there is no known example of a (non-projective) toric manifold whose $T^n$-orbit is not a simple polytope. In other words, every known example of toric manifolds is a quasitoric manifold.

    A \emph{topological toric manifold} defined in \cite{IFM12} is a closed smooth $2n$-manifold $M$ with an effective smooth $(\C^\ast)^n$-action such that there is an open and dense orbit and $M$ is covered by finitely many invariant open subsets each of which is equivariantly diffeomorphic to a smooth representation space of $(\C^\ast)^n$. Every topological toric manifold is a torus manifold. Furthermore, every toric manifold and quasitoric manifold is a topological toric manifold by \cite{IFM12}. Therefore, we obtain a diagram of inclusions of manifolds equipped with torus actions:

%\begin{center}
%\psset{unit=3.5pt}
%\begin{pspicture}(0,0)(80,50)
%    \psframe[framearc=.2](0,0)(80,50) \rput(40,47){\large{Torus Manifold}}
%%    \psframe[linewidth=.5pt, framearc=.2](5,3)(75,43) \rput(40,41){$H^{odd} = 0$}
%    \psframe[linewidth=.5pt, framearc=.2](6,4)(74,39) \rput(40,36){Topological Toric Manifold}
%    \psframe[linewidth=.5pt, framearc=.2](10,8)(52.5,30) \rput(20,20){ \begin{tabular}{c}Quasitoric \\ Manifold \end{tabular}}
%    \psframe[linewidth=.5pt, framearc=.2](29.5,12)(70,26) \rput(60,20){ \begin{tabular}{c}Toric \\ Manifold \end{tabular}}
%    \psframe[linewidth=.5pt, framearc=.2](31.5,14)(50.5,24) \rput(40,19){ \small{ \begin{tabular}{c}Projective \\ Toric Manifold \end{tabular}}}
%\end{pspicture}
%\end{center}

{
\begin{center}
\begin{tikzpicture}[scale=0.125]
    \draw[thick, rounded corners=20pt] (0,0) rectangle (80,50);
    \draw (40,47) node {\large{Torus Manifold}};
    \draw[rounded corners=15pt] (6,4) rectangle (74,39);
    \draw (40,36) node {Topological Toric Manifold};
    \draw[rounded corners=10pt] (10,8) rectangle (52.5,30);
    \draw (20,20) node [align=center]{Quasitoric \\Manifold};
    \draw[rounded corners=10pt] (29.5,12) rectangle (70,26);
    \draw (60,20) node [align=center]{Toric \\Manifold};
    \draw[rounded corners=5pt] (31.5,14) rectangle (50.5,24);
    \draw (41,19) node [align=center]{\small Projective \\ \small Toric Manifold};
\end{tikzpicture}
\end{center}
}

    The theory of toric varieties has been grown up very highly for the last decades. One of the most important results for toric varieties is that there is a bijection between toric varieties and fans. Roughly speaking, a fan is a collection of strongly convex rational cones in $\R^n$ satisfying that each face of cones and each intersection of a finite number of cones are also in the fan. A fan is said to be \emph{complete} if the union of all cones covers the whole space $\R^n$, and is said to be \emph{non-singular} if one-dimensional faces (simply, rays) of each cone are unimodular in $\Z^n$. It is known that a toric variety is compact (resp. smooth) if and only if its corresponding fan is complete (resp. non-singular). Therefore, there is a bijection between toric manifolds and complete non-singular fans.

    We note that a complete non-singular fan can be regarded as a pair of a simplicial complex and the data of rays. That is, for a given complete non-singular fan $\Sigma$ of dimension $n$, one obtains a pair $(K,\lambda)$, where $K$ is the face complex of $\Sigma$ and $\lambda$ is the map which assigns the primitive integral vector in $\Z^n$ representing a ray of $\Sigma$ to the corresponding vertex of $K$. Such a pair $(K,\lambda)$ is called a \emph{characteristic map} of dimension $n$.

    Topological toric manifolds (and quasitoric manifolds) also have their characteristic maps characterizing them, enabling the notation $M = M(K,\lambda)$. {A \emph{characteristic map} of dimension $n$ is defined as the pair $(K,\lambda)$ of an abstract simplicial complex $K$ of dimension $\le n-1$ and a map $\lambda\colon V(K)\to \Z^n$ so that $\{\lambda(i)\mid i\in \sigma\}$ is a linearly independent set over $\R$ for any face $\sigma$ of $K$, where $V(K) =[m]$ is the vertex set of $K$.}
%    , although they do not necessarily come from a fan which we call \emph{fan-giving}. More precisely,
    Meanwhile, when $M$ is a quasitoric manifold, $K$ is the face complex of a simplicial polytope. Such a simplicial complex is said to be \emph{polytopal}. When $M$ is a topological toric manifold, $K$ is the underlying simplicial complex of a complete fan. Such a simplicial complex is said to be \emph{fan-like}.

    The completeness and non-singularity of the characteristic maps are defined similarly. We emphasize that the characteristic map is a useful tool connecting topology of manifolds and combinatorics of the underlying simplicial complexes.
%, which is \red{often to be} a face complex complex of a simple polytope.

    There is a classical operation of simplicial complexes called the \emph{simplicial wedge operation} (refer \cite{PB80} for example).
    As shown by Bahri-Bendersky-Cohen-Gitler in \cite{BBCG10}, it is deeply related with polyhedral products and generalized moment angle complexes and is gaining more interests in the field of toric theory.
    Let $K$ be a simplicial complex with $m$ vertices and fix a vertex $v$. Consider a 1-simplex $I$ whose vertices are $v_{1}$ and $v_{2}$ and denote by $\partial I$ the 0-skeleton of $I$. Now, let us define a new simplicial complex on $m+1$ vertices, called the \emph{(simplicial) wedge} of $K$ at $v$, denoted by $\wed_{v}(K)$, by
    \[ \wed_{v}(K)= (I \star \link_K\{v\}) \cup (\partial I \star (K\setminus\{v\})), \]
    where $K\setminus\{v\}$ is the induced subcomplex with $m-1$ vertices except $v$, the $\link_K\{v\}$ is the link of $v$ in $K$,
    %the subcomplex $\link_K \{v\} = \{ \tau \in K \mid \{v\} \cup\tau\in K, v \not\in \tau \}$ of $K$,
    and $\star$ is the join operation of simplicial complexes.
    Let $P$ be a simple polytope whose face structure is $K$. A wedge of $P$ is defined as the simple polytope whose face structure is isomorphic to a wedge of $K$.
    For a given characteristic map $(K,\lambda)$ and the associated topological toric manifold $M$, there is a natural construction of a new topological toric manifold whose underlying complex is a wedge $\wed_v(K)$.
    In his paper \cite{Ewa86}, Ewald introduced the construction for toric varieties and he called it the \emph{canonical extension}. In the paper \cite{BBCG10}, the authors rediscovered the idea and defined essentially the same manifold using the notation $M(J)$ in the category of quasitoric manifolds.
    {
    Let $(K,\lambda)$ be a characteristic map of dimension $n$ and $\sigma$ a face of $K$ such that the vectors $\lambda(i)$, $i\in\sigma$, are unimodular. Then a characteristic map $(\link_K\sigma, \proj_\sigma \lambda)$, called the \emph{projected characteristic map}, is defined by the map
    \[
        (\proj_\sigma \lambda )(v) = [\lambda(v)]\in \Z^n/\langle \lambda(w)\mid w\in\sigma\rangle \cong \Z^{n - |\sigma|}.
    \]
    }
    %where $n$ is the dimension of the topological toric manifold corresponding to $(K,\lambda)$.
    The canonical extension of $(K,\lambda)$ is determined by the characteristic map $(\wed_{v}(K),\lambda')$ where $\proj_{v_1}\lambda' = \proj_{v_2}\lambda' = \lambda$.

    Generalizing this concept, we try to find every non-singular characteristic map whose underlying simplicial complex is $\wed_{v}(K)$. Let $K$ be a fan-like simplicial sphere of dimension $n-1$ equipped with an orientation $o$ as a simplicial manifold. Then the characteristic map $(K,\lambda)$ is said to be \emph{positively oriented} if the sign of $\det(\lambda(i_1),\dotsc,\lambda(i_n))$ coincides with $o(\sigma)$ for any oriented maximal simplex $\sigma=(i_1, \ldots, i_n)\in K$.

    The following is our main result:
    \begin{theorem}\label{thm:mainthm}
        Let $K$ be a fan-like simplicial sphere and $v$ a given vertex of $K$. Let  $(\wed_v(K), \lambda)$ be a characteristic map and let $v_1$ and $v_2$ be the two new vertices of $\wed_v(K)$ created from the wedging. {Let us assume that $\{\lambda(v_1),\lambda(v_2)\}$ is a unimodular set.} Then $\lambda$ is uniquely determined by the projections $\proj_{v_1}\lambda$ and $\proj_{v_2}\lambda$. Furthermore,
        \begin{enumerate}
            \item $\lambda$ is non-singular if and only if so are $\proj_{v_1}\lambda$ and $\proj_{v_2}\lambda$.
            \item $\lambda$ is positively oriented if and only if so are $\proj_{v_1}\lambda$ and $\proj_{v_2}\lambda$.
            \item $\lambda$ is fan-giving if and only if so are $\proj_{v_1}\lambda$ and $\proj_{v_2}\lambda$.
        \end{enumerate}
    \end{theorem}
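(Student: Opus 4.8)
The plan is to set up explicit coordinates on $\wed_v(K)$ that reflect the wedge construction, write down what a characteristic map $\lambda$ must look like in these coordinates, and then translate each of the three properties (non-singularity, positive orientation, fan-givingness) into statements about the two projected maps. First I would recall that the vertex set of $\wed_v(K)$ is $(V(K)\setminus\{v\})\cup\{v_1,v_2\}$, and that the maximal faces of $\wed_v(K)$ are exactly the sets of the form $\{v_1,v_2\}\cup(\tau\setminus\{v\})$ where $\tau$ is a maximal face of $K$ containing $v$, together with $\{v_1\}\cup\tau$ and $\{v_2\}\cup\tau$ where $\tau$ is a maximal face of $K$ not containing $v$. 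Since $\{\lambda(v_1),\lambda(v_2)\}$ is assumed unimodular, after a change of basis in $\Z^n$ I may assume $\lambda(v_1)=e_1$ and $\lambda(v_2)=e_1+e_2$ (or some equally convenient normalization), so that both projections $\proj_{v_1}\lambda$ and $\proj_{v_2}\lambda$ land in a copy of $\Z^{n-1}$ that I can identify concretely. For any other vertex $w$, writing $\lambda(w)=(a_w,b_w,\ast)$, the two projections record $(b_w,\ast)$ and $(a_w-b_w,\ast)$ respectively — wait, more carefully: $\proj_{v_1}$ kills the $e_1$-direction and $\proj_{v_2}$ kills $e_1+e_2$; in either case the last $n-2$ coordinates of $\lambda(w)$ are untouched, and the two projections differ only in one coordinate, which pins down both $a_w$ and $b_w$ once the two projected values are known. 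This is exactly the uniqueness statement: $\lambda$ is reconstructed from $(\proj_{v_1}\lambda,\proj_{v_2}\lambda)$.

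For part (1), the key computation is a determinant identity. For a maximal face $\tau\ni v$ of $K$, the face $\{v_1,v_2\}\cup(\tau\setminus\{v\})$ of $\wed_v(K)$ has characteristic matrix obtained by stacking $\lambda(v_1)$, $\lambda(v_2)$, and the $\lambda(w)$ for $w\in\tau\setminus\{v\}$; expanding this determinant along the first two rows (which are $e_1$ and $e_1+e_2$ after normalization) shows it equals, up to sign, the determinant of the $(n-1)\times(n-1)$ matrix whose rows are the last $n-2$ coordinates of $\lambda(w)$ together with... — precisely, it equals $\pm\det\bigl((\proj_{v_1}\lambda)(w)\bigr)_{w\in\tau\setminus\{v\}}$ because subtracting row $\lambda(v_1)$ from $\lambda(v_2)$ leaves $e_2$, and then clearing the first two columns of the remaining rows is exactly the projection $\proj_{v_1}$ applied to $\link_K\{v\}$. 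The same face also equals $\pm\det\bigl((\proj_{v_2}\lambda)(w)\bigr)$ by the symmetric argument. Meanwhile a maximal face $\{v_i\}\cup\tau$ with $v\notin\tau$ has determinant equal (up to sign) to $\det(\proj_{v_i}\lambda)(w)_{w\in\tau}$, and the other projection's corresponding face $\tau$ (which contains $v$ as a non-face... no — $\tau$ is a maximal face of $K\setminus\{v\}$, hence $\tau\cup\{v\}$ is a face of $K$ only if it was; here I must use that $K\setminus\{v\}$ records both links carefully). The upshot: the collection of all maximal-face determinants of $(\wed_v(K),\lambda)$ coincides, up to signs and repetition, with the union of the maximal-face determinants of $(\link_K\{v\},\proj_{v_1}\lambda)$-extended-over-$K$ and likewise for $v_2$; in particular all are $\pm1$ iff both projected maps are non-singular. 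Part (2) follows from the same determinant identities by tracking signs against a compatible choice of orientations on $\wed_v(K)$ and on the two copies of $K$ — one needs that the wedge of an oriented simplicial sphere carries a canonical orientation for which the bijection of facets above is orientation-compatible, which is a direct check on the join/star formula.

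Part (3), fan-givingness, is the subtle one and I expect it to be the main obstacle. Non-singularity and orientation are "local" conditions on individual maximal faces, but being fan-giving means the cones $\{\pos(\lambda(w):w\in\sigma):\sigma\in\wed_v(K)\}$ actually glue to a genuine complete fan — i.e. their relative interiors are pairwise disjoint and they cover $\R^n$. My plan is to exhibit the fan of $(\wed_v(K),\lambda)$ as built from the fan $\Sigma$ of $(K,\proj_{v_1}\lambda)$ (equivalently of $\proj_{v_2}\lambda$, which gives the same $\Sigma$) by an explicit geometric operation: place $\lambda(v_1)=e_1$ and $\lambda(v_2)$ on opposite... no, both on the same side; the correct picture is that $\R^n=\R\cdot e_1\oplus\R^{n-1}$ and the fan downstairs is "doubled" in the new direction — concretely the star of $v$ in $\Sigma$ gets subdivided using the two rays $\lambda(v_1),\lambda(v_2)$ while the rest of $\Sigma$ is coned off alternately over $\lambda(v_1)$ and $\lambda(v_2)$. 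I would verify completeness by a direct covering argument: any point of $\R^n$ projects to a point of $|\Sigma|=\R^{n-1}$, lies in some cone $\pos(\proj\lambda(w):w\in\sigma)$, and then its $e_1$-component decides which of finitely many lifted cones contains it. The converse — that fan-givingness upstairs forces it downstairs — follows by projecting the fan along $e_1$ (respectively along $\lambda(v_2)$) and checking the image is a fan with support $\R^{n-1}$; the only thing to rule out is overlaps being created or destroyed under projection, which the unimodularity of $\{\lambda(v_1),\lambda(v_2)\}$ controls. I would likely isolate the forward (harder) direction of (3) as a separate lemma about how complete fans behave under the wedge subdivision, proving it by induction on the number of maximal cones or by appealing to the star-subdivision description of the canonical extension and then deforming $\lambda$ within its combinatorial type.
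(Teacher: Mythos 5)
Your treatment of uniqueness and of parts (1) and (2) is essentially the paper's own argument (Proposition~\ref{prop:chrwedge}): normalize $\{\lambda(v_1),\lambda(v_2)\}$ to a unimodular pair of basis vectors, observe that every maximal face of $\wed_v(K)$ contains $v_1$ or $v_2$, and read the two projections off the block structure of the characteristic matrix. Those parts are fine.

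Part (3) is where the real content lies, and there your proposal has a genuine gap. First, a concrete error: you assert that $(K,\proj_{v_1}\lambda)$ and $(K,\proj_{v_2}\lambda)$ ``give the same $\Sigma$''. They do not --- they are fans in two different quotients of $\Z^n$, and after any identification of those quotients with $\R^{n-1}$ they are in general distinct fans (this is the whole point of the theorem: the \emph{pair} of projections determines $\lambda$; if the two projections always agreed, only canonical extensions would arise). Your ``doubling'' picture of the fan upstairs is built on this false premise. Second, even setting that aside, the forward direction of (3) --- that the cones of $\lambda$ have pairwise disjoint relative interiors and cover $\R^{n+1}$ --- is never actually proved: you offer three alternative, unexecuted strategies (``direct covering argument'', ``induction on the number of maximal cones'', ``deforming $\lambda$ within its combinatorial type''), and the last is particularly suspect, since fan-givingness is not preserved under deformation of a characteristic map within a fixed combinatorial type. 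The paper resolves exactly this global gluing issue by degree theory: a complete characteristic map is fan-giving if and only if it is positively oriented and the induced map $f_\lambda$ from $|{\wed_v(K)}|$ to the sphere has degree one (Proposition~\ref{prop:toddandfangiving}, after Ishida--Masuda). Because each projection is fan-giving, $f_\lambda$ restricted to each closed star $\overline{\operatorname{St}\{v_i\}}$ is an embedding of a disc; the two closed stars cover $\wed_v(K)$ and their intersection contains a maximal simplex, so a generic point has exactly one preimage and the degree is one (Proposition~\ref{prop:FanandWedge}). Some device of this kind --- one that controls the global count of overlapping cones rather than only local non-degeneracy --- is what your sketch is missing.
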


    Combining this with the fact that $K$ is polytopal if and only if its wedge is polytopal, we can say:
    \begin{itemize}
    \item If one knows every topological toric manifold over $K$, then we know every topological toric manifold over a wedge of $K$.
    \item If one knows every quasitoric manifold over $P$, then we know every quasitoric manifold over a wedge of $P$.
    \item If one knows every toric manifold over $K$, then we know every toric manifold over a wedge of $K$.
    \end{itemize}
    We sometimes use the colloquial term ``toric objects'' to indicate one of the three categories. In this paper, we would claim that the above theorem is efficiently applicable to classify toric objects, and, hence, we can easily deduce the properties of given toric objects. In fact, we have many applications as below.

    To classify toric manifolds or topological toric manifolds, it seems natural to classify their underlying simplicial complexes first. Let $m$ be the number of rays of a complete non-singular fan of dimension $n$, and $K$ the corresponding simplicial complex of dimension $n-1$. The Picard number of $K$, denoted by $\Pic(K)$, is defined as $m-n$.\footnote{It coincides with the Picard number of the corresponding toric variety.} For instance, if $\Pic(K)=1$, then $K$ is the boundary complex of the $n$-simplex. It is known that only $\CP^n$ is the toric manifold supported by $K$. If $\Pic(K)=2$, then $K$ is the join of boundaries of two simplices (see \cite{Gru03}), and all toric manifolds over $K$ are classified by Kleinschmidt \cite{K}. More generally, every toric manifold over the join of boundaries of simplices is known as a generalized Bott manifold. Such manifolds are studied by several literatures such as \cite{oda88}, \cite{Ba}, \cite{Do}, \cite{CMS10}.

    However, not all simplicial spheres of $\Pic(K)=3$ support a toric manifold. Due to \cite{GKB90}, we have the complete criterion of simplicial complexes to support a toric manifold, and using this, Batyrev \cite{Ba} classified toric manifolds with Picard number $3$ as varieties. In this paper, we observe that every simplicial complex supporting smooth toric varieties is obtainable by a sequence of wedge operations from either a cross polytope or a pentagon (recall that the $n$-cross polytope is the dual of the $n$-cube). Hence, as an application of Theorem~\ref{thm:mainthm}, we classify toric manifolds with Picard number $3$ up to Davis-Januszkiewicz equivalence as quasitoric manifolds. Then, using the symmetry of a pentagon, we can get also the classification as varieties which is a reproving of the Batyrev's result.
    % using the techniques of wedge operations presented above and Gale diagrams.

    In the category of projective toric varieties, the situation does not go very well like Theorem~\ref{thm:mainthm}. In fact, there is a (singular) non-projective toric variety over $\wed_v(K)$ whose projections with respect to $v_1$ and $v_2$ are projective respectively. But we can still show projectivity of some families of toric varieties, containing toric manifolds with Picard number 3, with Shephard's projectivity criterion \cite{She71}, \cite{Ewa86}.
    The fact that every toric manifold of $ \operatorname{Pic}(K) \le 3$ is projective was originally shown by \cite{KS91}, but their method was lengthy and cumbersome case-by-case approach and the paper does not contain the whole proof due to its length and repetitive calculations. In Section~\ref{sec:applications}, a new and complete proof of the fact will be given. Moreover, we will provide a criterion of compact (singular) toric variety over  the join of boundaries of simplices with arbitrary Picard number to be projective. We note that when such a toric variety is smooth, it is known as a generalized Bott manifold which is always projective.

    When $M$ is a toric variety of complex dimension $n$, there is a canonical involution on $M$ and its fixed points form a real subvariety of real dimension $n$, called a \emph{real toric variety}. Similarly, there are ``real'' versions of topological toric manifolds and quasitoric manifolds called \emph{real topological toric manifolds} and \emph{small covers}, respectively. Such real analogues of toric objects also can be described as a $\Z_2$-version of characteristic map $(K,\lambda)$, that is, the codomain of $\lambda$ is $\Z_2^n$ instead of $\Z^n$. The map $\proj_v(\lambda)$ of a characteristic map over $\Z_2$ also can be defined similarly. Then, we have the $\Z_2$-version of Theorem~\ref{thm:mainthm} as the following:
    \begin{theorem}\label{thm:mainthm2}
        Let $K$ be a fan-like simplicial sphere and $v$ a given vertex of $K$. Let $(\wed_v(K), \lambda)$ be a characteristic map over $\Z_2$ and let $v_1$ and $v_2$ be the two new vertices of $\wed_v(K)$ created from the wedging. Then $\lambda$ is uniquely determined by the projections $\proj_{v_1}\lambda$ and $\proj_{v_2}\lambda$. Furthermore, $\lambda$ is non-singular if and only if  so are $\proj_{v_1}\lambda$ and $\proj_{v_2}\lambda$.
    \end{theorem}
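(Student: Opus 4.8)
The plan is to transcribe the combinatorial mechanism behind Theorem~\ref{thm:mainthm} to the field $\Z_2$, where it shrinks considerably: there is neither an orientation nor a determinant sign to monitor, so the non-singularity equivalence is the whole content, which is exactly why Theorem~\ref{thm:mainthm2} has no analogue of the ``positively oriented'' and ``fan-giving'' clauses. First I would observe that the unimodularity assumption made in Theorem~\ref{thm:mainthm} is automatic in the present setting: $v_1$ and $v_2$ span the $1$-simplex $I$, which is a face of $\wed_v(K)$, so $\{\lambda(v_1),\lambda(v_2)\}$ is linearly independent over $\Z_2$, and over a field every linearly independent set is unimodular. I would then fix a basis of $\Z_2^{n+1}$ with $\lambda(v_1)=e_1$ and $\lambda(v_2)=e_2$; this is harmless, since applying an element of $GL(n+1,\Z_2)$ changes neither the non-singularity of $\lambda$ nor its projected maps up to isomorphism.

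Next I would isolate the two combinatorial facts about wedges that carry the argument. Write $L=\link_K\{v\}$, so that $\wed_v(K)=(I\star L)\cup(\partial I\star(K\setminus\{v\}))$. First, every facet of $\wed_v(K)$ meets $\{v_1,v_2\}$: a facet of $\wed_v(K)$ is a facet of one of the two displayed pieces, and a facet of $I\star L$ contains $I=\{v_1,v_2\}$ while a facet of $\partial I\star(K\setminus\{v\})$ contains $v_1$ or $v_2$. Second, for any simplicial complex $X$ and vertex $u$, the assignment $F\mapsto F\setminus\{u\}$ is a bijection from the facets of $X$ containing $u$ onto the facets of $\link_X\{u\}$. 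I would also use the standard identification $\link_{\wed_v(K)}\{v_1\}\cong K$, given by $v_2\mapsto v$ and the identity on the other vertices (and symmetrically for $v_2$), so that $\proj_{v_1}\lambda$ and $\proj_{v_2}\lambda$ are characteristic maps over a copy of $K$, as required for the statement to make sense.

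Uniqueness then requires no computation. For $w\notin\{v_1,v_2\}$ the values $\proj_{v_1}\lambda(w)$ and $\proj_{v_2}\lambda(w)$ are, by definition, the images of $\lambda(w)$ under the quotient maps $\Z_2^{n+1}\to\Z_2^{n+1}/\langle\lambda(v_1)\rangle$ and $\Z_2^{n+1}\to\Z_2^{n+1}/\langle\lambda(v_2)\rangle$; since $\langle\lambda(v_1)\rangle\cap\langle\lambda(v_2)\rangle=0$, the map $\Z_2^{n+1}\to\bigl(\Z_2^{n+1}/\langle\lambda(v_1)\rangle\bigr)\times\bigl(\Z_2^{n+1}/\langle\lambda(v_2)\rangle\bigr)$ with components the two quotient maps is injective, so $\lambda(w)$ is recovered, and together with $\lambda(v_1)=e_1$, $\lambda(v_2)=e_2$ this determines $\lambda$. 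For the non-singularity statement I would use that a $\Z_2$-characteristic map is non-singular precisely when its values on each facet form a $\Z_2$-basis of the ambient vector space. If $v_1\in F$, then $\lambda(v_1)$ is among the vectors $\lambda(i)$, $i\in F$, so this set is a basis of $\Z_2^{n+1}$ if and only if its image is a basis of $\Z_2^{n+1}/\langle\lambda(v_1)\rangle$; that is, $\lambda$ is non-singular on the facet $F$ if and only if $\proj_{v_1}\lambda$ is non-singular on the facet $F\setminus\{v_1\}$ of $\link_{\wed_v(K)}\{v_1\}$. By the second combinatorial fact the faces $F\setminus\{v_1\}$ range over all facets of $\link_{\wed_v(K)}\{v_1\}$ as $F$ ranges over facets of $\wed_v(K)$ through $v_1$, so $\lambda$ is non-singular on every facet through $v_1$ if and only if $\proj_{v_1}\lambda$ is non-singular; the same holds with $v_2$ in place of $v_1$. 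By the first fact every facet of $\wed_v(K)$ passes through $v_1$ or $v_2$, and therefore $\lambda$ is non-singular if and only if $\proj_{v_1}\lambda$ and $\proj_{v_2}\lambda$ are both non-singular.

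The genuinely routine ingredients are the linear algebra over $\Z_2$ (``a set of vectors containing a basis vector $e$ is a basis if and only if its image modulo $e$ is'') and the injectivity used for uniqueness, neither of which needs care. The one step I would handle attentively is the combinatorial bookkeeping — establishing that every facet of $\wed_v(K)$ meets $\{v_1,v_2\}$ and pinning down the isomorphism $\link_{\wed_v(K)}\{v_1\}\cong K$ — but this is precisely the bookkeeping already needed for Theorem~\ref{thm:mainthm}, so I do not expect any genuinely new difficulty in the $\Z_2$ setting.
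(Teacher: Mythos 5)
Your proof is correct and follows essentially the same route as the paper, which establishes Theorem~\ref{thm:mainthm2} by observing that the argument of Proposition~\ref{prop:chrwedge} (normalize $\lambda(v_1)=e_1$, $\lambda(v_2)=e_2$; note that every facet of $\wed_v(K)$ contains $v_1$ or $v_2$; recover $\lambda(w)$ from its two quotient images) carries over verbatim to $\Z_2$ once the orientation clause is dropped. Your added remark that the unimodularity hypothesis of Theorem~\ref{thm:mainthm} is automatic over the field $\Z_2$, because $\{v_1,v_2\}$ is a face of $\wed_v(K)$, is a correct observation that the paper leaves implicit.
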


    As corollaries, we classify and enumerate real topological toric manifolds and smooth real toric varieties with Picard number $3$. By \cite{CMS10}, every real topological toric manifold over  the join of boundaries of simplices are indeed a real toric variety known as a generalized real Bott manifold. Moreover, the classification of real toric manifolds over the join of boundaries of simplices is given in \cite{CMS10}, and the number of generalized real Bott manifolds with Picard number $3$ is presented in \cite{Choi08}.
    In this paper, if $K$ with $\Pic(K)=3$ supports a real topological toric manifold, then $K$ should be obtainable by a sequence of wedge operations from a 3-cross polytope, a pentagon, or a $4$-cyclic polytope with $7$ vertices. Equivalently, a Gale diagram of $K$ is a triangle, a pentagon, or a heptagon.
    Furthermore, we will give a complete classification of them up to Davis-Januszkiewicz equivalence, and count them. In particular, when $P$ is a simple polytope whose Gale diagram is a pentagon with assigned numbers $(a_1,a_3,a_5,a_2,a_4)$, every real topological toric manifold over $P$ is a real toric variety, and  the number \#DJ of them up to Davis-Januszkiewicz equivalence is
        \[
             \hbox{\#DJ}= 2^{a_1+a_4-1}+2^{a_2+a_5-1}+2^{a_3+a_1-1}+2^{a_4+a_2-1}+2^{a_5+a_3-1}-5.
         \]
    When $P$ is a polytope whose Gale diagram is a heptagon with arbitrary assigned numbers, no real topological toric manifold over $P$ is a real toric variety, and we have $\hbox{\#DJ}=2$. Meanwhile, although such a manifold is not a real toric variety, we can see that any characteristic map $(\partial P^\ast ,\lambda)$ over $\Z_2$ is congruent to some characteristic map $(\partial P^\ast , \widetilde{\lambda})$ over $\Z$ up to modulo $2$. This observation provides an affirmative partial answer to so-called the \emph{lifting problem} which asks whether for given $K$, any real topological toric manifold over $K$ can be realized as fixed points of the conjugation of a topological toric manifold or not. That is, the answer to the lifting problem is affirmative for $\Pic(K) \leq 3$.

    The paper is organized as follows. In Section~\ref{sec:wedge}, we define wedge operations of simplicial complexes and study some of their properties related to toric objects. In Section~\ref{sec:torictopology}, we introduce some categories containing toric objects and their associated combinatorial objects such as fans, multi-fans, and characteristic maps. In Section~\ref{sec:toricandwedge} we prove the main result. In Section~\ref{sec:wedgeandprojectivity}, we introduce the Shephard diagram and Shephard's criterion of projectivity of toric varieties. In Section~\ref{sec:applications}, as an application of the main result, we give a classification of smooth toric varieties of Picard number 3. In Section~\ref{sec:projectivity}, we prove that smooth toric varieties of Picard number 3 are projective and give a criterion of when a toric variety over the join of boundaries of simplices is projective. We classify and count real topological toric manifolds over $K$ with $\Pic(K)=3$  in Section~\ref{sec:smallcover}.
    Lastly, we introduce the lifting problem of topological toric manifolds over $K$ and prove it for $\Pic(K) \le 3$ in Section~\ref{sec:liftingproblem}.

\section{Wedge operations of simplicial complexes}\label{sec:wedge}
    A \emph{simplicial complex} $K$ on a finite set $V$ is a collection of subsets of $V$ satisfying
    \begin{enumerate}
      \item if $v \in V$, then $\{ v \} \in K$,
      \item if $\sigma \in K$ and $\tau \subset \sigma$, then $\tau \in K$.
    \end{enumerate}

    Each element $\sigma \in K$ is called a \emph{face} of $K$. The dimension of $\sigma$ is defined by $\dim(\sigma)=|\sigma|-1$. The \emph{dimension} of $K$ is defined by $\dim(K) = \max \{ \dim(\sigma)\mid \sigma\in K\}$.

    There is a useful way to construct new simplicial complexes from a given simplicial complex introduced in \cite{BBCG10}. We briefly present the construction here. Let $K$ be a simplicial complex of dimension $n-1$ on vertices $V=[m] = \{1,2, \ldots, m\}$. A subset $\tau \subset V$ is called a \emph{non-face} of $K$ if it is not a face of $K$. A non-face $\tau$ is \emph{minimal} if any proper subset of $\tau$ is a face of $K$. Note that a simplicial complex is determined by its minimal non-faces.

    In the setting above, let $J=(j_1, \ldots, j_m)$ be a vector of positive integers. Denote by $K(J)$ the simplicial complex on vertices
    \[
        \{ \underbrace{1_1,1_2,\ldots,1_{j_1}},\underbrace{{2_1},2_2,\ldots,{2_{j_2}}},\ldots, \underbrace{{m_1},\ldots,{m_{j_m}}} \}
    \]
    with minimal non-faces
    \[
        \{ \underbrace{{(i_1)_1},\ldots,{(i_1)_{j_{i_1}}}},\underbrace{{(i_2)_1},\ldots,{(i_2)_{j_{i_2}}}},\ldots, \underbrace{{(i_k)_1},\ldots,{(i_k)_{j_{i_k}}}} \}
    \]
    for each minimal non-face $\{{i_1},\ldots,{i_k}\}$ of $K$.

    There is another way to construct $K(J)$ called the \emph{simplicial wedge construction}. Recall that for a face $\sigma$ of a simplicial complex $K$, the \emph{link} of $\sigma$ in $K$ is the subcomplex
    \[
         \link_K\sigma := \{ \tau \in K \mid \sigma\cup\tau\in K,\;\sigma\cap\tau=\varnothing\}
    \]
    and the \emph{join} of two disjoint simplicial complexes $K_1$ and $K_2$ is defined by
    \[
        K_1 \star K_2 = \{ \sigma_1 \cup \sigma_2 \mid \sigma_1 \in K_1,\; \sigma_2 \in K_2\}.
    \]
    Let $K$ be a simplicial complex with vertex set $[m]$ and fix a vertex $i$ in $K$. Consider a 1-simplex $I$ whose vertices are ${i_1}$ and ${i_2}$ and denote by $\partial I = \{i_1,\,i_2\}$ the 0-skeleton of $I$. Now, let us define a new simplicial complex on $m+1$ vertices, called the \emph{(simplicial) wedge} of $K$ at $i$, denoted by $\wed_{i}(K)$, by
    \[ \wed_{i}(K)= (I \star \link_K\{i\}) \cup (\partial I \star (K\setminus\{i\})), \]
    where $K\setminus\{i\}$ is the induced subcomplex with $m-1$ vertices except $i$. The operation itself is called the \emph{simplicial wedge operation} or the \emph{(simplicial) wedging}. See Figure~\ref{fig:wedge}.

%\begin{figure}[h]
%        \psset{unit=15pt}
%        \begin{pspicture}(-12,-1)(8,4.7)
%        \pspolygon[linewidth=1.5pt](-9.8,0.6)(-4,2.5)(-6.5,4)(-10.5,4)(-12,2.5)
%        \pspolygon[linewidth=1.5pt](2.2,0)(8,3)(5.5,4.5)(1.5,4.5)(0,3)
%        \psline[linewidth=1.5pt](0,3)(2.2,2.2)(8,3)
%        \psline[linewidth=1.5pt](1.5,4.5)(2.2,2.2)(5.5,4.5)
%        \psline[linewidth=1.5pt](2.2,2.2)(2.2,0)
%        \psline[linestyle=dashed](1.5,4.5)(2.2,0)(5.5,4.5)
%        \rput(-2,2.2){$\longrightarrow$}
%        \rput[tr](-9.8,0.6){$1$}
%        \rput[tl](-4,2.5){$2$}
%        \rput[bl](-6.5,4){$3$}
%        \rput[br](-10.5,4){$4$}
%        \rput[tr](-12,2.5){$5$}
%        \rput[tr](2.2,0){${1_1}$}
%        \rput[bl](2,2.7){${1_2}$}
%        \rput[tl](8,3){$2$}
%        \rput[bl](5.5,4.5){$3$}
%        \rput[br](1.5,4.5){$4$}
%        \rput[tr](0,3){$5$}
%        \rput(-8,-1){$K$}
%        \rput(4,-1){$\wed_{1}(K)$}
%        \end{pspicture}
%  \caption{Illustration of a wedge of $K$}\label{fig:wedge}
%\end{figure}

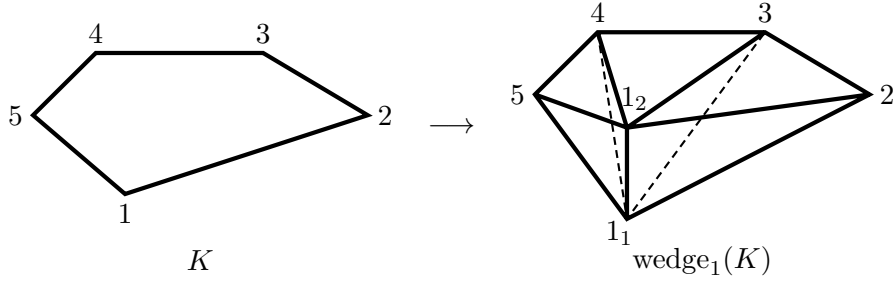
\begin{figure}[h]
    \begin{tikzpicture}[scale=.55]
        \coordinate [label=below:$1$](11) at (-9.8,0.6);
        \coordinate [label=right:$2$](22) at (-4,2.5);
        \coordinate [label=above:$3$](33) at (-6.5,4);
        \coordinate [label=above:$4$](44) at (-10.5,4);
        \coordinate [label=left:$5$](55) at (-12,2.5);
        \draw (-2,2.2) node {$\longrightarrow$};
        \coordinate [label={[xshift=-2.8pt,yshift=2.8pt]below:$1_1$}](1_1) at (2.2,0);
        \coordinate [label={[xshift=2.8pt,yshift=2.8pt]above:$1_2$}](1_2) at (2.2,2.2);
        \coordinate [label=right:$2$](2) at (8,3);
        \coordinate [label=above:$3$](3) at (5.5,4.5);
        \coordinate [label=above:$4$](4) at (1.5,4.5);
        \coordinate [label=left:$5$](5) at (0,3);
        \draw (-8,-1) node {$K$}
            (4,-1) node{$\wed_1(K)$};
        \draw [ultra thick] (11)--(22)--(33)--(44)--(55)--cycle
            (1_1)--(2)--(3)--(4)--(5)--cycle
            (1_2)--(1_1)
            (1_2)--(2)
            (1_2)--(3)
            (1_2)--(4)
            (1_2)--(5);
        \draw [thick, densely dashed] (4)--(1_1)--(3);
    \end{tikzpicture}
    \caption{Illustration of a wedge of $K$}\label{fig:wedge}
\end{figure}

     It is an easy observation to show that $\wed_{i}(K)=K(J)$ where $J=(1,\ldots,1,2,1,\ldots,1)$ is the $m$-tuple with 2 as the $i$-th entry. By consecutive application of this construction starting from $J=(1,\ldots,1)$, we can produce $K(J)$ for any $J$. Although there is some ambiguity to proceed from $J=(j_1,\ldots, j_m)$ to $J'=(j_1,\ldots,j_{i-1}, j_i+1,j_{i+1},\ldots,j_m)$ if $j_i\ge 2$, we have no problem since any choice of the vertex yields the same minimal non-faces of the resulting complex $\wed_v(K(J))=K(J')$ keeping in mind the original definition of $K(J)$. In conclusion, one can obtain a simplicial complex $K(J)$ by successive simplicial wedge constructions starting from $K$, independent of order of wedgings.

    Related to the simplicial wedging, we recall some hierarchy of simplicial complexes. Among simplicial complexes, simplicial spheres form a very important subclass.
    \begin{definition}\label{def:cpxhierarchy}
        Let $K$ be a simplicial complex of dimension $n-1$.
        \begin{enumerate}
            \item $K$ is called a \emph{simplicial sphere} of dimension $n-1$ if its geometric realization $|K|$ is homeomorphic to a sphere $S^{n-1}$.
            \item $K$ is called \emph{star-shaped in $p$} if there is an embedding of $|K|$ into $\R^{n}$ and a point $p\in \R^n$ such that any ray from $p$ intersects $|K|$ once and only once. %The point $p$ is said to be an element of the \emph{kernel} of $|K|$.
                The geometric realization $|K|$ itself is also called star-shaped.
            \item $K$ is said to be \emph{polytopal} if there is an embedding of $|K|$ into $\R^{n}$ which is the boundary of a simplicial $n$-polytope $P^\ast$.
        \end{enumerate}
    \end{definition}
    We have a chain of inclusions
    \begin{multline*}
        \text{simplicial complexes}\supset\text{simplicial spheres} \\
        \supset\text{star-shaped complexes} \supset\text{polytopal complexes}.
    \end{multline*}

    It is worthwhile to observe that each category of simplicial complexes above is closed under the wedge operation as follows.

    \begin{proposition}\label{prop:propofwedge}
        Let $K$ be a simplicial complex and $v$ its vertex. Then the followings hold:
        \begin{enumerate}
            \item {If $K$ is a simplicial sphere, then so is $\wed_v(K)$. \label{stm:sphere}}
            \item $\wed_v(K)$ is star-shaped if and only if  so is  $K$. \label{stm:star}
            \item $\wed_v(K)$ is polytopal if and only if so is $K$. \label{stm:polytopal}
        \end{enumerate}
    \end{proposition}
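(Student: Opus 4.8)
The plan is to base all three statements on two facts: that $K$ is recovered from $\wed_v(K)$ as a vertex link, and that a geometric realization of $K$ can be ``thickened'' to one of $\wed_v(K)$. The first ingredient is the purely combinatorial identity
\[
    \link_{\wed_v(K)}(v_1)\cong K
\]
(and symmetrically for $v_2$). Starting from $\wed_v(K)=(I\star\link_K\{v\})\cup(\partial I\star(K\setminus\{v\}))$ with $\partial I=\{v_1,v_2\}$, I would check which faces of $\wed_v(K)$ contain $v_1$; this yields $\link_{\wed_v(K)}(v_1)=(\{v_2\}\star\link_K\{v\})\cup(K\setminus\{v\})$, which, after renaming $v_2$ as $v$, is the union of the closed star and the antistar of $v$ in $K$, hence equals $K$. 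This step is short and purely formal.

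For part~(\ref{stm:sphere}) I would argue topologically. Put $A=I\star\link_K\{v\}$ and $B=\partial I\star(K\setminus\{v\})$, so $\wed_v(K)=A\cup B$, and the same bookkeeping gives $A\cap B=\partial I\star\link_K\{v\}$. If $K$ is a simplicial sphere of dimension $n-1$, then $\link_K\{v\}$ is an $(n-2)$-sphere and $K\setminus\{v\}$ is an $(n-1)$-ball whose boundary is $\link_K\{v\}$; hence $|A|\cong D^1\star S^{n-2}\cong D^n$ and $|B|\cong S^0\star D^{n-1}\cong D^n$, while $|A\cap B|\cong S^0\star S^{n-2}\cong S^{n-1}$ is all of $\partial|A|$ and all of $\partial|B|$. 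Since gluing two $n$-balls along their boundary spheres produces $S^n$, it follows that $\wed_v(K)$ is an $n$-sphere.

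For parts~(\ref{stm:star}) and~(\ref{stm:polytopal}) I would prove the two implications separately. For the ``if'' directions one realizes $\wed_v(K)$ explicitly: if $K=\partial P^{\ast}$, with $P$ the dual simple polytope and $F$ the facet of $P$ corresponding to $v$, then the polytopal wedge of $P$ along $F$ --- the simple polytope $\{(x,y)\mid x\in P,\ 0\le y\le\ell_F(x)\}$ for an affine functional $\ell_F\ge 0$ vanishing exactly on $F$ --- has dual boundary complex $\wed_v(K)$ (this is classical; cf.\ \cite{PB80}); and if $K$ is star-shaped, then realizing $K$ as the underlying complex of a complete simplicial fan and applying Ewald's canonical extension \cite{Ewa86} yields a complete simplicial fan with underlying complex $\wed_v(K)$. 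For the ``only if'' directions, the identity above gives $K\cong\link_{\wed_v(K)}(v_1)$; in the polytopal case this exhibits $K$ as the boundary complex of a vertex figure of a simplicial polytope, and in the star-shaped case as the underlying complex of the quotient fan of a complete simplicial fan by the ray corresponding to $v_1$, so in either case $K$ inherits the property.

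The bulk of the work is routine face-level bookkeeping: the link identity, the verification that the polytopal wedge really has $\wed_v(K)$ as its dual complex, and the statements that vertex figures and quotient-by-a-ray fans carry the expected complexes. The point subtler than it looks is part~(\ref{stm:sphere}) under the \emph{topological} definition of a simplicial sphere: a vertex link of a topological sphere need only be a homology sphere, so $I\star\link_K\{v\}\cong D^n$ and $\partial I\star(K\setminus\{v\})\cong D^n$ are not formal and rest on the double suspension theorem together with the identity $\operatorname{Cone}(X)\times\operatorname{Cone}(Y)\cong\operatorname{Cone}(X\star Y)$. This does not arise when $K$ is star-shaped or polytopal, which covers all the applications in this paper.
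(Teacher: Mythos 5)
Your treatment of parts (2) and (3) is essentially sound and close to the paper's: the ``only if'' directions via the identity $\link_{\wed_v(K)}\{v_1\}\cong K$, together with the facts that vertex links of star-shaped (resp.\ polytopal) spheres are star-shaped (resp.\ are boundary complexes of vertex figures), are exactly the paper's route, and the ``if'' direction of (3) via the polytopal wedge is the paper's Lemma~\ref{lem:wedge}. The one real divergence is the ``if'' direction of (2): you invoke Ewald's canonical extension of a complete fan, which is correct but imports a nontrivial external result (that the extension is again a genuine, non-overlapping fan is only established inside this paper in Proposition~\ref{prop:FanandWedge}, well after this point), whereas the paper gives a short self-contained construction, realizing $\wed_v(K)$ as the geometric join $A\star_G|K|$ with $A=\{(v,1),(v,-1)\}\subset\R^{n+1}$ and verifying the ray condition directly. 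Either route works; the paper's buys independence from the fan machinery, yours buys brevity at the cost of a forward (or external) reference.

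Part (1) is where you have a genuine gap. As you yourself concede at the end, for a topological (non-PL) simplicial sphere the assertions ``$\link_K\{v\}$ is an $(n-2)$-sphere'' and ``$K\setminus\{v\}$ is an $(n-1)$-ball'' are false: in the double-suspension triangulations of $S^{n-1}$ the link of a suspension vertex is a non-manifold and the antistar fails to be a manifold with boundary. Your closing remark that the two pieces are nevertheless balls ``by the double suspension theorem and $\operatorname{Cone}(X)\times\operatorname{Cone}(Y)\cong\operatorname{Cone}(X\star Y)$'' is not a proof: those two facts let you identify, say, $|I\star\link_K\{v\}|$ with $I\times\operatorname{Cone}(|\link_K\{v\}|)$ and show its interior is $\R^n$, but concluding that it is $D^n$ still requires a recognition theorem for topological balls and a check that the boundary is locally collared, none of which is written down. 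The paper's argument makes the whole issue evaporate: since $\partial I\star\overline{\operatorname{St}_K\{v\}}=\partial I\star\{pt\}\star\link_K\{v\}$ is a subdivision of $I\star\link_K\{v\}$, one gets $|\wed_v(K)|\cong|\partial I\star K|$, the suspension of $|K|$, and the suspension of a sphere is a sphere. Your decomposition $\wed_v(K)=A\cup B$ with $A\cap B=\partial I\star\link_K\{v\}$ is correct bookkeeping, but it leads into genuinely delicate point-set topology that the one-line subdivision argument avoids entirely; you should replace it.
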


    \begin{proof}
        To prove \eqref{stm:sphere}, we recall the definition of $\wed_v(K)$:
        \[
            \wed_v(K)= (I \star \link_K\{v\}) \cup (\partial I \star (K\setminus\{v\})).
        \]
        Observe that the join $\partial I\star K$, or the suspension of $K$, is
        \[
            \partial I\star K = (\partial I \star \overline{\operatorname{St}_K\{v\}}) \cup (\partial I \star (K\setminus\{v\}),
        \]
        where $\overline{\operatorname{St}_K\{v\}}$ means the closed star of $v$. Observe that
        \[
             \partial I \star \overline{\operatorname{St}_K\{v\}} = \partial I \star \{pt\} \star \link_K\{v\}
        \]
        is a subdivision of $I \star \link_K\{v\}$ and therefore the geometric realizations $|\wed_v(K)|$ and $|\partial I\star K|$ are homeomorphic. {But, the suspension of a sphere is again a sphere}, so it is done.

        Next, we are going to show \eqref{stm:star}. The `only if' part  follows from \cite[Section~2]{HM03} (or see Lemma~\ref{lem:projection}). For `if' part, suppose that $K$ is a star-shaped sphere of dimension $n-1$. In other words, there is an embedding $|K|$ into $\R^n$ such that the origin is in the kernel of $|K|$. Fix a vertex $v$ of $|K|$. We regard $\R^n$ as a hyperplane of $\R^{n+1}$ so that $\R^{n+1}=\{(p,x)\mid p\in\R^n,\;x\in\R\}$. Put $A:=\{(v,1),(v,-1)\}\subset\R^{n+1}$ and consider the geometric join
        \[
            A\star_G |K| := \{(1-t)a + tp \mid 0\le t\le 1,\; a\in A,\; p\in |K|\}.
        \]
        We claim that $A\star_G |K|$ is the wanted geometric realization of $\wed_v(K)$ which is star-shaped. The fact that $A\star_G |K|$ has the face structure same as $\wed_v(K)$ can be shown easily since the vertex $(v,0)$ is in the convex hull of $A$ --- recall that $\wed_v(K)$ is subdivided to $\partial I\star K$ by adding a vertex into the edge $I$.
        Therefore we remain to show that $A\star_G |K|$ is star-shaped in the origin $O$. Consider a ray $R$ starting from $O$. It must intersect $A\star_G |K|$ and thus we are enough to show the uniqueness of the intersection. Suppose
        \[
            (1-t)a + tp \in R\cap (A\star_G |K|)
        \]
        for some $a\in A$, $p\in |K|$, and $0\le t\le 1$. The ray $R$ is given by $R = \{s(1-t)a + stp\mid s\ge 0 \}.$
        Suppose that
        \[
            s(1-t)a + stp = (1-t')b + t'p'
        \]
        for some $0\le t'\le 1$, $b\in A$, and $p'\in |K|$. There are two cases:
        \begin{itemize}
            \item $t=1$. Since $\R^{n+1} = \langle a\rangle\oplus \R^n$, one has $t'=1$ and $sp=p'$. The star-shapedness of $|K|$ implies $s=1$.
            \item $t\ne 1$. Similarly, one has $t'\ne 1$ and $b=a$. A property of direct sums says that $s(1-t) = 1-t'$ and $stp = t'p'$. Hence $p=p'$, $t=t'$ and $s=s'$.
        \end{itemize}
        In conclusion, we have proven that $R$ intersects $A\star_G |K|$ exactly once.

        The proof of  \eqref{stm:polytopal} could be done similarly to \eqref{stm:star}, but we introduce a different approach depicted below.
    \end{proof}

    When $K$ is polytopal, we often regard $K$ as the boundary complex of a simple polytope $P$. To be more precise, let $K$ be the boundary of a simplicial polytope $Q$. Then the dual polytope to $Q$ is a simple polytope $P$. Recall that an $n$-dimensional polytope $P$ is called \emph{simple} if exactly $n$ facets (or codimension 1 faces) intersect at each vertex of $P$. We follow \cite{KW67} to define the notion of the (polytopal) wedge.
    Let $P \subseteq \R^n$ be a polytope of dimension $n$ and $F$ a face of $P$. Consider a polyhedron $P\times [0,\infty)\subseteq \R^{n+1}$ and identify $P$ with $P\times \{0\}$. Pick a hyperplane $H$ in $\R^{n+1}$ so that $H\cap P = F$ and $H$ intersects the interior of $P\times [0,\infty)$. Then $H$ cuts $P\times [0,\infty)$ into two parts. The part which contains $P$ is an $(n+1)$-polytope and it is combinatorially determined by $P$ and $F$, and it is called the \emph{(polytopal) wedge of $P$ at $F$} and denoted by $\wed_F(P)$. Note that $\wed_F(P)$ is simple if $P$ is simple and $F$ is a facet of $P$. See Figure~\ref{fig:wedge_simple}.

%\begin{figure}[h]
%        \psset{unit=18pt}
%        \begin{pspicture}(-10,-1)(6,5)
% %               \pspolygon(0,0)(0,6)(6,6)(6,0)
%                \pspolygon[linewidth=1.5pt](-7,0)(-4,1.5)(-6,3.2 )(-8.8,3.2 )(-10,2)
%   %             \rput(-7,1.7){\large{$P$}}
%                \rput(-9,0.55){\large{$F$}}
%                \rput(-2,2.5){$\longrightarrow$}
% %               \pspolygon(0,0)(0,6)(6,6)(6,0)
%                \pspolygon[linestyle=dashed](3,0)(6,1.5)(4,3.2 )(1.2,3.2 )(0,2)
%                \pspolygon[linewidth=1.5pt](3,0)(6,3.0)(4,5.0 )(1.2,4.7 )(0,2)
%                \psline[linewidth=1.5pt](6,1.5)(6,3.0)
%                \psline[linestyle=dashed](4,3.2)(4,5.0)
%                \psline[linestyle=dashed](1.2,3.2)(1.2,4.7)
%                \psline[linewidth=1.5pt](3,0)(6,1.5)
%        \rput(-7,-1){$P$}
%        \rput(3,-1){$\wed_{F}(P)$}
%        \end{pspicture}
%  \caption{Illustration of a wedge of $P$}\label{fig:wedge_simplet}
%\end{figure}

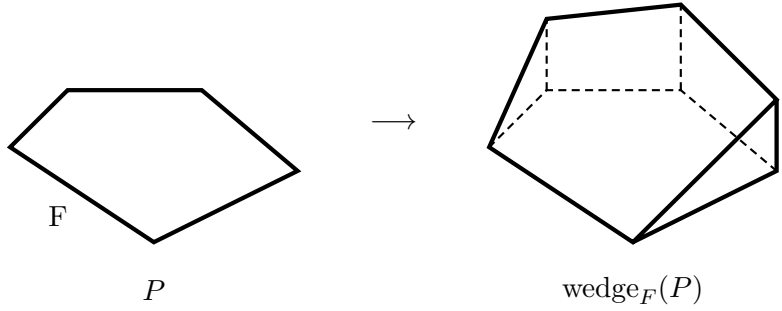
\begin{figure}[h]
    \begin{tikzpicture}[scale=.63]
        \draw [ultra thick]
            (-7,0)--(-4,1.5)--(-6,3.2 )--(-8.8,3.2 )--(-10,2)--cycle
            (3,0)--(6,3.0)--(4,5.0 )--(1.2,4.7 )--(0,2)--cycle
            (3,0)--(6,1.5)--(6,3.0);
        \draw [thick, densely dashed]
            (3,0)--(6,1.5)--(4,3.2 )--(1.2,3.2 )--(0,2)--cycle
            (4,3.2)--(4,5.0)
            (1.2,3.2)--(1.2,4.7);
        \draw (-7,-1) node {$P$}
            (3,-1) node {$\wed_{F}(P)$}
            (-9,0.55) node {F}
            (-2,2.5) node {$\longrightarrow$};
    \end{tikzpicture}
    \caption{Illustration of a wedge of $P$}\label{fig:wedge_simple}
\end{figure}

    The next lemma is due to \cite{PB80}.
%    Recall that a simple polytope $P\in\R^n$ is a \emph{Delzant polytope} if at each vertex $p$ of $P$, the outward normal vectors of facets meeting at $p$ forms a basis for $\Z^n$. From the construction of polytopal wedge, it is easy to see $P$ is Delzant if and only if $\wed_F(P)$ is Delzant.

    \begin{lemma}\label{lem:wedge}
        Assume that $P$ is a simple polytope and $F$ is a facet of $P$. Then the boundary complex of $\wed_F(P)$ is the same as the simplicial wedge of the boundary complex of $P$ at $F$.
    \end{lemma}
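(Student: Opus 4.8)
The statement to prove is Lemma~\ref{lem:wedge}: for a simple polytope $P$ and a facet $F$, the boundary complex of the polytopal wedge $\wed_F(P)$ coincides with the simplicial wedge $\wed_v(K)$ of the boundary complex $K$ of $P$ taken at the vertex $v$ of $K$ dual to $F$. Since both objects are honest simplicial complexes (the boundary of a simplicial polytope on one side, and the combinatorially-defined wedge on the other), it suffices to exhibit a bijection between vertices that carries faces to faces in both directions; equivalently, since a simplicial complex is determined by its minimal non-faces, I would match up the minimal non-faces. I would work on the \emph{simple} side throughout, describing faces of $\wed_F(P)$ as intersections of facets, and only dualize at the very end.

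\textbf{Step 1: identify the facets of $\wed_F(P)$.} Writing $P\times[0,\infty)\subseteq\R^{n+1}$ with $P$ identified with $P\times\{0\}$, and $H$ the cutting hyperplane with $H\cap P=F$, the polytope $\wed_F(P)$ is the bounded piece containing $P$. Its facets are of three kinds: (a) the ``bottom'' copy $P\times\{0\}$ itself; (b) the ``slanted top'' facet $H\cap(P\times[0,\infty))$, which is combinatorially a copy of $P$ (it projects isomorphically onto $P$); and (c) for each facet $G\ne F$ of $P$, the prism-like facet $G\times[0,\infty)$ truncated by $H$, which is combinatorially $\wed_{G\cap F}(G)$. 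So if $P$ has facets $F=F_0,F_1,\dots,F_k$, then $\wed_F(P)$ has $k+2$ facets: name them $F_1',\dots,F_k'$ (the truncated prisms over $F_1,\dots,F_k$), together with two facets $F_0^{(1)}=P\times\{0\}$ and $F_0^{(2)}=H\cap(P\times[0,\infty))$. This already matches the vertex count on the simplicial side: $\wed_v(K)$ has $m+1$ vertices, $v_1$ and $v_2$ replacing $v$.

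\textbf{Step 2: describe how these facets meet.} The key combinatorial facts to verify, all elementary from the geometry of $P\times[0,\infty)$ cut by $H$, are: (i) $F_0^{(1)}$ and $F_0^{(2)}$ are disjoint (they meet only ``at infinity'' before truncation, i.e.\ not in $\wed_F(P)$) — this is exactly the statement that $\{v_1,v_2\}$ is a minimal non-face; (ii) for $i\ge 1$, $F_i'$ meets $F_0^{(1)}$ in (a copy of) $F_i\cap F_0=F_i\cap F$ and meets $F_0^{(2)}$ likewise, and for $i,j\ge1$ the intersection $F_i'\cap F_j'$ is the truncated prism over $F_i\cap F_j$, nonempty iff $F_i\cap F_j\ne\varnothing$; and (iii) more generally a collection $\{F_{i_1}',\dots,F_{i_r}'\}$ (all indices $\ge1$) together with possibly one of $F_0^{(1)},F_0^{(2)}$ has nonempty intersection in $\wed_F(P)$ iff the corresponding facets of $P$ have nonempty intersection. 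These are the defining relations of the simplicial wedge: the minimal non-faces of $\wed_v(K)$ are $\{v_1,v_2\}$ together with, for each minimal non-face $\tau$ of $K$, the set obtained from $\tau$ by replacing $v$ with $\{v_1,v_2\}$ if $v\in\tau$ (and $\tau$ itself if $v\notin\tau$).

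\textbf{Step 3: dualize and conclude.} Translate Step 2 into the language of the dual simplicial complexes. Under the correspondence ``facet $\leftrightarrow$ vertex, intersection of facets $\leftrightarrow$ face'', condition (i) says $\{v_1,v_2\}\notin\wed_v(K)$, and conditions (ii)--(iii) say precisely that a subset $S$ of the $m+1$ vertices is a face of $\partial(\wed_F(P))^\ast$ iff it contains neither $\{v_1,v_2\}$ nor (the wedge of) any minimal non-face of $K$. Comparing with the definition $\wed_v(K)=(I\star\link_K\{v\})\cup(\partial I\star(K\setminus\{v\}))$ — or, equivalently, with the minimal-non-face description $\wed_v(K)=K(J)$ for $J=(1,\dots,1,2,1,\dots,1)$ recalled in Section~\ref{sec:wedge} — gives the asserted equality of complexes.

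\textbf{Expected main obstacle.} The one place needing genuine care is Step 1(c)/Step 2: pinning down the combinatorial type of each truncated prism facet $G\times[0,\infty)\cap\{$below $H\}$ and, in particular, checking that the truncation by $H$ does \emph{not} create any new faces among these facets beyond what the prism structure already gives — i.e.\ that $H$ meets the prism over $G$ exactly along the prism over $G\cap F$ and introduces no extra incidences. This is where one genuinely uses that $H\cap P=F$ is a \emph{facet} (so $H$ is ``transverse enough''), that $P$ is \emph{simple} (so every face of $P$ is an intersection of a well-controlled number of facets and these incidences propagate cleanly through the product with $[0,\infty)$), and that $H$ meets the interior of $P\times[0,\infty)$. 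Once this transversality/simpleness bookkeeping is in place, the rest is the routine matching of minimal non-faces described above; indeed this is precisely the content of \cite{PB80}, so I would cite it for the detailed verification and present the argument at the level of the facet-incidence dictionary above.
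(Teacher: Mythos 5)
Your overall strategy --- list the facets of $\wed_F(P)$, record their incidences, dualize, and match against the definition of the simplicial wedge --- is the same as the paper's, and your Step 1, your claims (ii)--(iii), and the transversality points you flag are all sound. However, claim (i) of Step 2, together with the combinatorial statement you pair it with, is false, and this is a genuine error rather than a cosmetic one. Geometrically, the two new facets $F_0^{(1)}=P\times\{0\}$ and $F_0^{(2)}=H\cap(P\times[0,\infty))$ are \emph{not} disjoint: since the cutting hyperplane satisfies $H\cap P=F$, both facets contain $F\times\{0\}$, and in fact they meet exactly along $F\times\{0\}$, a codimension-two face of $\wed_F(P)$. (This is precisely how the paper identifies them: they are ``the two facets containing $F\times\{0\}$.'') Correspondingly, $\{v_1,v_2\}$ is a \emph{face} (an edge) of $\wed_v(K)$, not a minimal non-face: the summand $I\star\link_K\{v\}$ in the definition contains the whole $1$-simplex $I=\{v_1,v_2\}$, and in the $K(J)$ description the minimal non-faces of $\wed_v(K)$ are only the ``duplicated'' minimal non-faces of $K$; no extra non-face $\{v_1,v_2\}$ is added, because $\{v\}$ is a face of $K$. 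The complex you describe in Step 3, in which $\{v_1,v_2\}$ is forbidden, is genuinely different from $\wed_v(K)$: it is missing exactly the faces $\{v_1,v_2\}\cup\tau$ with $\tau\in\link_K\{v\}$, which are the new faces that wedging is supposed to create.

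The two mistakes are ``compatible'' --- you assert that $\{v_1,v_2\}$ is absent on both the geometric and the combinatorial side --- so you would still arrive at the stated equality of complexes, but via two false intermediate assertions; as written the argument does not prove the lemma. The repair is exactly the paper's Case II: for a subset $I$ of facet labels containing both $1_1$ and $1_2$, one uses $F'_{1_1}\cap F'_{1_2}=F\times\{0\}$ to see that $\bigcap_{i\in I}F'_i\neq\varnothing$ if and only if $F\cap\bigcap_{i\in I\setminus\{1_1,1_2\}}F_i\neq\varnothing$, i.e.\ if and only if $I\setminus\{1_1,1_2\}\in\link_K\{v\}$; this reproduces the summand $I\star\link_K\{v\}$ of the wedge. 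With that correction, your Cases I and III go through as stated and coincide with the paper's proof.
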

    \begin{proof}
        Let $\cF=\{F_1,\ldots,F_m\}$ be the set of facets of $P$ and put $F=F_1$. Note that $\wed_F(P)$ has $m+1$ facets. Two of them contain $F\times \{0\}$ and we label them as $F'_{1_1}$ and $F'_{1_2}$, respectively. Each facet except $F'_{1_1}$ and $F'_{1_2}$ is a subset of $F_i\times[0,\infty)$ for some $1<i\le m$, which is labeled as $F'_i$. Thus the new facet set is $\cF'=\{F'_{1_1},F'_{1_2},F'_2,\ldots, F'_m\}$. The boundary complex of $\wed_F(P)$ is a simplicial complex whose vertex set is $\{1_1,1_2,2,\ldots,m\}$. Now consider a set $I\subseteq\{1_1,1_2,2,\ldots,m\}$ and check whether the intersection $\bigcap_{i\in I}F'_i\subset \wed_F(P)$ is nonempty. There are three cases:
        \begin{itemize}
            \item Case I: $I$ contains neither $1_1$ nor $1_2$. In this case, $\bigcap_{i\in I}F'_i$ is nonempty in $\wed_F(P)$ if and only if $\bigcap_{i\in I}F_i$ is nonempty in $P$.
            \item Case II: $I$ contains both $1_1$ and $1_2$. Since $F'_{1_1}\cap F'_{1_2} = F_1\times\{0\}$, $\bigcap_{i\in I}F'_i$ is nonempty if and only if $\bigcap_{i\in I\setminus\{1_1,1_2\}}F_i$ is nonempty. It is equivalent to that $I\setminus\{1_1,1_2\}$ is a subset of the link of $1$ in the boundary complex of $P$.
            \item Case III: $I$ contains exactly one of ${1_1}$ or ${1_2}$. Assume that ${1_1}\in I$. In this case, $\bigcap_{i\in I}F'_i$ is nonempty in $\wed_F(P)$ if and only if $\bigcap_{i\in I\setminus \{1_1\}}F_i$ is nonempty in $P$. Therefore, this case coincides with $\partial I \star (K\setminus\{F_1\})$ where $K$ is the boundary complex of $P$.
        \end{itemize}
        The proof is completed putting all cases together. Note that if the intersection is a vertex, then it is Case II or case III, since every vertex of $\wed_F(P)$ is in $F'_{1_1}$ or $F'_{1_2}$.
    \end{proof}
    Suppose $P$ is a simple polytope and $\cF=\{F_1,\ldots,F_m\}$ is the set of facets of $P$. Let $J=(a_1,\ldots,a_m)\in\N^m$ be a vector of positive integers. Then define $P(J)$ by the combinatorial polytope obtained by consecutive polytopal wedgings analogous to the construction of $K(J)$ with simplicial wedgings. Lemma~\ref{lem:wedge} guarantees that if the boundary complex of $P$ is $K$, then the boundary complex of $P(J)$ is $K(J)$.

    {
    \begin{remark}
        The converse of \eqref{stm:sphere} in Proposition~\ref{prop:propofwedge} does not hold because of the famous Double Suspension Theorem of Edwards and Cannon \cite{Ca79} which states that every double suspension $SSM$ of a homology $n$-sphere $M$ is homeomorphic to an $(n+2)$-sphere.
    \end{remark}
    }

\section{Toric topology and combinatorial objects}\label{sec:torictopology}

    A fundamental result of toric geometry is that there is a bijection between toric varieties of dimension $n$ and (rational) fans of real dimension $n$. One could regard a fan as a ``combinatorial'' object associated to a toric variety. Similarly, objects in toric topology such as topological toric manifolds, quasitoric manifolds, and torus manifolds, have their associated combinatorial objects respectively. In this section, we briefly introduce them focusing on combinatorial objects.

    \subsection{Toric varieties and fans}

    Let us review the definition of a fan. For a subset $X \subset \R^n$, the \emph{positive hull} of $X$, denoted by $\pos X$, is the set of positive linear combinations of $X$, that is,
    \[
        \pos X = \left\{ \sum_{i=1}^{k}a_ix_i\mid a_i\ge 0,\:x_i\in X\right\}.
    \]

    By convention, we put $\pos X = \{0\}$ if $X$ is empty. A subset $C$ of $\R^n$ is called a \emph{polyhedral cone}, or simply a \emph{cone}, if there is a finite set $X$ of vectors, called the \emph{set of generators} of the cone, such that $C = \pos X$. The elements of $X$ is called \emph{generators} of $C$. We also say that $X$ positively spans the cone $C$. A subset $D$ of $C$ is called a \emph{face} of $C$ if there is a hyperplane $H$ such that $C\cap H = D$ and $C$ does not lie in both sides of $H$. A cone is by convention a face of itself and all other faces are called \emph{proper}.

    A cone is called \emph{strongly convex} if it does not contain a nontrivial linear subspace. In this paper, every cone is assumed to be strongly convex. A polyhedral cone is called \emph{simplicial} if its generators are linearly independent, and \emph{rational} if every generator is in $\Z^n$. A rational cone is called \emph{non-singular} if its generators are unimodular, i.e., they are a part of an integral basis of $\Z^n$.

    A \emph{fan} $\Sigma$ of real dimension $n$ is a set of cones in $\R^n$ such that
    \begin{enumerate}
        \item if $C \in \Sigma$ and $D$ is a face of $C$, then $D \in\Sigma$,
        \item and for $C_1,\,C_2\in \Sigma$, $C_1 \cap C_2$ is a face of $C_1$ and $C_2$ respectively.
    \end{enumerate}
      A fan $\Sigma$ is said to be \emph{rational} (resp. \emph{simplicial} or \emph{non-singular}) if every cone in $\Sigma$ is rational (resp. simplicial or non-singular). Remark that the term ``fan'' is used for rational fans in most literature, especially among toric geometers. We will sometimes use the term ``real fan'' to emphasize that generators need not be integral vectors.

    If a fan $\Sigma$ is simplicial, then we can think of a simplicial complex $K$, called the \emph{underlying simplicial complex} of $\Sigma$, whose vertices are generators of cones of $\Sigma$ and whose faces are the sets of generators of cones in $\Sigma$ (including the empty set). We also say that $\Sigma$ is a fan over $K$. In this paper a fan is assumed to be simplicial {unless otherwise mentioned}.

    A fan $\Sigma$ is called \emph{complete} if the union of cones in $\Sigma$ covers all of $\R^n$. Observe that the underlying simplicial complex of a fan is a simplicial sphere if and only if the fan is complete. It is a well-known fact that a rational fan is complete (resp. non-singular) if and only if its corresponding toric variety is compact (resp. smooth). A compact smooth toric variety is called a \emph{toric manifold} in this paper. We remark that a toric variety is an orbifold if and only if its corresponding fan is simplicial.

    We close this subsection giving definition of two notions relating a fan to a polytope. A fan is said to be \emph{weakly polytopal} if its underlying simplicial complex is polytopal in the sense of Definition~\ref{def:cpxhierarchy}. A fan $\Sigma$ is called \emph{strongly polytopal} if there is a simplicial polytope $P^*$, called a \emph{spanning polytope}, such that $0\in\operatorname{int}P^*$ and
    \[
        \Sigma = \{\pos \sigma \mid \sigma\text{ is a proper face of $P^*$}\}.
    \]
    Observe that the underlying complex of $\Sigma$ is $\partial P^*$. Therefore strong polytopalness implies weak polytopalness.

    It is a well-known fact from convex geometry that a fan $\Sigma$ is strongly polytopal if and only if $\Sigma$ is the normal fan of a simple polytope $P$. For a given simple $n$-polytope $P\subset \R^n$, correspond to each facet $F$ the outward normal vector $N(F)$. The \emph{normal fan} $\Sigma$ of $P$ is a collection of cones
    \[
        \Sigma = \Bigl\{\pos \{N(F)\mid F\supset f\} \;\Bigm|\; f\text{ is a proper face of $P$}\Bigr\}.
    \]
    \begin{theorem}\cite[Theorem~V.4.4]{Ewa96}
        A fan $\Sigma$ is strongly polytopal whose spanning polytope is a simplicial polytope $P^*$ if and only if it is a normal fan of the dual polytope $P^{**}=P$.
    \end{theorem}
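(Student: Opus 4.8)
The plan is to run everything through polar (convex) duality and the face-lattice correspondence it induces. Since $0\in\operatorname{int}P^*$, the polar dual $P=(P^*)^*=\{y\in\R^n\mid \langle y,x\rangle\le 1\text{ for all }x\in P^*\}$ is again an $n$-polytope with $0$ in its interior, and $(P)^*=P^*$. I would first recall the standard inclusion-reversing bijection between the faces of $P^*$ and the faces of $P$: to a face $F$ of $P^*$ assign
\[
\widehat F=\{y\in P\mid \langle y,x\rangle=1\text{ for all }x\in F\},
\]
which is a face of $P$ with $\dim\widehat F=n-1-\dim F$, satisfying $F\subseteq G\iff\widehat F\supseteq\widehat G$ and $\widehat{\widehat F}=F$. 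In particular the vertices $v$ of $P^*$ correspond bijectively to the facets $\widehat v$ of $P$, and since $P=\{y\mid\langle y,v_i\rangle\le 1\}$ as $v_i$ runs over the vertices of $P^*$, the facet $\widehat v$ lies in the hyperplane $\{\langle y,v\rangle=1\}$. Because $\langle 0,v\rangle=0<1$, this inequality is the outward-pointing supporting inequality of $\widehat v$, and it is irredundant precisely because $v$ is a vertex (not merely a point) of $P^*$; hence $N(\widehat v)=v$ up to positive scaling, which does not affect positive hulls.

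Next I would compute the normal fan of $P$ directly and match it term by term with the strongly polytopal fan on $P^*$. Fix a proper face $f$ of $P$ and let $F$ be the unique face of $P^*$ with $\widehat F=f$. A facet $F'$ of $P$ satisfies $F'\supseteq f=\widehat F$ if and only if $\widehat{F'}\subseteq F$, i.e.\ if and only if the vertex $\widehat{F'}$ of $P^*$ is a vertex of $F$. Therefore the normal cone of $f$ equals
\[
\pos\{N(F')\mid F'\text{ a facet of }P,\ F'\supseteq f\}=\pos\{v\mid v\text{ a vertex of }P^*,\ v\in F\}=\pos F,
\]
using that the positive hull of a polytope is the positive hull of its vertex set. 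As $f$ ranges over the proper faces of $P$, the face $F$ ranges over the proper faces of $P^*$, so the normal fan of $P$ is exactly $\{\pos F\mid F\text{ a proper face of }P^*\}$ — which is by definition the strongly polytopal fan with spanning polytope $P^*$. This yields both implications simultaneously: $\Sigma$ is strongly polytopal with spanning polytope $P^*$ if and only if $\Sigma$ is the normal fan of $P=(P^*)^*$. Finally, $P^*$ is simplicial exactly when every $\pos F$ is a simplicial cone, equivalently when $P$ is simple, so the simpliciality hypothesis in the statement is internally consistent on both sides.

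The main obstacle — really the only nontrivial input — is the bijectivity of $F\mapsto\widehat F$ together with the dimension formula $\dim\widehat F=n-1-\dim F$; this is the precise content one imports from a convex-geometry reference such as \cite{Ewa96}. The other ingredients are routine: that $P=(P^*)^*$ is a polytope with $0$ in its interior and $(P)^*=P^*$; that $N(\widehat v)=v$, for which the only point needing care is irredundancy of $\langle y,v\rangle\le1$, guaranteed by $v$ being a vertex of $P^*$; and $\pos(\text{polytope})=\pos(\text{its vertices})$. Once these are in place, the fan axioms for the normal fan of $P$ follow formally from the face-lattice structure of $P^*$ (closure under faces from $F'\subseteq F\Rightarrow \pos F'$ is a face of $\pos F$, and the intersection condition from $\pos(F\cap G)=\pos F\cap\pos G$ for faces of $P^*$), so no case analysis is required beyond the duality dictionary set up in the first step.
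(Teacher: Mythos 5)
This statement is quoted in the paper directly from Ewald \cite[Theorem~V.4.4]{Ewa96} with no proof given, so there is nothing internal to compare against; judged on its own, your argument is correct and is the standard polar-duality proof (essentially the one in the cited reference): the conjugate-face anti-isomorphism $F\mapsto\widehat F$ identifies the normal cone of $\widehat F\subseteq P$ with $\pos F$ for $F$ a face of $P^*$, giving equality of the two fans cone by cone and hence both implications at once. The only slightly glib spot is the claim that $\pos(F\cap G)=\pos F\cap\pos G$ ``follows formally from the face-lattice structure''--- that identity is a geometric fact about cones over faces of a polytope with $0$ in its interior, not a purely lattice-theoretic one --- but it is not actually needed here, since once the cone-by-cone identification with the normal fan of $P$ is established, the fan axioms are inherited from the (standard) fact that the normal fan of a polytope is a fan.
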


    The toric variety corresponding to a (rational) strongly polytopal fan is known to be a projective algebraic variety, that is, a subvariety of a complex projective space. We call such a variety a \emph{projective toric variety}. If it is non-singular, then we call it a \emph{projective toric manifold}. We say that a simple polytope is a \emph{Delzant polytope} if its normal fan is non-singular. By definition, a simple $n$-polytope $P\subset \R^n$ is Delzant if and only if outward (integral and primitive) vectors of $P$ normal to facets form an integral basis of $\Z^n$ at each vertex of $P$. We have fundamental bijections
    \begin{multline*}
        \text{strongly polytopal non-singular fans}\Longleftrightarrow \text{Delzant polytopes} \\
        \Longleftrightarrow\text{projective toric manifolds}.
    \end{multline*}

    \subsection{Topological toric manifolds and characteristic maps}

    It is easy to see that every information of a rational fan $\Sigma$ of dimension $n$ can be recovered from the underlying simplicial complex $K=K(\Sigma)$ of dimension $\le n-1$ and a map $\lambda=\lambda(\Sigma)\colon V(K)\to \Z^n$, where $V(K)$ denotes the vertex set of $K$ and $\lambda$ maps a vertex of $K$ to the primitive integral vector corresponding to the 1-cone of $\Sigma$. We call the pair $(K(\Sigma),\lambda(\Sigma))$ the \emph{characteristic map} of $\Sigma$. More generally, we give the following definition of characteristic maps. Before that, recall that a simplicial sphere $K$ is said to be \emph{fan-like} if there is a complete fan over $K$. Observe that fan-likeness and star-shapedness of Definition~\ref{def:cpxhierarchy} are equivalent properties of simplicial spheres. (This is not true in general polyhedral spheres; see \cite{Ewa96} for an example.)

    \begin{definition}
        A \emph{characteristic map} of dimension $n$ (over $\Z$) is defined as the pair $(K,\lambda)$ of an abstract simplicial complex $K$ of dimension $\le n-1$ and a map $\lambda\colon V(K)\to \Z^n$ so that $\{\lambda(i)\mid i\in I\}$ is a linearly independent set over $\R$ for any face $I$ of $K$, where $V(K) =[m]$ is the vertex set of $K$. Moreover, we define the following:
        \begin{enumerate}
            \item {$(K,\lambda)$ is said to be \emph{primitive} if each $\lambda(i)$ is a primitive vector.}
            \item $(K,\lambda)$ is called \emph{complete} if $K$ is a fan-like simplicial sphere of dimension $n-1$.
            \item $(K,\lambda)$ is called \emph{non-singular} if for every face of $I\in K$, the set of vectors $\{\lambda(i)\mid i\in I\}$ positively spans a non-singular cone.
            \item {$(K,\lambda)$ is called \emph{fan-giving} if the set of cones
            \[
                \Bigl\{\pos\{\lambda(i)\mid i\in \sigma\}\Bigm| \sigma\in K\Bigr\}
            \]
            is a fan.}
         %   $(K,\lambda)$ is called \emph{fan-giving} if there is a fan $\Sigma$ such that $(K,\lambda)=(K(\Sigma),\lambda(\Sigma))$.
        \end{enumerate}
        Sometimes we call the map $\lambda$ itself a characteristic map. {Note that every non-singular characteristic map is primitive. In addition, a characteristic map is primitive and fan-giving if and only if there is a fan $\Sigma$ such that $(K,\lambda)=(K(\Sigma),\lambda(\Sigma))$.}
    \end{definition}
%    When $K$ has $m$ vertices, it is common to describe $\lambda$ by an $n\times m$ matrix whose columns are labeled by the vertices of $K$. For such a matrix, $\lambda(v)$ is defined as the $v$-th column vector. We call the matrix the \emph{characteristic matrix} and often abuse the notation $\lambda$ for the matrix.
%
%    Let $(K,\lambda_1)$ and $(K,\lambda_2)$ are two characteristic maps.

    One could think of the ``real'' version of a characteristic map so that the mapped vectors are in $\R^n$, not necessarily $\Z^n$. We still can define completeness and fan-givingness in that case.
    By definition, if a characteristic map $(K,\lambda)$ of dimension $n$ is complete and non-singular, then for any maximal face $I$ of $K$, the vectors $\{\lambda(i)\mid i\in I\}$ form an integral basis of $\Z^n$. Note that not every characteristic map defines a fan because overlapping cones may exist. We remark that the term ``characteristic function'' is used in some literatures including \cite{DJ91}, in the meaning of ``complete non-singular characteristic map on the boundary complex of a simple polytope $P$''. Note that a fan-giving characteristic map corresponds to a complete (resp. non-singular) fan if and only if the characteristic map is complete (resp. non-singular).

    A topological toric manifold introduced in \cite{IFM12} is a topological generalization of a toric manifold. A closed smooth manifold $M$ of dimension $2n$ with an effective smooth action of $(\C^*)^n$ is a \emph{topological toric manifold} if one of its orbits is open and dense and $M$ is covered by finitely many invariant open subsets each of which is equivariantly diffeomorphic to a smooth representation space of $(\C^*)^n$. Note that toric manifolds are topological toric manifolds by definition. Let us briefly recall its combinatorial counterpart called a topological fan.

    \begin{definition}
        Let $K$ be a simplicial complex of dimension $\le n-1$ and let $\mathbf{z}\colon V(K)\to \C^n$ and $\lambda\colon V(K) \to \Z^n$ be maps. The triple $\Sigma=(K,\mathbf{z},\lambda)$ is called a \emph{topological fan} of dimension $n$ if the followings hold:
        \begin{enumerate}
            \item $(K,\operatorname{Re}\mathbf{z})$ is a fan-giving real characteristic map, where $\operatorname{Re}\mathbf{z}\colon V(K)\to \R^n$ denotes the coordinate-wise real part of the vector $\mathbf{z}$ and
            \item $(K,\lambda)$ is a {primitive} characteristic map.
        \end{enumerate}
        We say $\Sigma$ is \emph{complete} if $(K,\operatorname{Re}\mathbf{z})$ is complete and it is \emph{non-singular} if $(K,\lambda)$ is non-singular.
    \end{definition}
    Let $M$ be a topological toric manifold of dimension $2n$. We say that a closed connected smooth submanifold of $M$ of real codimension two is a \emph{characteristic submanifold} if it is fixed pointwise under some $\C^\ast$-subgroup of $(\C^\ast)^n$. According to \cite{IFM12}, there are only finitely many characteristic submanifolds. A choice of an orientation on each characteristic submanifold together with the orientation of $M$ is called an \emph{omniorientation} on $M$.
    By the classification theorem of \cite{IFM12}, every omnioriented $2n$-dimensional topological toric manifold bijectively corresponds to a complete non-singular topological fan of dimension $n$.    Note that if $\mathbf{z}(i)=\lambda(i)$ for all $i\in V(K)$, then the topological fan corresponds to an ordinary fan. Furthermore, in this case, the notion of completeness and non-singularity above generalizes that for an ordinary fan.

    We denote by $T^n=(S^1)^n$ the compact torus with dimension $n$. Recall that a topological toric manifold $M$ is equipped by a $(\C^*)^n$-action. Thus $M$ has a natural $T^n$-action as a subgroup of $(\C^*)^n$. As a $T^n$-manifold, $M$ can be characterized by the following theorem of \cite{IFM12}.

    \begin{theorem}\cite[Theorem~7.2]{IFM12}
        Let $\Sigma=(K,\mathbf{z},\lambda)$ be a complete non-singular topological fan of dimension $n$ and $M$ be the corresponding topological toric manifold. Then the $T^n$-equivariant homeomorphism type of $M$ is independent of $\mathbf{z}$.
    \end{theorem}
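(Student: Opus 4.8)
The plan is to exploit the splitting $\C^\ast = \R_{>0}\times S^1$ of the acting group and to show that the parameter $\mathbf{z}$ only influences the non-compact $(\R_{>0})^n$-direction, which becomes invisible once the action is restricted to the compact torus $T^n$. Recall from \cite{IFM12} that $M$ is assembled from affine charts $U_\sigma \cong \C^n$, one for each maximal simplex $\sigma=\{i_1,\dots,i_n\}$ of $K$, on which $(\C^\ast)^n$ acts coordinatewise through smooth homomorphisms $\rho^\sigma_k\colon (\C^\ast)^n\to\C^\ast$. Every smooth homomorphism $\C^\ast\to\C^\ast$ has the form $u\mapsto |u|^{b}\,(u/|u|)^{c}$ with $b\in\C$ and $c\in\Z$, so each $\rho^\sigma_k$ is encoded by a pair of vectors, the continuous part $b$ coming from $\mathbf{z}|_\sigma$ and the integral part $c$ coming from $\lambda|_\sigma$. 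The first step is to record the elementary but decisive observation that on $T^n=(S^1)^n$ one has $|g_k|=1$ for all $k$, so every factor $|g_k|^{b_k}$ equals $1$ and the restriction $\rho^\sigma_k|_{T^n}$ depends only on $\lambda|_\sigma$. Hence the $T^n$-action on each chart is the standard representation twisted by the $\Z$-basis $\{\lambda(i_1),\dots,\lambda(i_n)\}$ and is entirely independent of $\mathbf{z}$.

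Having fixed the local models, the second step is to compare the global spaces $M_0=M(K,\mathbf{z}_0,\lambda)$ and $M_1=M(K,\mathbf{z}_1,\lambda)$ by passing to the $T^n$-orbit spaces. On each chart $U_\sigma/T^n\cong \C^n/T^n=(\R_{\ge 0})^n$, and the combinatorics of how the charts overlap is dictated solely by the face relations of $K$; consequently $Q_0:=M_0/T^n$ and $Q_1:=M_1/T^n$ are manifolds with corners whose face posets are both dual to $K$. Since $K$ is a fixed fan-like (equivalently star-shaped) sphere, these two orbit spaces are homeomorphic by a face-preserving homeomorphism, and the characteristic data attaching to each facet of $Q$ the isotropy circle $\exp(\R\,\lambda(i))\subset T^n$ is literally the same function of $(K,\lambda)$ in both cases. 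One then invokes the reconstruction principle, in the spirit of Davis--Januszkiewicz, that a locally standard $T^n$-manifold is recovered up to $T^n$-equivariant homeomorphism from its orbit space together with its characteristic function as a quotient $(Q\times T^n)/\!\sim_\lambda$. As both inputs agree for $M_0$ and $M_1$, this produces the desired $T^n$-equivariant homeomorphism $M_0\cong M_1$.

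The main obstacle is the second step: establishing that $Q=M/T^n$ genuinely carries a $\mathbf{z}$-independent structure as a manifold with corners and that the reconstruction applies in the present generality. The delicate point is that, unlike the quasitoric case, $Q$ need not be a polytope --- it is only the manifold-with-corners dual to a possibly non-polytopal star-shaped sphere --- so one cannot simply cite the polytopal Davis--Januszkiewicz theorem and must instead verify local standardness from the IFM charts and glue the canonical models over the non-free strata where some coordinates $w_k$ vanish. If one prefers a hands-on argument avoiding a general reconstruction theorem, the same obstruction reappears as the task of building an explicit $T^n$-equivariant homeomorphism chart by chart: writing $w_k=|w_k|\,(w_k/|w_k|)$, the phases are governed by $\lambda$ and are common to $M_0$ and $M_1$, while the moduli must be reparametrized so as to intertwine the two monomial transition maps, and the real work is checking that this radial reparametrization extends continuously across the loci $w_k=0$. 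In either route the conceptual content is exactly the $|g_k|=1$ observation of the first step, and the labor lies entirely in the compatibility of the gluings over the lower-dimensional orbit strata.
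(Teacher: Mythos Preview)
The paper does not contain a proof of this theorem: it is quoted verbatim from \cite[Theorem~7.2]{IFM12} and used as input, with no argument given. So there is no ``paper's own proof'' to compare against.

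That said, your sketch is along the correct lines and matches the strategy of the original proof in \cite{IFM12}. The key observation that $|g_k|^{b_k}=1$ on $T^n$, so the chartwise $T^n$-action depends only on $\lambda$, is exactly the point, and the reconstruction of $M$ from the orbit space $Q=M/T^n$ together with the characteristic data is how \cite{IFM12} proceeds (indeed the present paper remarks immediately after the theorem that ``the orbit space $P$ of a topological toric manifold $M(K,\lambda)$ is a manifold with corners determined by $K$''). You have also correctly flagged the genuine technical content: one must verify local standardness and the Davis--Januszkiewicz-type reconstruction $(Q\times T^n)/\!\sim_\lambda \cong M$ when $Q$ is a manifold with corners coming from a star-shaped but not necessarily polytopal sphere, and check continuity of the gluing across the non-free strata. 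This is carried out in Sections~7--8 of \cite{IFM12}, so your ``obstacle'' is real but is precisely what that reference supplies.
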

    In other words, any omnioriented topological toric manifold as a $T^n$-manifold is determined by a complete non-singular characteristic map $(K,\lambda)$ equipped with an orientation of $|K|$. We say that $(K,\lambda)$ is an \emph{oriented characteristic map} if an orientation of $|K|$ is fixed and the orientation is called an \emph{orientation} of $(K,\lambda)$. In this paper, we regard topological toric manifolds as $T^n$-manifolds. In addition, even if we do not have defined, we will use the notation $\lambda(M)$ for the characteristic map corresponding to a topological toric manifold $M$, and $M(\lambda)$ or $M(K,\lambda)$ vice versa. Similarly, $K(M)$ means the underlying complex for $M$. The same goes for $\Sigma(M)$ and $M(\Sigma)$ for a toric manifold $M$ and its fan $\Sigma$.

    \subsection{Quasitoric manifolds}

    Quasitoric manifolds, introduced in \cite{DJ91}, are another topological analogue of toric manifolds. A closed smooth manifold $M$ of dimension $2n$ with a smooth action of $T^n$ is called a \emph{quasitoric manifold} over a simple polytope $P$ if
    \begin{enumerate}
        \item the action of $T^n$ on $M$ is locally standard and
        \item the orbit space $M/T^n$ is $P$.
    \end{enumerate}
    Its combinatorial object is called the characteristic function. Let $\cF$ be the set of facets of a simple $n$-polytope $P$. A \emph{characteristic function} over $P$ is a map $f\colon \cF\to \Z^n$ satisfying non-singularity condition
    \begin{multline*}
        F_{i_1},\ldots,F_{i_n}\in\cF\text{ intersect at a vertex of }P \\
        \Longrightarrow \{f(F_{i_j})\mid j=1,\ldots,n\}\text{ is an integral basis of }\Z^n.
    \end{multline*}
    Since the dual of $P$ is a simplicial $n$-polytope, $f$ induces a complete non-singular characteristic map $(K,\lambda)$ whose underlying complex is a polytopal sphere. Polytopal spheres are star-shaped by convexity and therefore $(K,\lambda)$ defines a topological toric manifold $M'$. Indeed, $M'$ is equivariantly homeomorphic to $M$ (for a proof see Section 10 of \cite{IFM12}). This shows any quasitoric manifold is a topological toric manifold. Conversely, there is a topological toric manifold whose underlying complex is not polytopal (see \cite{IFM12} for an example and note that in the example the underlying complex is the Barnette sphere).

    There are many examples of quasitoric manifolds which are not toric manifolds. We remark that it is an open problem whether there exists a toric manifold which is not a quasitoric manifold or not. Note that this is about finding a complete non-singular fan which is not weakly polytopal.

    \subsection{Characteristic maps and Todd genera}\label{subsec:charandmultifans}

    A closed connected smooth orientable manifold $M$ of dimension $2n$ is called a \emph{torus manifold} if it is equipped with an effective $T^n$-action which has a nonempty fixed point set. Torus manifolds make a large class of manifolds properly containing topological toric manifolds (and, obviously, toric manifolds and quasitoric manifolds). A torus manifold has its own combinatorial object, called a multi-fan, which can be roughly understood as a collection of cones similar to a fan but the cones may ``overlap''. %Let us briefly introduce the definition of multi-fans due to \cite{HM03}. Let $K$ be a finite poset with a unique minimum element $*$ and suppose that a strongly convex polyhedral cone in $\R^n$ is assigned to each element $I$ of $K$, written as $C(I)$. We assume that the following conditions:
    %\begin{enumerate}
    %    \item $C(*) = \{0\}$;
    %    \item If $I<J$ for $I,J\in K$, then $C(I)$ is a proper face of $C(J)$;
    %    \item For any $J\in K$ the map $C$ restricted on the interval $[*,J]$ is an isomorphism of ordered sets onto the face poset of $C(J)$.
    %\end{enumerate}
    %The pair $(K,C)$ can be regarded as a set of cones in $\R^n$ labeled by the poset $K$, although the same cone may appear repeatedly with different labels.
    %Similar to a fan, consider a multiset $C$ of strongly convex cones in $\R^n$ such that if $\sigma$ is a cone in $C$ then every proper face of $\sigma$ is also an element of $C$. Note that the second condition of fans is dropped, but $C$ still has a poset structure via inclusion.
    Although we do not present the precise definition of multi-fans, we use the concept of overlapping cones to consider fan-givingness of a characteristic map. For further reading for multi-fans, refer to \cite{HM03}.

    Let $(K,\lambda)$ be a characteristic map of dimension $n$ and $I\in K$ a face of $K$. One defines the cone over $I$ be the positive hull $\pos \{\lambda(i)\mid i\in I\}$ and denote it by $\angle\lambda_I$. From now on, we assume $(K,\lambda)$ is complete and follow an argument of Section 4 of \cite{IM12}. First, we consider the (geometric) simplicial complex $|K|$ which is an $(n-1)$-dimensional sphere. We set

    \[
        \sigma_I :=\left\{\sum_{i\in I}a_i\mathbf{e}_i\;\middle|\; \sum_{i\in I}a_i = 1,\ a_i\ge 0\right\}\subset\R^m\quad\text{ for }I\in K,
    \]
    where $\mathbf{e}_i$ is the $i$-th coordinate vector of $\R^m$. The geometric realization $|K|$ of $K$ is given by
    \[
        |K| = \bigcup_{i\in K}\sigma_I.
    \]
    Let us write $\lambda(i)=\lambda_i\in\Z^n$ for $i=1,\dotsc,m$. We define a map $f_\lambda\colon |K|\to S^{n-1}$ by
    \[
        f_\lambda|_{\sigma_I} \left(\sum_{i\in I}a_i\mathbf{e}_i\right) = \frac{\sum_{i\in I}a_i\lambda_i}{|\sum_{i\in I}a_i\lambda_i|}.
    \]
    Observe that $f_\lambda$ is a homeomorphism if and only if $\lambda$ is fan-giving.

    Fix an orientation of $|K|$. For each cone $\angle\lambda_I$ of dimension $n$ (or an $(n-1)$-face $I$ of $K$ equivalently), we assign $+1$ or $-1$, called the \emph{weight function} $w(I)$ of $\lambda$, so that
    \[
        w(I)=\left\{
             \begin{array}{ll}
               +1, & \hbox{The orientations of $\sigma_I\subset|K|$ and $f_\lambda|_{\sigma_I}$ coincide;} \\
               -1, & \hbox{otherwise.}
             \end{array}
           \right.
    \]
    \begin{definition}\label{def:positivechr}
    An oriented complete characteristic map $\lambda$ is said to be \emph{positive} if every $w(I)$ is positive. In this case the orientation of $|K|$ is called the \emph{positive orientation} of $\lambda$ and $\lambda$ is said to be \emph{positively oriented} or shortly \emph{positive}.
    \end{definition}

    The next definition is confirmed available by Theorem~4.2 of \cite{Mas99}.
    \begin{definition}
        Let $(K,\lambda)$ be a given complete characteristic map of dimension $n$. Let $v$ be a generic vector in $\R^n$ in the sense that $v$ does not lie in any cone $\angle \lambda_I$ for a non-maximal face $I$ of $K$. Then the value
        \[
            \sum_{I\colon v\in \angle\lambda_I}w(I)
        \]
        is independent of the choice of $v$. We call the value the \emph{Todd genus} of $(K,\lambda)$ and denote it by $\operatorname{Todd}(\lambda)$.
    \end{definition}
%    The Todd genus $\operatorname{Todd}(\lambda)$ {of $\lambda$} can be computed as follows.
%    \begin{theorem}\label{thm:toddformula}\cite{Mas99}
%        Let $v$ be a generic vector in $\R^n$ in the sense that $v$ does not lie in any cone $\angle \lambda_I$ for a non-maximal face $I$ of $K$. Then
%        \begin{equation}\label{eqn:toddweight}
%            \operatorname{Todd}(\lambda) = \sum w(I)
%        \end{equation}
%        where the summation runs over all $I\in K$ such that $\angle\lambda_I$ contains $I$.
%    \end{theorem}
    This definition actually says that the Todd genus $\operatorname{Todd}(\lambda)$ is the degree of $f_\lambda$ as a map between spheres. We can check whether a characteristic map is fan-giving. The following is a version of Lemma~4.1 of \cite{IM12}.
    \begin{proposition}\label{prop:toddandfangiving}
        A complete characteristic map $\lambda$ is fan-giving if and only if $\lambda$ is positive and  $\operatorname{Todd}(\lambda)=1$.
    \end{proposition}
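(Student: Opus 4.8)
The plan is to phrase everything through the map $f_\lambda\colon |K|\to S^{n-1}$ introduced above. Recall that $\lambda$ is fan-giving if and only if $f_\lambda$ is a homeomorphism, that $\operatorname{Todd}(\lambda)$ equals the degree of $f_\lambda$, and that $w(I)$ is the local degree of $f_\lambda$ at a point of $\relint\sigma_I$ for a maximal face $I$. Write $s_I:=\angle\lambda_I\cap S^{n-1}$. Two elementary remarks do the work. First, call $v\in S^{n-1}$ \emph{generic} if $v$ lies on no cone $\angle\lambda_{I'}$ with $I'$ a non-maximal face; then $f_\lambda^{-1}(v)$ is contained in the union of the interiors of the top cells, a top cell $\sigma_I$ contributes a single point to $f_\lambda^{-1}(v)$ exactly when $v\in\angle\lambda_I$, and near such a point $f_\lambda$ is a local homeomorphism (a small ball about $v$ meets no $s_J$ other than the one containing $v$), so its local degree there is $w(I)$. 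In particular, for generic $v$ one has $\operatorname{Todd}(\lambda)=\sum_{v\in\angle\lambda_I}w(I)$ (sum over maximal $I$) while $\#f_\lambda^{-1}(v)=\#\{I\text{ maximal}:v\in\angle\lambda_I\}$. Second, as a continuous map from a compact space to a Hausdorff one, $f_\lambda$ is closed, and its fibres are finite because each top cell meets a fibre in at most one point (the $\lambda(i)$, $i\in I$, being linearly independent); hence the index formula $\deg f_\lambda=\sum_{x\in f_\lambda^{-1}(p)}\deg_x f_\lambda$ holds at \emph{every} $p\in S^{n-1}$, not only at regular values.

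For the ``only if'' direction: if $\lambda$ is fan-giving, then $f_\lambda$ is a self-homeomorphism of $S^{n-1}$, and orienting $|K|$ so that $f_\lambda$ is orientation-preserving gives $\operatorname{Todd}(\lambda)=\deg f_\lambda=1$, while $f_\lambda|_{\sigma_I}$ is orientation-preserving for each maximal $I$, so $w(I)=+1$; thus $\lambda$ is positive with $\operatorname{Todd}(\lambda)=1$ (with respect to its positive orientation).

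For the ``if'' direction, assume $\lambda$ is positive — fix its positive orientation, so every $w(I)=+1$ — and $\operatorname{Todd}(\lambda)=1$. By the first remark, every generic $v$ has exactly one preimage. The essential step is to promote this to the statement that \emph{every} $p$ has exactly one preimage, and for that I will first establish the local bound $\deg_x f_\lambda\ge 1$ for all $x\in|K|$. Granting it, the second remark gives, for an arbitrary $p$,
\[
 1=\operatorname{Todd}(\lambda)=\deg f_\lambda=\sum_{x\in f_\lambda^{-1}(p)}\deg_x f_\lambda\ \ge\ \#f_\lambda^{-1}(p),
\]
so $f_\lambda$ is a continuous bijection from the compact space $|K|$ onto $S^{n-1}$ (onto because its degree is nonzero), hence a homeomorphism, hence $\lambda$ is fan-giving.

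Proving the local bound is the only genuine obstacle. Fix $x$, let $I_0$ be the face with $x\in\relint\sigma_{I_0}$, and choose a maximal face $I\supseteq I_0$ (a simplicial sphere is pure). Put $p=f_\lambda(x)$ and pick a neighbourhood $W$ of $x$ in $|K|$ with $\overline W\cap f_\lambda^{-1}(p)=\{x\}$, so that $\deg_x f_\lambda=\sum_{y\in f_\lambda|_W^{-1}(v)}\deg_y f_\lambda$ for any generic $v$ close enough to $p$. Since $f_\lambda|_{\sigma_I}$ is a homeomorphism onto $s_I$ and $x\in\partial\sigma_I$, the set $f_\lambda(\relint\sigma_I\cap W)$ is a nonempty open subset of $\relint s_I$ with $p$ in its closure, hence it contains a generic $v$ arbitrarily close to $p$; for such $v$ the fibre $f_\lambda|_W^{-1}(v)$ is nonempty and, by the first remark, lies in interiors of top cells where every local degree equals the corresponding weight $+1$, so $\deg_x f_\lambda=\#f_\lambda|_W^{-1}(v)\ge1$. (Alternatively the local bound can be obtained by induction on $n$, matching the germ of $f_\lambda$ at $x$ with that of the map attached to the projected characteristic map $(\link_K I_0,\proj_{I_0}\lambda)$ via Lemma~\ref{lem:projection}, but the direct degree computation is shorter.) The routine points suppressed above — local triviality of $f_\lambda$ over generic values, the precise index formula for maps with finite fibres, and purity of simplicial spheres — would be spelled out in the actual proof.
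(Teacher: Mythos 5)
Your argument is correct, and it runs along the same degree-theoretic lines as the paper's: both identify $\operatorname{Todd}(\lambda)$ with $\deg f_\lambda$, use positivity to force every weight $w(I)$ to be $+1$, and deduce from $\operatorname{Todd}(\lambda)=1$ that each generic point of $S^{n-1}$ has exactly one preimage. The difference lies in how that generic statement is upgraded to ``$\lambda$ is fan-giving.'' The paper stops at ``any generic vector is contained in exactly one maximal cone, hence $\lambda$ is fan-giving,'' outsourcing the remaining work to Lemma~4.1 of \cite{IM12} (and, incidentally, its printed proof has the two directions garbled); you instead make the proof self-contained by establishing the pointwise bound $\deg_x f_\lambda\ge 1$ and applying the index formula over an \emph{arbitrary} value $p$, which forces $f_\lambda$ to be a continuous bijection and hence a homeomorphism. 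That local bound is precisely the content hidden in the citation, and your derivation of it --- choosing a top cell $I$ containing the carrier of $x$ and pushing a generic value into $f_\lambda(\relint\sigma_I\cap W)$ --- is sound; what it buys is a complete argument independent of \cite{IM12}. Two cosmetic slips, neither of which is a gap: the parenthetical reason you give for $f_\lambda$ being a local homeomorphism at a preimage of a generic $v$ (``a small ball about $v$ meets no $s_J$ other than the one containing $v$'') is false in general, since a generic $v$ may be arbitrarily close to other cones; the correct and sufficient reason is that the preimage lies in $\relint\sigma_I$, which is open in $|K|$ and carried homeomorphically onto the open set $\relint s_I$. Likewise, your phrase ``$x\in\partial\sigma_I$'' fails when $x$ already lies in the interior of a top cell, but in that case $\deg_x f_\lambda=w(I)=+1$ is immediate.
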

    \begin{proof}
        The only-if part is obvious. Suppose that $\lambda$ is fan-giving. Since $\lambda$ is positive, the weight $w(I)$ is nonnegative for all $I$ such that $v\in \angle\lambda_I$. Because the sum of such $w(I)$ is one, any generic vector $v$ is contained exactly one maximal cone $\angle\lambda_I$. Hence $\lambda$ is fan-like.
    \end{proof}

%    The following definition is due to \cite{IFM12}.
%
%    \begin{definition}
%        Let $(K,\lambda)$ be a characteristic map of dimension $n$. Let us denote by $\cF(K)$ the set of $(n-1)$-faces of $K$. Suppose we are given two maps $w^{\pm}\colon \cF(K)\to\Z_{\ge 0}$, called \emph{weight functions}, such that either $w^+(I)$ or $w^-(I)$ is nonzero for any $I\in\cF(K)\subset K$. In this setting, the triple $(K,\lambda,w^{\pm})$ is said to be a \emph{multi-fan}.
%    \end{definition}
%
%    We have to mention two things. First, $K$ should be generally a simplicial poset, which is a generalization of a simplicial complex, to deal with general torus manifolds. Second, even if a multi-fan might look similar to a characteristic map, they are different objects by the existence of the weight functions. Their essential difference is the definition of completeness.
%
%    Although our interest in this paper is restricted to the topological toric manifolds,

\section{Toric objects over the complex $K(J)$}\label{sec:toricandwedge}
    In this section, we study the relation of simplicial wedging and toric objects and prove Theorem~\ref{thm:mainthm}. To do so, we need the notion of ``projected characteristic map'' first of all.

    \begin{definition}
        Let $(K,\lambda)$ be a characteristic map of dimension $n$ and $\sigma\in K$ a face of $K$ {such that the set $\{\lambda(i)\mid i\in \sigma\}$ is unimodular}. Let $v$ be a vertex of $\link_K\sigma$. Then one maps $v$ to $[\lambda(v)]$ which is an element of the quotient lattice of $\Z^n$ by the sublattice generated by $\lambda(i)$, $i\in\sigma$. This map, denoted by $\proj_\sigma\lambda$, is called the \emph{projected characteristic map}, or shortly the \emph{projection}, of $\lambda$ with respect to $\sigma$.
    \end{definition}
    {
    There is a similar notion called the \emph{projected fans} (see Section 2 of \cite{HM03}). Note that projected characteristic maps generalize projected fans whenever it is applicable. We denote by $\proj_\sigma\Sigma$ the projected fan of a fan $\Sigma$ with respect to a face $\sigma$ of $K(\Sigma)$.
    }

    \begin{lemma}\label{lem:projection}
        Let $K$ be a fan-like sphere. Then for any proper face $\sigma$ of $K$, $\link_K\sigma$ is a fan-like sphere. If $(K,\lambda)$ is a complete non-singular characteristic map, then for any $\sigma$, its projection $(\link_K\sigma,\proj_\sigma\lambda)$ is also complete and non-singular. If $\lambda$ is fan-giving, so is $\proj_\sigma\lambda$.
    \end{lemma}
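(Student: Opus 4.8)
The plan is to reduce all three assertions to the construction of \emph{projected fans} from Section~2 of \cite{HM03}: the essential point is that if $\Sigma$ is a complete (simplicial) fan over $K$ and $\tau_\sigma$ is the cone of $\Sigma$ spanned by the rays corresponding to the vertices of $\sigma$, then pushing forward, under the quotient $q\colon\R^n\to\R^n/\operatorname{span}(\tau_\sigma)$, those cones of $\Sigma$ that contain $\tau_\sigma$ yields a complete fan whose underlying complex is $\link_K\sigma$. Granting this, the non-singularity and fan-giving statements become short bookkeeping with quotient lattices.

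For the first assertion I would take a complete simplicial fan $\Sigma$ over $K$ (it exists because $K$ is fan-like), write $v_i$ for the ray generator attached to the vertex $i$, put $\tau_\sigma=\pos\{v_i\mid i\in\sigma\}$, and let $q\colon\R^n\to\R^n/\operatorname{span}(\tau_\sigma)$, a space of dimension $n-|\sigma|$. Define $\proj_\sigma\Sigma=\{q(C)\mid C\in\Sigma,\ \tau_\sigma\subseteq C\}$. Three checks are needed: (a) for such a $C$, writing $C=\pos(\{v_i\mid i\in\sigma\}\cup\{w_1,\dots,w_l\})$ with the displayed generators linearly independent (possible since $\Sigma$ is simplicial and $\tau_\sigma$ is a face of $C$), one gets $q(C)=\pos\{q(w_1),\dots,q(w_l)\}$ with the $q(w_j)$ still linearly independent, so $\proj_\sigma\Sigma$ is a simplicial collection of cones whose face poset is exactly that of $\link_K\sigma$; (b) $\proj_\sigma\Sigma$ is closed under faces and any two of its members meet in a common face --- a statement local around $\tau_\sigma$ that follows from the fan axioms for $\Sigma$; (c) $\proj_\sigma\Sigma$ is complete: fixing $x_0\in\relint(\tau_\sigma)$, every cone of $\Sigma$ through $x_0$ must contain $\tau_\sigma$ as a face, so the cones having $\tau_\sigma$ as a face form a neighbourhood of $x_0$, whence for any $\bar x$ and small $\varepsilon>0$ the point $x_0+\varepsilon\tilde x$ (a lift of $\varepsilon\bar x$) lies in some such $C$ and therefore $\bar x\in q(C)$ after rescaling. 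This realizes $\link_K\sigma$ as the underlying complex of a complete fan, hence as a fan-like sphere; its dimension is $(n-|\sigma|)-1$ because a maximal face of $K$ through $\sigma$ has $n$ vertices.

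The non-singularity in the second assertion I would prove straight from the definition, with no appeal to fans: for a face $\tau$ of $\link_K\sigma$ the set $\sigma\cup\tau$ is a face of $K$, so by non-singularity of $(K,\lambda)$ the vectors $\{\lambda(i)\mid i\in\sigma\cup\tau\}$ are part of an integral basis of $\Z^n$; since $\{\lambda(i)\mid i\in\sigma\}$ is unimodular, $\Z^n/\langle\lambda(i)\mid i\in\sigma\rangle$ is free of rank $n-|\sigma|$ and the classes $\{(\proj_\sigma\lambda)(i)\mid i\in\tau\}$ form part of an integral basis of it, in particular are linearly independent. Thus $(\link_K\sigma,\proj_\sigma\lambda)$ is a non-singular characteristic map of dimension $n-|\sigma|$, and it is complete by the first assertion. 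For the last sentence, if in addition $\lambda$ is fan-giving then the cones $\{\pos\{\lambda(i)\mid i\in\eta\}\mid\eta\in K\}$ are a complete fan $\Sigma$; applying the construction above to this $\Sigma$ and using $q(\lambda(i))=0$ for $i\in\sigma$, one computes for $\tau\in\link_K\sigma$ that
\[
    q\bigl(\pos\{\lambda(i)\mid i\in\sigma\cup\tau\}\bigr)=\pos\{(\proj_\sigma\lambda)(i)\mid i\in\tau\},
\]
so the collection $\{\pos\{(\proj_\sigma\lambda)(i)\mid i\in\tau\}\mid\tau\in\link_K\sigma\}$ coincides with $\proj_\sigma\Sigma$ and is therefore a fan; that is, $\proj_\sigma\lambda$ is fan-giving.

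I expect the only genuine obstacle to be checks (b) and (c) of the second paragraph --- namely that the images $q(C)$ really do assemble into a \emph{complete fan} rather than an overlapping collection of cones. This is precisely the projected-fan construction of \cite[Section~2]{HM03}, so in the final write-up it may simply be quoted from there; alternatively, the short local argument around $x_0\in\relint(\tau_\sigma)$ sketched above proves it directly. Everything else --- the identification of the face poset of $\proj_\sigma\Sigma$ with that of $\link_K\sigma$, and the non-singular/fan-giving statements --- is routine computation with quotient lattices.
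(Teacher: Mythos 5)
Your proposal is correct and follows essentially the same route as the paper: the paper's proof simply invokes the projected-fan construction of \cite[Section~2]{HM03} to get completeness of $\proj_\sigma\Sigma$ (hence fan-likeness of $\link_K\sigma$) and declares the non-singularity and fan-giving assertions obvious. You have merely written out in full the details that the paper delegates to that citation, and your verifications (the local argument at $x_0\in\relint\tau_\sigma$ for completeness, the quotient-lattice computation for non-singularity) are all sound.
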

    \begin{proof}
        This is a topological toric version of projected fans and the proof is essentially the same. Since $K$ is fan-like, there exists a complete real (or rational) fan $\Sigma$ over $K$. Its projected fan is complete and therefore $\link_K\sigma$ is a fan-like sphere. Other assertions are obvious.
    \end{proof}
    We note that one can define projected topological fans in the same way. When $\sigma$ is a vertex, the projection $\proj_\sigma\lambda$ corresponds to a characteristic submanifold of $M(\lambda)$. We also remark that the above lemma shows that any multi-fan given by a complete characteristic map (or a complete topological fan) is complete.

    If $(K,\lambda)$ is an oriented complete characteristic map, then a projected characteristic map $(\link_K\sigma,\proj_\sigma\lambda)$ inherits an orientation so that
    \[
        w(I\cup\sigma) = w'(I),
    \]
    where $w$ is the weight function for $\lambda$ and $w'$ is that for $\proj_\sigma\lambda$. In this orientation convention, one immediately sees that an oriented complete characteristic map is positive if and only if every projection of it is positive.

    According to Section~7 of \cite{IFM12}, the orbit space $P$ of a topological toric manifold $M(K,\lambda)$ is a manifold with corners determined by $K$ whose face poset coincides with the inverse poset of $K$. We say two topological toric manifolds $\pi_i\colon M_i \to P$, $i=1,2$, are \emph{Davis-Januszkiewicz equivalent} or \emph{D-J equivalent} if there is an automorphism $\theta$ of $T^n$ and a homeomorphism $f\colon M_1\to M_2$ such that $f(g\cdot x) = \theta(g)\cdot f(x)$ and $\pi_2\circ f = \pi_1$. In other words, the following diagram
    \[
        \xymatrix{
                M_1 \ar[rr]^f \ar[dr] & & M_2 \ar[dl] \\
                 & P  }
    \]
    commutes.

    For two characteristic maps $\lambda_i=\lambda(M_i)$, $i=1,2$, we also say $(K,\lambda_1)$ and $(K,\lambda_2)$ are Davis-Januszkiewicz equivalent. Refer \cite{DJ91} and \cite{IFM12} for more details.

    Let $K$ be a fan-like sphere with $V(K)=[m]=\{1,\dotsc,m\}$. A characteristic map $\lambda\colon V(K) \to \Z^n$ can be regarded as an $n \times m$-matrix, called the \emph{characteristic matrix}, which is again denoted by $\lambda$. Each column is labeled by a vertex and the $i$-th column vector of the matrix $\lambda$ corresponds to $\lambda(i)$. Two characteristic maps $(K,\lambda_1)$ and $(K,\lambda_2)$ are Davis-Januszkiewicz equivalent if and only if there is a unimodular map sending the $i$-th column vector of $\lambda_1$ to that of $\lambda_2$ for all $i=1,\dotsc,m$. Therefore, elementary row operations on $\lambda$ preserve its D-J equivalence type and vice versa. We consider two characteristic maps are the same if they are D-J equivalent.

    \begin{example}
        Let $\wed_{1}(K)$ be the simplicial complex shown in Figure~\ref{fig:wedge} and $\lambda$ is defined by the characteristic matrix
        \[
            \lambda=\left(\,\begin{matrix}
                0 & 1 & 0 & -1 & -1 & 0 \\
                0 & 0 & 1 &  1 & 0  & -1 \\
                1 &-1 & 0 &  0& 0 & 0
            \end{matrix}\,\right)
        \]
        whose columns are labeled by the vertices ${1_1},{1_2},2,3,4,5$ respectively. That is, we define
        \begin{align*}
            \lambda({1_1}) &= (0,0,1) \\
            \lambda({1_2}) &= (1,0,-1) \\
            \lambda(2) &= (0,1,0) \\
            \lambda(3) &= (-1,1,0) \\
            \lambda(4) &= (-1,0,0) \\
            \lambda(5) &= (0,-1,0).
        \end{align*}
        Since $\lambda({1_1})$ is a coordinate vector, the projection $\proj_{{1_1}}\lambda$ is easily obtained by
        \[
            \proj_{{1_1}}\lambda = \left(\,\begin{matrix}
                 {1_2} & 2& 3 & 4 & 5 \\ \hline
                 1 & 0 & -1 & -1 & 0 \\
                 0 & 1 &  1 & 0  & -1
            \end{matrix}\,\right)
        \]
        where the first row is for indicating column labeling. To compute $\proj_{{3}}\lambda$, one should perform a row operation so that $\lambda(3)$ becomes a coordinate vector. Add the second row of $\lambda$ to the first one and one obtains
        \[
            \left(\,\begin{matrix}
                0 & 1 & 1 & 0 & -1 & -1 \\
                0 & 0 & 1 &  1 & 0  & -1 \\
                1 &-1 & 0 &  0& 0 & 0
            \end{matrix}\,\right).
        \]
        Since $\link_{K}\{3\}$ has vertices ${1_1},{1_2},2,4$, its characteristic matrix looks like
        \[
            \proj_{{3}}\lambda = \left(\,\begin{matrix}
                {1_1} & {1_2} & 2 & 4 \\ \hline
                0 & 1 & 1 & -1 \\
                1 & -1 & 0 & 0
            \end{matrix}\,\right).
        \]
    \end{example}

    \begin{proposition}\label{prop:chrwedge}
        Let $K$ be a fan-like sphere with vertex set $V$. For $v\in V$, let $\wed_v(K)$ be the wedge of {$K$} whose vertex set is $V\cup\{v_1,v_2\}\setminus\{v\}$ and let $\lambda$ be a complete characteristic map on $\wed_v(K)$ {such that $\{\lambda(v_1),\lambda(v_2)\}$ is a unimodular set}. Then $\lambda$ is non-singular if and only if $\proj_{v_1}\lambda$ and $\proj_{v_2}\lambda$ are non-singular. Furthermore, $\lambda$ is uniquely determined by $\proj_{v_1}\lambda$ and $\proj_{v_2}\lambda$. If the inherited orientations of $\proj_{v_1}\lambda$ and $\proj_{v_2}\lambda$ are positive, $\lambda$ is positively oriented.
    \end{proposition}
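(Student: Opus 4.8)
The plan is to reduce all three assertions to a bookkeeping of the facets of $\wed_v(K)$. Write $\dim K=n-1$, so that $\wed_v(K)$ is a simplicial $n$-sphere by Proposition~\ref{prop:propofwedge}\eqref{stm:sphere}. Reading off $\wed_v(K)=(I\star\link_K\{v\})\cup(\partial I\star(K\setminus\{v\}))$, every facet of $\wed_v(K)$ is of one of two types: (a) $\{v_1,v_2\}\cup\tau$, where $\tau$ runs over the facets of $\link_K\{v\}$ (equivalently $\{v\}\cup\tau$ runs over the facets of $K$ containing $v$); or (b) $\{v_i\}\cup\omega$ with $i\in\{1,2\}$, where $\omega$ runs over the facets of $K$ not containing $v$ (these are the facets of the ball $K\setminus\{v\}$). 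In particular \emph{every facet of $\wed_v(K)$ contains $v_1$ or $v_2$}, since a facet containing neither would be an $(n+1)$-element face of the $(n-1)$-dimensional complex $K\setminus\{v\}$; this is the fact that will drive the positivity statement. The same computation identifies $\link_{\wed_v(K)}\{v_i\}$ with $K$, the ``other'' new vertex $v_j$ playing the role of $v$, under which identification the facets of $\link_{\wed_v(K)}\{v_i\}$ are the $\{v_j\}\cup\tau$ (from type (a)) together with the $\omega$'s (from type (b)), matching the facets of $K$.

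For the uniqueness claim I would normalise $\lambda$ up to Davis--Januszkiewicz equivalence. Since $\{\lambda(v_1),\lambda(v_2)\}$ is unimodular, row operations bring $\lambda$ into a form with $\lambda(v_1)=\mathbf e_1$ and $\lambda(v_2)=\mathbf e_2$. For every remaining vertex $w$, the images of $\lambda(w)\in\Z^n$ under the two quotient maps $\Z^n\to\Z^n/\langle\mathbf e_1\rangle$ and $\Z^n\to\Z^n/\langle\mathbf e_2\rangle$ are precisely $\proj_{v_1}\lambda(w)$ and $\proj_{v_2}\lambda(w)$ (after the identification of the links with $K$); since $\langle\mathbf e_1\rangle\cap\langle\mathbf e_2\rangle=0$, the diagonal map $\Z^n\hookrightarrow\Z^n/\langle\mathbf e_1\rangle\oplus\Z^n/\langle\mathbf e_2\rangle$ is injective, so $\lambda(w)$ — hence the normalised $\lambda$, hence $\lambda$ up to D-J equivalence — is recovered from $\proj_{v_1}\lambda$ and $\proj_{v_2}\lambda$.

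For non-singularity I would argue facet by facet, using the elementary observation underlying Lemma~\ref{lem:projection}: because $\lambda(v_i)$ is primitive, a set $\{\lambda(j)\mid j\in\{v_i\}\cup\rho\}$ is unimodular in $\Z^n$ if and only if $\{\proj_{v_i}\lambda(j)\mid j\in\rho\}$ is unimodular in $\Z^n/\langle\lambda(v_i)\rangle\cong\Z^{n-1}$. A type-(a) facet, written $\{v_1\}\cup(\{v_2\}\cup\tau)$, is thus unimodular for $\lambda$ iff $\{v_2\}\cup\tau$ is unimodular for $\proj_{v_1}\lambda$ (and, symmetrically, iff $\{v_1\}\cup\tau$ is unimodular for $\proj_{v_2}\lambda$); a type-(b) facet $\{v_i\}\cup\omega$ is unimodular for $\lambda$ iff $\omega$ is unimodular for $\proj_{v_i}\lambda$. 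Comparing with the facet list of $\link_{\wed_v(K)}\{v_i\}\cong K$ from the first paragraph, $\proj_{v_1}\lambda$ is non-singular iff $\lambda$ is unimodular on all facets of type (a) and on all type-(b) facets containing $v_1$, and likewise for $v_2$; the conjunction of these two conditions is exactly the non-singularity of $\lambda$. Hence $\lambda$ is non-singular iff both $\proj_{v_1}\lambda$ and $\proj_{v_2}\lambda$ are.

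Finally, for positivity I would fix an orientation $o$ of $|\wed_v(K)|$ and give $\link_{\wed_v(K)}\{v_1\}$ and $\link_{\wed_v(K)}\{v_2\}$ the inherited orientations. By the inheritance convention recalled before Proposition~\ref{prop:chrwedge} (namely $w(F\cup\{v_i\})=w_i'(F)$, relating the weight function $w$ of $\lambda$ to the weight function $w_i'$ of $\proj_{v_i}\lambda$), positivity of $\proj_{v_i}\lambda$ says exactly that $w(F)=+1$ for every facet $F$ of $\wed_v(K)$ containing $v_i$. Since every facet of $\wed_v(K)$ contains $v_1$ or $v_2$, assuming both $\proj_{v_1}\lambda$ and $\proj_{v_2}\lambda$ positive forces $w\equiv+1$, i.e. $\lambda$ is positively oriented (and the converse is immediate, which is what Theorem~\ref{thm:mainthm}(2) will ultimately need). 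The only genuinely laborious step is the first one — correctly enumerating the facets of $\wed_v(K)$ and of its vertex-links and isolating that no facet misses both $v_1$ and $v_2$; once that combinatorial picture is in place, non-singularity is a mechanical application of the projection lemma, and uniqueness and positivity are the two short observations above.
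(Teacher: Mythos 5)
Your proof is correct and follows essentially the same route as the paper's: the paper likewise disposes of non-singularity and positivity by the single observation that every maximal face of $\wed_v(K)$ contains $v_1$ or $v_2$, and proves uniqueness by normalising $\lambda(v_1)=\mathbf e_1$, $\lambda(v_2)=\mathbf e_2$ and reading off the matrix from the two projections. Your diagonal-injectivity phrasing of the uniqueness step (via $\langle\mathbf e_1\rangle\cap\langle\mathbf e_2\rangle=0$) is just a coordinate-free restatement of that same matrix computation, and your facet enumeration makes explicit what the paper leaves as ``easily verified.''
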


    \begin{proof}
        The non-singularity is easily verified since every maximal face of $\wed_v(K)$ contains $v_1$ or $v_2$. The positiveness also directly follows. Let us prove uniqueness of $\lambda$. We can assume that $v=1$. {By assumption,} the set $\{\lambda(1_1), \lambda(1_2)\}$ is unimodular, and for a suitable basis of $\Z^n$, we can assume that $\lambda(1_1)=e_1$ and $\lambda(1_2)=e_2$ where $e_i$ denotes the $i$-th coordinate vector of $\Z^n$. So the matrix for $\lambda$ has the form
        \begin{equation}\label{eqn:matrixwedge}
        \lambda=\left(
        \begin{array}{cc|ccc}
%        1_1 &   1_2   & 2  &  \cdots  & m \\ \hline \hline
        1     &0      & a_{12} & \cdots & a_{1,m } \\
        0     &1      & a_{22} & \cdots & a_{2,m } \\ \hline
        0     &0      &     &        &     \\
        \vdots&\vdots &     & A&     \\
        0     &0      &     &        &
        \end{array}
        \right)_{(n+1)\times (m+1)}
        \end{equation}
        whose columns are labeled by the vertices $1_1, 1_2, 2,\dotsc,m$. Then the projected characteristic map $\proj_{1_{1}}\lambda$ is given by the matrix $N_2$ and $\proj_{1_2}\lambda$ is given by $N_1$, where

        \[
            N_j=\left(
            \begin{array}{c|ccc}
    %        1   & 2  &  \cdots  & m \\ \hline\hline
            1     & a_{j2} & \cdots & a_{j,m } \\ \hline
            0     &     &        &     \\
            \vdots&     & A&     \\
            0     &     &        &
            \end{array}
            \right),
        \]
        for $j=1,2$. It is obvious that once the matrices $N_1$ and $N_2$ are fixed, then $\lambda$ is forced to be unique.

%        We have
%        \[
%            \wed_1(K)= (I \star \link_K\{1\}) \cup (\partial I \star (K\setminus\{1\}))
%        \]
%        where $I$ is the 1-simplex whose vertices are $1_1$ and $1_2$. Any maximal simplex $\sigma$ of $\wed_1(K)$ is contained in either $I \star \link_K\{1\}$ or $\partial I \star (K\setminus\{1\})$.
%        In first case, put $\sigma = \{1_1,1_2\}\cup J$ for some $J\in\link_K\{1\}$.
    \end{proof}

    Notice that $\link_{\wed_1(K)}1_1$ and $\link_{\wed_1(K)}1_2$ can be naturally identified with $K$. Therefore, Proposition~\ref{prop:chrwedge} implies for any topological toric manifold $M=M(\lambda)$ over $K$ with
    \[
    \lambda=\left(
        \begin{array}{c|ccc}
        1     & a_{2} & \cdots & a_{m } \\ \hline
        0     &     &        &     \\
        \vdots&     & A&     \\
        0     &     &        &
        \end{array}
    \right),
    \]
    the matrix
    \begin{equation}\label{eqn:canonicalmatrix}
    \left(
        \begin{array}{cc|ccc}
        1     &0      & a_{2} & \cdots & a_{m } \\
        0     &1      & a_{2} & \cdots & a_{m } \\ \hline
        0     &0      &     &        &     \\
        \vdots&\vdots &     & A&     \\
        0     &0      &     &        &
        \end{array}
    \right)
    \end{equation}
    defines a topological {toric} manifold over $\wed_{1}(K)$. We write the new characteristic map by $\wed_1(\lambda)$ and the corresponding topological toric manifold by $\wed_1(M)$. This is called the \emph{canonical extension} of $M$ in \cite{Ewa86} when $\wed_1(M)$ is a toric manifold. The notation $\lambda(J)$ and $M(J)$ is used in \cite{BBCG10} since their underlying complex is $K(J)$ in Section~\ref{sec:wedge}. Following \cite{Ewa86}, we call $\wed_1(\lambda)$ or $\wed_1(M)$ the \emph{canonical extension} or the \emph{trivial wedging}. Let us briefly introduce the notation $M(J)$ of \cite{BBCG10}. Let $M(\lambda)$ be a topological toric manifold over $K$ with vertex set $V(K)=[m]=\{1,\ldots,m\}$ and $J=(a_1,\ldots,a_m)\in\N^m$. Although $M(J)$ is defined for arbitrary $J$, we will do it only when $J=(1,\ldots,1,2,1\ldots,1)$, where the $k$-th entry of $J$ is 2. Then $K(J) = \wed_k (K)$ and the matrix
    \begin{equation}\label{eqn:mj}
    \lambda'=\left(\begin{array}{c|ccccccc}
        1 & 0 & \cdots & 0 & -1 & 0 & \cdots & 0 \\ \hline
        0 &  &  & & & & & \\
        \vdots &\lambda_1 & \cdots & & \lambda_k & & \cdots & \lambda_m \\
        0 & & & & & & & \\
    \end{array}\right),
    \end{equation}
    where $\lambda_i$ is the $i$-th column of $\lambda$, is a characteristic matrix for $\wed_{k}(K)$ with respect to the ordering of facets $k_1, 1,2,\ldots,k-1,k_2,{k+1},\ldots,m$, defining $M(J)$ over $K(J)$. Without loss of generality, we can assume $k=1$ and $\lambda_k=\lambda_1$ is the first coordinate vector $e_1$. Then the matrix above is re-written like the following:
    \begin{equation*}
    \left(
        \begin{array}{cc|ccc}
        1     &-1      & 0 & \cdots & 0 \\
        0     &1      & a_{2} & \cdots & a_{m } \\ \hline
        0     &0      &     &        &     \\
        \vdots&\vdots &     & A&     \\
        0     &0      &     &        &
        \end{array}
    \right).
    \end{equation*}
    Now adding the second row to the first one obtains the wanted result. This shows that $M(J)$ of \cite{BBCG10} can be obtained from consecutive trivial wedgings of a topological toric manifold $M$. This construction can be iterated for general $K(J)$, defining the characteristic map $\lambda(J)$ for $M(J)$.

    Proposition~\ref{prop:chrwedge} certainly extends for general $K(J)$ in place of $\wed_v(K)$. Let $V(K)=[m]$ and $J=(a_1,\dotsc,a_m)$ as before. Repeatedly applying Proposition~\ref{prop:chrwedge}, we reach a subcomplex of $K(J)$ naturally isomorphic to $K$. Let us describe the subcomplex. Let $\pi\colon V(K(J)) \to V(K)$ be the natural surjective map given by $i_k\mapsto i$ (actually this map induces a simplicial map between two simplicial complexes). Let $s$ be a section map $s\colon V(K)\to V(K(J))$ such that $\pi\circ s = id_{V(K)}$. It is obvious that the image of $s$ induces a subcomplex denoted by $K_s$, naturally isomorphic to $K$, reminding the definition of $K(J)$ in Section~\ref{sec:wedge}. Moreover, the subcomplex is the link of the simplex $V(K(J))\setminus \operatorname{im} s$ (observe this set does not contain any non-face as a subset). By Lemma~\ref{lem:projection}, we have the following:
    \begin{corollary}\label{cor:projectionofKJ}
        Let the setting be above and let $\Lambda$ be a {non-singular} characteristic map over $K(J)$. Then the projection of $\Lambda$ with respect to the simplex $V(K(J))\setminus \operatorname{im}s$ is non-singular for every section $s\colon V(K)\to V(K(J))$. Furthermore, $\Lambda$ is uniquely determined by projections of $\Lambda$ on $K_s$. If every such projection is D-J equivalent to a characteristic map $\lambda$ over $K \cong K_s$, then $\Lambda = \lambda(J)$.
    \end{corollary}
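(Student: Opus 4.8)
The plan is to establish all three assertions at once by induction on the number of wedges $r:=\sum_{i=1}^{m}(a_i-1)$, with Proposition~\ref{prop:chrwedge} doing the work at each inductive step and Lemma~\ref{lem:projection} supplying the non-singularity part. Throughout, for a section $s$ I write $\sigma_s:=V(K(J))\setminus\operatorname{im}s$. The first thing to check is that $\sigma_s$ is a face of $K(J)$: recalling from Section~\ref{sec:wedge} that every minimal non-face of $K(J)$ consists of \emph{all} copies of the vertices of some minimal non-face of $K$, whereas $\sigma_s$ omits one copy of each vertex, we see $\sigma_s$ contains no minimal non-face, so $\sigma_s\in K(J)$ and, as noted before the statement, $\link_{K(J)}\sigma_s=K_s\cong K$. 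Since $\Lambda$ is non-singular, $\{\Lambda(w)\mid w\in\sigma_s\}$ is part of a $\Z$-basis, so $\proj_{\sigma_s}\Lambda$ is defined; and $K(J)$ is a fan-like sphere by Proposition~\ref{prop:propofwedge}, so $(K(J),\Lambda)$ is complete. Lemma~\ref{lem:projection} then gives that $(K_s,\proj_{\sigma_s}\Lambda)$ is complete and non-singular, which is assertion~(1).

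For the remaining assertions I would induct on $r$, the case $r=0$ being trivial. For $r\ge1$, choose $k$ with $a_k\ge2$, let $J'$ be $J$ with its $k$-th entry lowered by one, and write $K(J)=\wed_v(K(J'))$ for some copy $v$ of $k$, with new vertices $v_1,v_2$. As $\{v_1,v_2\}$ is an edge of $K(J)$ and $\Lambda$ is non-singular, $\{\Lambda(v_1),\Lambda(v_2)\}$ is unimodular, so Proposition~\ref{prop:chrwedge} applies: $\proj_{v_1}\Lambda$ and $\proj_{v_2}\Lambda$ are non-singular characteristic maps over $\link_{K(J)}v_1\cong\link_{K(J)}v_2\cong K(J')$, and $\Lambda$ is determined by this pair. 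The essential bookkeeping observation is that the projection onto each $K_s$ factors through one of these two. Indeed, if $s(k)\ne v_1$ then $v_1\in\sigma_s$, and since $V(\link_{K(J)}v_1)=V(K(J))\setminus\{v_1\}$, the set $\sigma_s\setminus\{v_1\}$ is exactly $V(K(J'))\setminus\operatorname{im}s'$ for the corresponding section $s'$ of $K(J')$; performing the quotient by $\langle\Lambda(w)\mid w\in\sigma_s\rangle$ in two stages yields
\[
    \proj_{\sigma_s}\Lambda \;=\; \proj_{\sigma_s\setminus\{v_1\}}\bigl(\proj_{v_1}\Lambda\bigr),
\]
and $s\mapsto s'$ is a bijection onto all sections of $K(J')$; the same holds with $v_2$ in place of $v_1$. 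Since every section $s$ of $K(J)$ satisfies $s(k)\ne v_1$ or $s(k)\ne v_2$, the family $\{\proj_{\sigma_s}\Lambda\}_s$ is precisely the union of the families of $K$-projections of $\proj_{v_1}\Lambda$ and of $\proj_{v_2}\Lambda$. Applying the induction hypothesis to each of $\proj_{v_1}\Lambda$ and $\proj_{v_2}\Lambda$ over $K(J')$ then shows each of them is reconstructed from $\{\proj_{\sigma_s}\Lambda\}_s$ — hence so is $\Lambda$, by Proposition~\ref{prop:chrwedge}, giving assertion~(2) — and that if every $\proj_{\sigma_s}\Lambda$ is D-J equivalent to $\lambda$, then $\proj_{v_1}\Lambda$ and $\proj_{v_2}\Lambda$ are both D-J equivalent to $\lambda(J')$.

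The step I expect to be the main obstacle is the very last one, deducing $\Lambda\sim\lambda(J)$ from $\proj_{v_1}\Lambda\sim\lambda(J')\sim\proj_{v_2}\Lambda$: Proposition~\ref{prop:chrwedge} reconstructs $\Lambda$ from the two projections only once they are presented in a mutually compatible normal form (sharing the block $A$ of \eqref{eqn:matrixwedge}), so one cannot apply the two D-J equivalences independently. I would handle this by first changing basis so that $\Lambda(v_1)=e_1$ and $\Lambda(v_2)=e_2$, in which case $\proj_{v_1}\Lambda$ and $\proj_{v_2}\Lambda$ agree except in a single row — the $v_2$- and $v_1$-rows of \eqref{eqn:matrixwedge} — over a common block $A$. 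A further basis change fixing $e_1$ and $e_2$ can be chosen to make $\proj_{v_1}\Lambda$ literally equal to $\lambda(J')$; the identity witnessing $\proj_{v_2}\Lambda\sim\lambda(J')$ then forces the difference of the two rows to lie in the $\Z$-row span of $A$, and the residual freedom of adding $\Z$-combinations of the rows of $A$ to the $v_1$-row (which does not affect $\proj_{v_1}\Lambda$) lets one make the two rows coincide. At that point $\Lambda$ is the canonical extension $\wed_v(\lambda(J'))$ of \eqref{eqn:canonicalmatrix}, which is D-J equivalent to $\lambda(J)$ by independence of the order of wedgings, completing assertion~(3).

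An alternative route that sidesteps this explicit normalization would be to strengthen assertion~(2) to the claim that $\Lambda$ is determined \emph{up to D-J equivalence} by the D-J classes of its $K$-projections, and then simply note that $\lambda(J)$ is a non-singular characteristic map over $K(J)$ all of whose $K$-projections are D-J equivalent to $\lambda$. However, proving that strengthened uniqueness comes down to the same compatibility computation, so I would keep the direct argument above and regard that paragraph as the technical core of the proof.
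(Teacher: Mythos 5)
Your argument is correct and follows the same route the paper takes: the paper disposes of this corollary by citing Lemma~\ref{lem:projection} for non-singularity of the projections and by "repeatedly applying Proposition~\ref{prop:chrwedge}" to peel off one wedge at a time, exactly your induction on the number of wedges. The only difference is one of detail: you make explicit the factoring $\proj_{\sigma_s}\Lambda=\proj_{\sigma_s\setminus\{v_1\}}(\proj_{v_1}\Lambda)$ and, more substantially, the normalization showing that D-J equivalence of both projections to $\lambda(J')$ really does force $\Lambda$ to be the canonical extension — a compatibility point the paper leaves implicit, and which your computation with the shared block $A$ settles correctly.
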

    In particular, we obtain the following:
    \begin{corollary}
        The simplicial complex $K(J)$ admits a topological toric manifold if and only if $K$ does.
    \end{corollary}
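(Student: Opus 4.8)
The statement to prove is the final corollary: $K(J)$ admits a topological toric manifold if and only if $K$ does. The plan is to reduce everything to the single-wedge case already handled by Proposition~\ref{prop:chrwedge}, and then iterate, using the machinery of Corollary~\ref{cor:projectionofKJ}. Recall that "admits a topological toric manifold" means precisely that there exists a complete non-singular characteristic map on the given simplicial complex (a fan-like sphere), since by the classification of \cite{IFM12} such a characteristic map, together with any compatible $\mathbf{z}$ and an orientation of $|K|$, determines a topological toric manifold as a $T^n$-manifold. Note that $\wed_v(K)$ is a fan-like (equivalently, star-shaped) simplicial sphere whenever $K$ is, by Proposition~\ref{prop:propofwedge}, and by induction so is $K(J)$; hence the underlying-complex hypotheses are automatically satisfied.

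First I would treat the direction "$K$ admits $\Rightarrow$ $K(J)$ admits". Given a complete non-singular characteristic map $\lambda$ on $K$, form its trivial wedging (canonical extension) $\wed_1(\lambda)$ via the matrix \eqref{eqn:canonicalmatrix}; by the discussion following Proposition~\ref{prop:chrwedge}, this is a complete characteristic map on $\wed_1(K)$, and Proposition~\ref{prop:chrwedge} guarantees it is non-singular because both of its projections $\proj_{1_1}(\wed_1\lambda)$ and $\proj_{1_2}(\wed_1\lambda)$ equal $\lambda$, which is non-singular. Iterating this construction over the successive wedgings that build $K(J)$ from $K$ — in any order, since by Section~\ref{sec:wedge} the resulting complex $K(J)$ is independent of the order of wedgings — produces the characteristic map $\lambda(J)$ on $K(J)$, which is complete and non-singular. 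This direction is essentially immediate.

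The converse "$K(J)$ admits $\Rightarrow$ $K$ admits" is the substantive half, and it is where the work lies. Suppose $\Lambda$ is a complete non-singular characteristic map on $K(J)$. Choose any section $s\colon V(K)\to V(K(J))$ of the natural projection $\pi$; the image of $s$ spans a subcomplex $K_s\cong K$, and $K_s=\link_{K(J)}(V(K(J))\setminus\operatorname{im}s)$, where the removed vertex set is genuinely a face of $K(J)$ (it contains no minimal non-face, by the definition of $K(J)$ in Section~\ref{sec:wedge}). The point requiring care is the unimodularity hypothesis needed to apply the projection/Lemma~\ref{lem:projection} machinery: since $\Lambda$ is non-singular, the set $\{\Lambda(i)\mid i\in\sigma\}$ positively spans a non-singular cone for \emph{every} face $\sigma$ of $K(J)$, in particular for $\sigma=V(K(J))\setminus\operatorname{im}s$, so this set is unimodular and the projection $\proj_\sigma\Lambda$ is well defined. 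By Lemma~\ref{lem:projection}, $(\link_{K(J)}\sigma,\proj_\sigma\Lambda)=(K_s,\proj_\sigma\Lambda)$ is a complete non-singular characteristic map; transporting it along the isomorphism $K_s\cong K$ gives a complete non-singular characteristic map on $K$, hence $K$ admits a topological toric manifold. Concretely, one can also see this inductively: a single application of Proposition~\ref{prop:chrwedge} shows that if $\wed_v(K')$ admits a complete non-singular characteristic map then so does $K'$ (via either projection $\proj_{v_1}$ or $\proj_{v_2}$, which requires the unimodularity of $\{\Lambda(v_1),\Lambda(v_2)\}$, again automatic from non-singularity), and peeling off the wedgings one at a time from $K(J)$ down to $K$ completes the argument.

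The main obstacle — and the only place one must be slightly careful — is the unimodularity side condition appearing throughout (in the definition of projected characteristic maps, in Proposition~\ref{prop:chrwedge}, and in Corollary~\ref{cor:projectionofKJ}): one must observe that for a \emph{non-singular} $\Lambda$ this condition is free, since every face spans a non-singular, a fortiori unimodular, cone. Granting this, both implications follow mechanically from Proposition~\ref{prop:chrwedge}, Lemma~\ref{lem:projection}, and Corollary~\ref{cor:projectionofKJ}, with the only genuinely new ingredient being the observation (already in Proposition~\ref{prop:propofwedge}) that the fan-like sphere hypothesis propagates through wedging, so that both $K$ and $K(J)$ are legitimate carriers of topological toric manifolds.
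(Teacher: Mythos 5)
Your proof is correct and follows essentially the same route the paper intends: the paper states this corollary as an immediate consequence of Corollary~\ref{cor:projectionofKJ} (projection onto $K_s$ for the ``only if'' direction) together with the canonical extension construction \eqref{eqn:canonicalmatrix} (for the ``if'' direction), which is exactly your argument. Your added observation that non-singularity automatically supplies the unimodularity hypothesis needed for the projections is a correct and worthwhile clarification of a point the paper leaves implicit.
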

    Corollary~\ref{cor:projectionofKJ} shows that ones can find all topological toric manifolds over $K(J)$ provided they know every topological toric manifold over $K$. As we have seen, if $K$ is polytopal then so is $K(J)$ and this means that an analogue of the above corollary also holds for quasitoric manifolds. Every argument so far applies to the $\Z_2$-version of characteristic maps and thus real topological toric manifolds and small covers.

    Our next step is for fan-giving characteristic maps and their associated toric manifolds. If $(K,\lambda)$ is fan-giving, we can assume that $|K|$ is oriented such that $\lambda$ is a positive characteristic map.

    \begin{proposition}\label{prop:FanandWedge}
        Under the setting of Proposition~\ref{prop:chrwedge}, a complete characteristic map $\lambda$ is fan-giving if and only if $\proj_{v_1}\lambda$ and $\proj_{v_2}\lambda$ are fan-giving.
    \end{proposition}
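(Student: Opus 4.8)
The plan is to prove the two implications separately; the forward one is a one-liner and the converse carries all the content, which I would route through Proposition~\ref{prop:toddandfangiving} exactly as the paragraph before the statement hints. If $\lambda$ is fan-giving, then, since $\{\lambda(v_1),\lambda(v_2)\}$ being unimodular makes each of $\{\lambda(v_1)\}$ and $\{\lambda(v_2)\}$ unimodular, Lemma~\ref{lem:projection} applied to the vertices $v_1$ and $v_2$ gives at once that $\proj_{v_1}\lambda$ and $\proj_{v_2}\lambda$ are fan-giving. For the converse, assume both projections are fan-giving. First I would record that $\lambda$ is positive: each projection is positive (being fan-giving), so one orients $|\wed_v(K)|$ so that the orientation it induces on $\proj_{v_1}\lambda$ is the positive one, then checks that the induced orientation on $\proj_{v_2}\lambda$ is positive as well (both induced weights of a facet $\{v_1,v_2\}\cup\tau$ equal its weight for $\lambda$), and invokes Proposition~\ref{prop:chrwedge}. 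By Proposition~\ref{prop:toddandfangiving} it then remains to show $\operatorname{Todd}(\lambda)=1$, and I would in fact show $\operatorname{Todd}(\lambda)=\operatorname{Todd}(\proj_{v_1}\lambda)$, which is $1$ since $\proj_{v_1}\lambda$ is fan-giving.

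To compute the Todd genus I would set $v=1$, normalize as in the proof of Proposition~\ref{prop:chrwedge} so that $\lambda(v_1)=e_1$ and $\lambda(v_2)=e_2$, and use that the facets of $\wed_1(K)$ containing $v_1$ are exactly the sets $\{v_1\}\cup\sigma$ with $\sigma$ a facet of $\link_{\wed_1(K)}\{v_1\}\cong K$, while the remaining facets are the sets $\{v_2\}\cup\sigma$ with $\sigma$ a facet of $K$ not containing $1$. Identify $\R^n$ with $\R^{n+1}/\langle e_1\rangle$ by deleting the first coordinate, choose $w_1\in\R^n$ generic for $\proj_{v_1}\lambda$, let $\sigma_1$ be the unique facet of $\link_{\wed_1(K)}\{v_1\}$ whose $\proj_{v_1}\lambda$-cone contains $w_1$, and set $w_0=(N,w_1)\in\R^{n+1}$ with a large $N>0$; after a small generic perturbation $w_0$ is also generic for $\lambda$. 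The crux is the behaviour of $w_0$ for $N\gg 0$. On one hand, a facet $\{v_1\}\cup\sigma$ satisfies $w_0\in\angle\lambda_{\{v_1\}\cup\sigma}$ iff $\sigma=\sigma_1$: the quotient $\R^{n+1}\to\R^{n+1}/\langle e_1\rangle$ carries $\angle\lambda_{\{v_1\}\cup\sigma}$ onto $\angle(\proj_{v_1}\lambda)_\sigma$ and $w_0$ to $w_1$, giving ``only if'', and conversely, writing $w_1=\sum_s\beta_s\proj_{v_1}\lambda(s)$ with $\beta_s\geq 0$, the vector $w_0-\sum_s\beta_s\lambda(s)$ is the multiple $(N-\sum_s\beta_s(\lambda(s))_1)e_1$ of $e_1$, whose coefficient is $\geq 0$ once $N$ exceeds a bound depending only on $w_1$, so $w_0\in\angle\lambda_{\{v_1\}\cup\sigma_1}$. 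On the other hand, no facet $\{v_2\}\cup\sigma$ with $1\notin\sigma$ satisfies $w_0\in\angle\lambda_{\{v_2\}\cup\sigma}$: projecting along $e_2$ sends this cone onto a cone $\angle(\proj_{v_2}\lambda)_\sigma$ of the fan of $\proj_{v_2}\lambda$ which, as $v_1\notin\sigma$, does not contain the ray through $\proj_{v_2}\lambda(v_1)=e_1$ and hence is bounded away from that ray in angle, whereas the image of $w_0$ tends to that ray as $N\to\infty$.

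Putting these together, for $N$ large the facets of $\wed_1(K)$ whose $\lambda$-cone contains $w_0$ are exactly the $\{v_1\}\cup\sigma$ with $w_1\in\angle(\proj_{v_1}\lambda)_\sigma$, and by the orientation convention $w(I\cup\{v_1\})=w'(I)$ recalled after Lemma~\ref{lem:projection} each such facet has the same weight as the facet $\sigma$ of $K$ carries for $\proj_{v_1}\lambda$. Summing gives $\operatorname{Todd}(\lambda)=\sum_\sigma w'(\sigma)$ over the facets $\sigma$ with $w_1\in\angle(\proj_{v_1}\lambda)_\sigma$, which is $\operatorname{Todd}(\proj_{v_1}\lambda)$ evaluated at the generic vector $w_1$, hence $1$ since $\proj_{v_1}\lambda$ is fan-giving; together with the positivity of $\lambda$, Proposition~\ref{prop:toddandfangiving} concludes.

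The step I expect to be the main obstacle is the second assertion above, i.e.\ that the $v_2$-side contributes nothing. This is exactly where the hypothesis on $\proj_{v_2}\lambda$ enters, and it genuinely needs $\proj_{v_2}\lambda$ to be fan-giving rather than merely complete, since only for a true fan are the cones missing the ray of $v_1$ guaranteed to stay uniformly away from it — which is what lets the ``$N$ large'' perturbation annihilate all $v_2$-contributions. By contrast, the identification of the facets of $\wed_1(K)$ and the first assertion are routine computations with the explicit matrix form of $\lambda$ from Proposition~\ref{prop:chrwedge}.
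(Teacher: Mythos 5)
Your argument is correct, and it follows the paper's overall strategy --- dispose of the ``only if'' direction by Lemma~\ref{lem:projection}, establish positivity of $\lambda$ via Proposition~\ref{prop:chrwedge}, and reduce the ``if'' direction to showing $\operatorname{Todd}(\lambda)=1$ so that Proposition~\ref{prop:toddandfangiving} applies --- but it executes the key step by a genuinely different (and more self-contained) route. The paper computes the degree of $f_\lambda$ at a point in the image of the interior of a maximal simplex $\{v_1,v_2\}\cup\tau$ shared by the two open stars: it quotes Lemma~4.2 of \cite{IM12} to say that fan-givingness of $\proj_{v_i}\lambda$ forces $f_\lambda|_{\operatorname{St}\{v_i\}}$ to be an embedding of an $n$-disc for $i=1,2$, and since the two closed stars cover $\wed_v(K)$, such a point has a unique preimage. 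You instead evaluate the Todd genus at a generic vector $(N,w_1)$ pushed far in the $\lambda(v_1)$-direction, using fan-givingness of $\proj_{v_1}\lambda$ to isolate the unique contributing cone on the $v_1$-side and fan-givingness of $\proj_{v_2}\lambda$ (specifically the fan axiom, which keeps maximal cones not having $\pos\{\proj_{v_2}\lambda(v_1)\}$ as a generating ray at a uniform positive angular distance from that ray) to kill all contributions from the $v_2$-side. What the paper's route buys is brevity, at the price of outsourcing the real content to an external lemma; what yours buys is a completely explicit convex-geometric verification from the matrix normal form, a transparent account of exactly where each of the two hypotheses is used, and an explicit treatment of the positivity/orientation bookkeeping that the paper leaves implicit. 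The only places where a referee might ask you to add a line are (i) the observation that a fan-giving projection has pairwise distinct rays (so that $v_1\notin\sigma$ really does imply the ray of $\proj_{v_2}\lambda(v_1)$ is not a face of $\angle(\proj_{v_2}\lambda)_\sigma$), which follows from $f_{\proj_{v_2}\lambda}$ being a homeomorphism, and (ii) the remark that your two membership/non-membership claims are open conditions, so the small perturbation making $w_0$ generic does not disturb them; neither is a gap.
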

    \begin{proof}
        We only need to show the `if' part. Suppose the dimension of $\lambda$ is $n+1$. Then we have a map $f_\lambda\colon |\wed_v(K)|\to S^n$ as we defined in Subsection~\ref{subsec:charandmultifans}. By Lemma~4.2 of \cite{IM12}, the restriction $f_\lambda|_{\operatorname{St}\{v_i\}}$ $(i=1,2)$ is an embedding of an $n$-disc into $S^n$, where $\operatorname{St}\{v_i\}$ denotes the open star of $v_i$ in $\wed_v(K)$. Observe that $\operatorname{St}\{v_1\} \cap \operatorname{St}\{v_2\} $ contains a maximal simplex $\{v_1,\,v_2\}\cup\tau$, where $\tau$ is a maximal simplex of $\link_K\{v\}$. By combining these clues and the fact $\overline{\operatorname{St}\{v_1\}} \cup \overline{\operatorname{St}\{v_2\}}=\wed_v(K)$, one concludes that if $x\in\operatorname{int}|\{v_1,\,v_2\}\cup\tau|$, $y\in|\wed_v(K)|$, and $f_\lambda(x)=f_\lambda(y)$, then $y=x$. Apply Proposition~\ref{prop:toddandfangiving} to finish the proof.
    \end{proof}

    Combining Proposition~\ref{prop:chrwedge} and Proposition~\ref{prop:FanandWedge}, we obtain the main result which we restate below:
    \begin{theorem}
        Let $K$ be a fan-like simplicial sphere and $v$ a given vertex of $K$. Let  $(\wed_v(K), \lambda)$ be a characteristic map and let $v_1$ and $v_2$ be the two new vertices of $\wed_v(K)$ created from the wedging. {Let us assume that $\{\lambda(v_1),\lambda(v_2)\}$ is a unimodular set.} Then $\lambda$ is uniquely determined by the projections $\proj_{v_1}\lambda$ and $\proj_{v_2}\lambda$. Furthermore,
        \begin{enumerate}
            \item $\lambda$ is non-singular if and only if so are $\proj_{v_1}\lambda$ and $\proj_{v_2}\lambda$.
            \item $\lambda$ is positively oriented if and only if so are $\proj_{v_1}\lambda$ and $\proj_{v_2}\lambda$.
            \item $\lambda$ is fan-giving if and only if so are $\proj_{v_1}\lambda$ and $\proj_{v_2}\lambda$.
        \end{enumerate}
    \end{theorem}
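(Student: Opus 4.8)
The plan is to obtain the theorem by assembling the pieces already in place: Proposition~\ref{prop:chrwedge}, Proposition~\ref{prop:FanandWedge}, Lemma~\ref{lem:projection}, and the orientation-inheritance convention $w(I\cup\sigma)=w'(I)$ recorded just after Lemma~\ref{lem:projection}. Since all the substantive work has already been carried out there, the remaining task is mostly bookkeeping: matching each clause of the theorem to the correct earlier statement and supplying the two directions of each ``if and only if.''

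First I would dispatch the uniqueness assertion and part~(1) directly from Proposition~\ref{prop:chrwedge}. After a unimodular change of coordinates one may take $\lambda(v_1)=e_1$ and $\lambda(v_2)=e_2$, so that $\lambda$ has the block shape \eqref{eqn:matrixwedge}; then $\proj_{v_1}\lambda$ and $\proj_{v_2}\lambda$ are read off as the two matrices obtained by deleting the appropriate top row, and conversely $\lambda$ is forced to be the unique characteristic map with those projections. The non-singularity equivalence of~(1) is then the corresponding clause of Proposition~\ref{prop:chrwedge} for the ``if'' part (every maximal face of $\wed_v(K)$ contains $v_1$ or $v_2$, so unimodularity there reduces to unimodularity of one projection on a maximal face), and Lemma~\ref{lem:projection} for the ``only if'' part (any projection of a complete non-singular characteristic map is again complete and non-singular).

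Next, for part~(2), the ``if'' direction is the final sentence of Proposition~\ref{prop:chrwedge}. For the ``only if'' direction I would invoke the weight identity $w(I\cup\{v_i\})=w'(I)$: under the identification $\link_{\wed_v(K)}v_i\cong K$, every maximal face $I$ of $K$ corresponds to the maximal face $I\cup\{v_i\}$ of $\wed_v(K)$, so if all weights of $\lambda$ are $+1$ then so are all weights of each projection. Thus $\lambda$ is positive precisely when both projections are.

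Finally, part~(3) is Proposition~\ref{prop:FanandWedge}, and this is the step I expect to be the crux. The ``only if'' direction is immediate from Lemma~\ref{lem:projection}. The ``if'' direction is genuinely harder, because fan-givingness, unlike non-singularity or positivity, cannot be checked face by face: it requires ruling out global overlaps of the top-dimensional cones. The route, following Proposition~\ref{prop:FanandWedge}, is to reduce this to a degree computation via Proposition~\ref{prop:toddandfangiving}: having already established that $\lambda$ is positive (part~(2)), it suffices to prove $\operatorname{Todd}(\lambda)=1$, i.e.\ that $f_\lambda\colon|\wed_v(K)|\to S^n$ has degree one. This is extracted from three facts: $f_\lambda$ restricted to each closed star $\overline{\operatorname{St}\{v_i\}}$ is an embedded $n$-disc (Lemma~4.2 of \cite{IM12}, using fan-givingness of $\proj_{v_i}\lambda$); the two closed stars cover $\wed_v(K)$; and their intersection contains a maximal simplex $\{v_1,v_2\}\cup\tau$ with $\tau$ maximal in $\link_K\{v\}$. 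Counting preimages of a generic point of $S^n$ then yields exactly one, so $\operatorname{Todd}(\lambda)=1$ and $\lambda$ is fan-giving.
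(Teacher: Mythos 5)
Your proposal is correct and follows essentially the same route as the paper, which obtains the theorem precisely by combining Proposition~\ref{prop:chrwedge} (uniqueness, non-singularity, and positivity via the block matrix form \eqref{eqn:matrixwedge} and the weight convention $w(I\cup\sigma)=w'(I)$) with Proposition~\ref{prop:FanandWedge} (fan-givingness via the two embedded closed stars, their common maximal simplex, and the Todd genus criterion of Proposition~\ref{prop:toddandfangiving}). The only-if directions you supply from Lemma~\ref{lem:projection} and the orientation-inheritance remark are exactly the ones the paper treats as immediate.
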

    \begin{remark}
        An omnioriented quasitoric manifold admits an equivariant almost complex structure if and only if its (oriented) characteristic map is positively oriented by Kustarev \cite{Kus09}. Combining this with the above theorem we can classify equivariantly almost complex quasitoric manifolds over $\wed_F(P)$ provided we know all such manifolds over $P$. The authors think that an analogous result is also true for equivariantly almost complex topological toric manifolds.
        Note also that every toric manifold is a complex manifold with an equivariant complex structure and therefore admits an equivariant almost complex structure.
    \end{remark}

\section{Wedge operations and projectivity of toric varieties}\label{sec:wedgeandprojectivity}

    Let $X:=(x_1,\dotsc,x_m)\in (\R^n)^m$ be a finite sequence of vectors in $\R^n$ which linearly spans $\R^n$. Consider the space of linear dependencies of $X$, which is an $(m-n)$-dimensional space
    \[
        \{(\alpha_1,\dotsc,\alpha_m)\in \R^m \mid \alpha_1x_1+\dotsb+\alpha_mx_m = 0 \}.
    \]
    Choose one of its basis and write down them as rows of a matrix
    \[
        \begin{pmatrix}
            \alpha_{11} & & \cdots & & \alpha_{1m} \\
            \vdots & & & & \vdots \\
            \alpha_{m-n,1} & & \cdots & & \alpha_{m-n,m}
        \end{pmatrix}_{(m-n)\times m}=:(\overline{x}_1,\dotsc,\overline{x}_m).
    \]
    The sequence $\overline{X}$ of column vectors of this matrix is called a \emph{linear transform} of $X$. Note that Shephard \cite{She71} used the term \emph{linear representation}. A linear transform is not uniquely determined and it is defined up to basis change. When each vector is regarded as a column vector, the matrices $X=(x_1,\dotsc,x_m)$ and $\overline{X}=(\overline{x}_1,\dotsc,\overline{x}_m)$ have the relation $X\overline{X}^T = O$. Its transpose is again $\overline{X}X^T = O$. In other words, their row spaces are orthogonal to each other and span the entire space $\R^m$. Therefore if $\overline{X}$ is a linear transform of $X$, then $X$ is a linear transform of $\overline{X}$.

    For a subsequence $Y=(x_{i_1},\dotsc,x_{i_k})$ of $X$, one writes ${\overline{X}|_Y} :=(\overline{x}_{i_1},\dotsc,\overline{x}_{i_k})$.  If $X\setminus Y$ positively spans a face of the polyhedral cone $\pos X$, we say that $Y$ is a \emph{coface} of $X$. The following result is fundamental in the theory of Gale transform.
    \begin{theorem}\cite[Theorem~II.4.14]{Ewa96}\label{thm:cofacelinear}
        Let $\overline{X}$ be a linear transform of $X$ and $Y$ a subsequence of $X$. Then $Y$ is a coface of $X$ if and only if
        \[
            0 \in \relint \conv {\overline{X}|_Y}
        \]
        ($\relint$ = relative interior of, $\conv$ = convex hull of).
    \end{theorem}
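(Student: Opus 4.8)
The plan is to prove the equivalence by a short chain of linear-algebraic reformulations, using the orthogonality of the row spaces of $X$ and $\overline{X}$ (already observed above, since $X\overline{X}^{T}=O$ and the two row spaces together span $\R^m$) as the bridge. Write $Z:=X\setminus Y$, so that the assertion to be proved reads: $Z$ is exactly the set of members of $X$ lying on some face of $\pos X$ (equivalently, $Y$ is a coface) if and only if $0\in\relint\conv(\overline{X}|_Y)$.

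First I would recall the standard description of faces of a polyhedral cone: a subset $F\subseteq\pos X$ is a face precisely when there is a linear functional $a\in(\R^{n})^{\ast}$ with $\langle a,x\rangle\ge 0$ for every $x\in X$ and $F=\pos X\cap a^{\perp}$; in that case the members of $X$ lying on $F$ are exactly the $x_i$ with $\langle a,x_i\rangle=0$. Hence ``$Y$ is a coface'' is equivalent to the existence of $a$ with $\langle a,x_i\rangle=0$ for every $i$ with $x_i\in Z$ and $\langle a,x_i\rangle>0$ for every $i$ with $x_i\in Y$.

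Next I would pass to the vector $w:=(\langle a,x_1\rangle,\dots,\langle a,x_m\rangle)=X^{T}a$. As $a$ runs over $\R^{n}$, the vector $w$ runs over the row space of $X$; and because the row spaces of $X$ and $\overline{X}$ are orthogonal complements in $\R^{m}$, we have $w$ in the row space of $X$ if and only if $\overline{X}w=O$, that is, $\sum_i w_i\overline{x}_i=0$. So ``$Y$ is a coface'' becomes: there is $w\in\R^{m}$ with $\sum_i w_i\overline{x}_i=0$, with $w_i=0$ for all $i$ with $x_i\in Z$, and with $w_i>0$ for all $i$ with $x_i\in Y$. Deleting the coordinates indexed by $Z$ (which are forced to vanish) and rescaling, this says exactly that there are strictly positive coefficients $\mu_i$ ($i\in Y$) with $\sum_{i\in Y}\mu_i=1$ and $\sum_{i\in Y}\mu_i\overline{x}_i=0$, i.e.\ $0\in\relint\conv(\overline{X}|_Y)$. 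For the converse one starts from such $\mu_i$, extends them by $0$ over $Z$ to obtain a $w$ in the row space of $X$, writes $w=X^{T}a$, and observes that this $a$ is nonnegative on $X$, vanishes exactly on $Z$, and is positive on $Y$, so that $\pos X\cap a^{\perp}=\pos Z$ with $Z$ as its full set of generators; this is where the Farkas-type separation content of the statement enters, but it is automatic once $w$ is produced.

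The step needing genuine care is the bookkeeping concealed in the phrase ``positively spans a face'': in both directions one must make sure the supporting functional $a$ can be chosen to vanish on \emph{exactly} $Z$ and to be strictly positive on \emph{all} of $Y$, equivalently that $X\setminus Y$ is the entire collection of generators of $\pos X$ lying on the face rather than just a subset positively spanning it; otherwise redundant generators inside a face would break the equivalence. The degenerate instances $Y=\varnothing$ (the improper face $\pos X$) and $Z=\varnothing$ ($\{0\}$ a face, i.e.\ $\pos X$ pointed) are covered by the usual conventions for $\relint\conv\varnothing$. Granting this, each arrow in the chain is an elementary equivalence and the theorem follows.
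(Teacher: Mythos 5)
The paper offers no proof of this statement --- it is quoted directly from Ewald \cite[Theorem~II.4.14]{Ewa96} --- so there is nothing internal to compare your argument against; judged on its own, your proof is correct and is the standard Gale-duality argument. The three reformulations are each sound: faces of $\pos X$ are exactly the sets $\pos X\cap a^{\perp}$ for functionals $a$ nonnegative on $X$, with the members of $X$ on the face being those where $a$ vanishes; the value vector $w=X^{T}a$ ranges over the row space of $X$, which (as the paper notes) is the orthogonal complement of the row space of $\overline{X}$, i.e.\ $\{w:\sum_i w_i\overline{x}_i=0\}$; and a vector in that space vanishing on $Z$ and strictly positive on $Y$ normalizes to a witness that $0\in\relint\conv\overline{X}|_Y$, using the standard fact that for a finite sequence $S$ the set $\relint\conv S$ is exactly the set of convex combinations of \emph{all} members of $S$ with strictly positive coefficients. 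The caveat you isolate is the right one, and is worth stating even more bluntly: with the paper's literal phrasing ``$X\setminus Y$ positively spans a face,'' the theorem is actually false. Take $X=(1,1)$ in $\R^{1}$ and $Y=(x_1)$: then $X\setminus Y=(x_2)$ positively spans the improper face $\pos X=[0,\infty)$, yet $\overline{X}|_Y$ is the single nonzero point $1$, so $0\notin\relint\conv\overline{X}|_Y$. Ewald's actual definition --- which your chain of equivalences implicitly and correctly uses --- requires $X\setminus Y$ to be the \emph{entire} subsequence of members of $X$ lying on the face, and with that reading (plus the usual convention disposing of $Y=\varnothing$) every step of your argument goes through.
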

    An immediate corollary follows:
    \begin{corollary}\label{cor:coneandpospan}
        The set $\pos X$ is strongly convex if and only if $\overline{X}$ positively spans $\R^{m-n}$.
    \end{corollary}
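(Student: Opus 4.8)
The plan is to derive the corollary as the special case $Y = X$ of Theorem~\ref{thm:cofacelinear}, after recasting strong convexity as a statement about faces. First I would invoke the standard fact that a polyhedral cone $C$ is strongly convex if and only if $\{0\}$ is a face of $C$: a pointed polyhedral cone has $\{0\}$ as its minimal face, whereas a non-pointed cone contains a line, so its lineality space --- the smallest face --- contains nonzero vectors and hence $\{0\}$ is not a face. Applying this to $C = \pos X$ and recalling the convention $\pos\varnothing = \{0\}$, the condition ``$\pos X$ is strongly convex'' becomes ``the empty coface $X\setminus X = \varnothing$ positively spans a face of $\pos X$'', that is, ``$Y = X$ is a coface of $X$''.

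Next I would plug $Y = X$ into Theorem~\ref{thm:cofacelinear}. Since $\overline X|_X = (\overline x_1,\dotsc,\overline x_m)$, the theorem gives that $Y = X$ is a coface of $X$ if and only if $0 \in \relint\conv\{\overline x_1,\dotsc,\overline x_m\}$; combined with the previous paragraph, this identifies ``$\pos X$ strongly convex'' with ``$0 \in \relint\conv\{\overline x_1,\dotsc,\overline x_m\}$''. It then remains to verify the elementary equivalence $0 \in \relint\conv\{\overline x_1,\dotsc,\overline x_m\} \iff \pos\{\overline x_1,\dotsc,\overline x_m\} = \R^{m-n}$. Here I would use that $\overline x_1,\dotsc,\overline x_m$ span $\R^{m-n}$ --- the rows of $\overline X$ form a basis of the $(m-n)$-dimensional space of linear dependencies of $X$, so $\overline X$ has rank $m-n$ --- whence $\conv\{\overline x_i\}$ is full-dimensional and $\relint$ coincides with $\operatorname{int}$. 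If $0 \in \operatorname{int}\conv\{\overline x_i\}$, then $\pos\{\overline x_i\}$ contains a neighborhood of $0$ and, being a cone, is all of $\R^{m-n}$. Conversely, if $\pos\{\overline x_i\} \neq \R^{m-n}$, then this proper closed convex cone has a nonzero supporting functional $c$ with $\langle c,\overline x_i\rangle \ge 0$ for every $i$, so $\conv\{\overline x_i\}$ lies in the halfspace $\{x : \langle c,x\rangle \ge 0\}$ and $0$ is not an interior point of it.

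I do not expect a serious obstacle: the only delicate bookkeeping is the passage between $\relint$ and $\operatorname{int}$ in the last equivalence, which rests entirely on the fact --- recorded just before the statement --- that the linear transform $\overline X$ spans $\R^{m-n}$. As an alternative that avoids Theorem~\ref{thm:cofacelinear} altogether, one may argue directly: $\pos X$ fails to be strongly convex precisely when there is a nonzero $(a_1,\dotsc,a_m)$ with all $a_i \ge 0$ and $\sum_i a_i x_i = 0$; since every linear dependency of $X$ is of the form $a_i = \langle c, \overline x_i\rangle$ for some $c \in \R^{m-n}$, this says the dual cone of $\pos\{\overline x_i\}$ is nonzero, which by cone duality ($C^{**} = C$ for closed convex cones) is equivalent to $\pos\{\overline x_i\} \neq \R^{m-n}$.
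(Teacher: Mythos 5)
Your overall route is the paper's: the printed proof is a one-line appeal to Theorem~\ref{thm:cofacelinear} with the roles of $X$ and $\overline{X}$ exchanged, and your first two paragraphs spell out exactly that reduction (strong convexity $\Leftrightarrow$ $\{0\}$ is a face $\Leftrightarrow$ $Y=X$ is a coface $\Leftrightarrow$ $0\in\relint\conv\overline{X}$). One justification in your last step is wrong, however: the fact that $\overline{x}_1,\dotsc,\overline{x}_m$ \emph{linearly} span $\R^{m-n}$ does not make $\conv\{\overline{x}_i\}$ full-dimensional, so you cannot identify $\relint$ with $\operatorname{int}$ a priori. For instance $X=(1,-1)$ in $\R^1$ has linear transform $\overline{X}=(1,1)$: the columns span $\R^1$, yet their convex hull is the single point $\{1\}$, whose relative interior is $\{1\}$ while its interior is empty --- and this degenerate situation is exactly the one that arises when $\pos X$ fails to be strongly convex, so it cannot be brushed aside. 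The equivalence you want is still true, but the logic runs the other way: if $0\in\relint\conv\{\overline{x}_i\}$, then $0$ lies in the affine hull, which is therefore a linear subspace containing a spanning set and hence all of $\R^{m-n}$, so full-dimensionality (and $\relint=\operatorname{int}$) is a consequence of $0\in\relint$ rather than a standing hypothesis; for the converse, if $c\neq 0$ satisfies $\langle c,\overline{x}_i\rangle\ge 0$ for all $i$, then writing $0$ as a convex combination of the $\overline{x}_i$ with strictly positive coefficients would force $\langle c,\overline{x}_i\rangle=0$ for all $i$, contradicting linear spanning, so $0\notin\relint\conv\{\overline{x}_i\}$. Alternatively, the cone-duality argument in your final paragraph is clean and sidesteps the issue entirely; with that (or the patch above) in place the proof is complete and agrees with the paper's.
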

    \begin{proof}
        Exchange the roles of $X$ and $\overline{X}$ and then a proof is straightforward once one notices that $\pos\overline{X}$ has no proper face.
    \end{proof}
    We need also the next lemma.
    \begin{lemma}\label{lem:hyperplaneandzerosum}
        A linear transform $\overline{X}$ of $X$ satisfies $\overline{x}_1+\dotsb+\overline{x}_m = 0 $ if and only if the points $x_i$ lie in a hyperplane $H$ of $\R^n$ for which $0 \notin H$.
    \end{lemma}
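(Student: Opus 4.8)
The plan is to reduce both sides of the equivalence to a single statement about the all-ones vector $\mathbf{1}=(1,\dotsc,1)\in\R^m$, and then read it off from the orthogonality between $X$ and $\overline{X}$ recorded just above the statement.

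First I would regard $\overline{X}=(\overline{x}_1,\dotsc,\overline{x}_m)$ as the $(m-n)\times m$ matrix having the $\overline{x}_i$ as columns, so that $\overline{x}_1+\dotsb+\overline{x}_m=0$ is precisely $\overline{X}\mathbf{1}^T=0$, i.e.\ $\mathbf{1}$ lies in the kernel of the linear map $\R^m\to\R^{m-n}$ given by $\overline{X}$. Now $\ker\overline{X}$ is the orthogonal complement of the row space of $\overline{X}$; by definition of a linear transform the row space of $\overline{X}$ is the space $\ker X$ of linear dependencies of $X$, and, as observed just before the statement (equivalently, from $X\overline{X}^T=O$ together with $\dim\ker X+\dim(\text{row space of }X)=m$ and the matching dimension $n$ on both sides), the row space of $X$ is exactly that orthogonal complement. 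Hence $\ker\overline{X}$ coincides with the row space of $X$, so the condition $\sum_i\overline{x}_i=0$ is equivalent to: $\mathbf{1}$ belongs to the row space of $X$, i.e.\ there exists $c\in\R^n$ with $c^TX=\mathbf{1}^T$, that is $\langle c,x_i\rangle=1$ for every $i$.

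It remains to identify this last condition with the geometric one. If such a $c$ exists, then $c\ne 0$ (since $X$ spans $\R^n$, $c^TX=\mathbf{1}^T\ne 0$ forces $c\ne 0$), and all the $x_i$ lie on the hyperplane $H=\{y\in\R^n\mid\langle c,y\rangle=1\}$, which does not contain the origin. Conversely, any hyperplane $H\subset\R^n$ with $0\notin H$ is the zero locus of an affine functional with nonzero constant term, hence after rescaling has the form $\{y\mid\langle c,y\rangle=1\}$ for some $c\ne 0$; if all $x_i\in H$ then $\langle c,x_i\rangle=1$ for all $i$, which is the condition above. This yields both directions.

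I do not expect any real obstacle: the argument is a direct unwinding of definitions. The one point deserving care --- and the only place the hypothesis $0\notin H$ enters --- is the translation between ``$\mathbf{1}$ lies in the row space of $X$'' and ``the $x_i$ lie in a common hyperplane avoiding the origin'': the nonzero constant term $1$ in the equation of $H$ is exactly what corresponds to the (nonzero) entries of $\mathbf{1}$, and replacing it by $0$ would instead describe a hyperplane through the origin. The identification $\ker\overline{X}=\text{row space of }X$ is nothing but the orthogonality relation $X\overline{X}^T=O$ combined with the dimension count, both already available.
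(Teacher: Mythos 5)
Your proof is correct, and it takes a genuinely different (dual) route from the paper's. The paper reads the hypothesis $\overline{x}_1+\dotsb+\overline{x}_m=0$ as saying that every row of $\overline{X}$, i.e.\ every linear dependency of $X$ in the chosen basis, is an \emph{affine} relation; it then passes to the differences $y_i=x_i-x_m$, observes that the $m-n$ independent affine relations give $m-n$ independent relations among the $m-1$ vectors $y_i$, so $\operatorname{span}\{y_i\}$ has dimension $n-1$, and takes $H=x_m+\operatorname{span}\{y_i\}$. You instead read the same hypothesis as $\mathbf{1}\in\ker\overline{X}$ and use the orthogonality $X\overline{X}^T=O$ plus the rank count to identify $\ker\overline{X}$ with the row space of $X$, producing a functional $c$ with $\langle c,x_i\rangle=1$ for all $i$ and hence the hyperplane $\{\langle c,\cdot\rangle=1\}$. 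The payoff of your version is that the condition $0\notin H$ is completely transparent --- it is literally the nonzero constant term $1$ --- whereas in the paper's construction this point is left implicit (one must still observe that a hyperplane through the origin containing all $x_i$ would contradict that $X$ linearly spans $\R^n$). The paper's version, on the other hand, constructs $H$ concretely as an affine translate and reverses more visibly for the converse. Both arguments are elementary and complete; yours is essentially the standard Gale-duality phrasing of the same fact.
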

    \begin{proof}
        Let $a_1,\dotsc,a_m\in\R$ be a linear relation such that
        \[
            a_1x_1+\dotsb+a_mx_m = 0
        \]
        and
        \[
            a_1+\dotsb+a_m = 0.
        \]
        Such a linear relation is called an \emph{affine relation}. Put $y_i := x_i - x_m$, $i=1,\dotsc,m-1$. Then $a_m = -a_1-\dotsb-a_{m-1}$ and
        \[
            a_1y_1+\dotsb+a_{m-1}y_{m-1} = 0.
        \]
        There are $m-n$ such relations and thus $\operatorname{span}\{y_i\}$ has dimension $n-1 = m-1-(m-n)$. We obtain the hyperplane $H$ by translating $\operatorname{span}\{y_i\}$ by $x_m$. The converse is proven by reversing all the argument.
 %       Hence the vectors $y_i$'s satisfy the following equation
%        \[
%            \begin{pmatrix}
%                \overline{x}_1 & \cdots & \overline{x}_{m-1}
%            \end{pmatrix}
%            \begin{pmatrix}
%                y_1^T \\
%                \vdots \\
%                y_{m-1}^T
%            \end{pmatrix} = O.
%        \]
%        But the matrix $(\overline{x}_1,\dotsc,\overline{x}_{m-1})$ has rank $m-n$
    \end{proof}
    Note that one can assume that $H$ is the hyperplane of points whose last coordinate is 1 since we can take $(1,\dotsc,1)$ for a linear dependency of $\overline{X}$. In general, for any strongly convex cone $C$, there is a hyperplane $H$ which does not intersect the origin and $C\cap H = P$ is a convex polytope which has the same face poset with $C$. Now we are ready to define the Gale transform.
    \begin{definition}
        Let $X=(x_1,\dotsc,x_m)\in(\R^n)^m$ be a sequence of points affinely spanning $\R^n$ and $f\colon\R^n \hookrightarrow \R^{n+1}$ be an embedding defined by $f(v)=(v,1)$. Then a \emph{Gale transform} of $X$, denoted by $X'=(x'_1,\dotsc,x'_m)$, is a linear transform of $f(X)$ in $\R^{n+1}$, which is defined in $\R^{m-n-1}$.
    \end{definition}
    The set $P=\conv X\subset \R^n$ is surely an $n$-polytope. In some sense $X$ can be regarded as the ``vertex set'' of $P$, even though $X$ can have a multiple point or a point on the relative interior of a face of $P$. In the latter case, the point does not represent a vertex of $P$ and we call it a \emph{ghost vertex} of $P$. If $X$ is indeed the vertex set of $P$, then we call $X'$ a Gale transform of $P$. Corollary~\ref{cor:coneandpospan} and Lemma~\ref{lem:hyperplaneandzerosum} implies that $X'$ positively spans $\R^{m-n-1}$ and $x'_1+\dotsb+x'_m = 0 $. The term coface is used analogously. We sometimes say that $Y$ is a coface of $P=\conv X$ if $\conv(X\setminus Y)$ is a face of $P$. An analogue of Theorem~\ref{thm:cofacelinear} for convex polytopes is as follows, which is very useful to study polytopes when $m-n$ is small (at most 4).
    \begin{corollary}\label{cor:galeforpolytopes}
        Let $P = \conv X$ be an $n$-polytope with $m$ vertices. Then the subsequence $Y$ of $X$ is a coface of $P$ if and only if
        \[
            0\in \relint \conv {X'|_Y}.
        \]
        The polytope $P$ is simplicial if and only if $\conv X'|_Y$ is a simplex of dimension $m-n-1$ for all minimal coface $Y$ of $P$.
    \end{corollary}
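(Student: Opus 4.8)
The plan is to deduce both statements from Theorem~\ref{thm:cofacelinear}, applied not to $X$ itself but to its lift $\widehat{X} := f(X) = ((x_1,1),\dots,(x_m,1)) \in (\R^{n+1})^m$, of which $X'$ is by definition a linear transform.

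For the first statement I would begin by recording the standard correspondence between the faces of $P = \conv X$ and the faces of the cone $C := \pos\widehat X \subset \R^{n+1}$ (the cone over $P$): a proper face $F$ of $P$ corresponds to the face $\pos f(F)$ of $C$, which is positively spanned by $\{f(x)\mid x\in X\cap F\}$. Since $X$ is the vertex set of $P$ (no ghost vertices), a vertex of $P$ lying in $F$ is a vertex of $F$, so $X\cap F = V(F)$ and $\pos f(F) = \pos f(V(F)) = \pos f(X\setminus Y)$ whenever $V(F) = X\setminus Y$. Hence $Y$ is a coface of $P$ (i.e.\ $\conv(X\setminus Y)$ is a face of $P$) if and only if $f(X\setminus Y) = \widehat X\setminus\widehat Y$ positively spans a face of $C$, i.e.\ $Y$ is a coface of $\widehat X$ in the cone sense. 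Theorem~\ref{thm:cofacelinear}, applied to $\widehat X$ and its linear transform $X'$, then says this holds if and only if $0\in\relint\conv X'|_Y$.

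For the second statement, note that $Y\mapsto F := \conv(X\setminus Y)$ is an inclusion-reversing bijection between cofaces of $P$ and proper faces of $P$, so the minimal cofaces are exactly the complements $X\setminus V(F)$ of the facets $F$. Since $P$ is simplicial iff every facet is a simplex, and a facet, being $(n-1)$-dimensional, is a simplex iff it has exactly $n$ vertices, $P$ is simplicial iff $|X\setminus Y| = n$, i.e.\ $|Y| = m-n$, for every minimal coface $Y$. So it suffices to show that for a minimal coface $Y$ the set $\conv X'|_Y$ is always a simplex (necessarily of dimension $|Y|-1$), which turns ``$\conv X'|_Y$ is a simplex of dimension $m-n-1$'' into ``$|Y| = m-n$''. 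This is the only point needing a genuine argument: by the first statement $0\in\relint\conv X'|_Y$; if $X'|_Y$ were affinely dependent, choose a nontrivial affine dependency (some coefficient is negative, since the coefficients sum to $0$) and add a suitable multiple of it to a strictly positive convex combination expressing $0$, stopping at the first parameter value where some coefficient hits $0$ and the rest are still $\ge 0$; this exhibits $0\in\conv X'|_{Y'}$ for a proper subset $Y'\subsetneq Y$, hence $0\in\relint\conv X'|_{Y''}$ for a minimal $Y''\subseteq Y'$ with $0$ in its hull, so $Y''$ is a strictly smaller coface, contradicting minimality of $Y$. Therefore $X'|_Y$ is affinely independent and $\conv X'|_Y$ is a $(|Y|-1)$-simplex.

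I expect the main obstacle to be bookkeeping rather than anything deep: the care needed to pin down that cofaces correspond bijectively and inclusion-reversingly to proper faces (so that minimal cofaces are the complements of facets), the use of the no-ghost-vertex hypothesis to identify $X\cap F$ with $V(F)$, and the short perturbation argument that a minimal positively dependent set of vectors is affinely independent. Everything else is immediate once Theorem~\ref{thm:cofacelinear} is invoked for the lift $\widehat X$.
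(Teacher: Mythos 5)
Your proposal is correct and follows essentially the same route as the paper: the first assertion is exactly Theorem~\ref{thm:cofacelinear} applied to the lift $f(X)$ (which is what the paper means by ``already shown''), and your perturbation argument showing that a minimal coface is affinely independent is precisely the proof of Carath\'{e}odory's theorem, which the paper simply cites at the corresponding step before matching cardinalities the same way you do.
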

    \begin{proof}
        The first assertion has been already shown. Suppose that $P$ is a simplicial $n$-polytope with $m$ vertices. Then each maximal face of $P$ has $n$ vertices and therefore for each minimal coface $Y$, ${X'|_Y}$ has cardinality $m-n$ in $\R^{m-n-1}$. Suppose that $\conv {X'|_Y}$ is not a simplex. Then its dimension is less than $m-n-1$. Recall that the famous Carath\'{e}odory's theorem states that if a point $v \in \R^d$ lies in $\conv S$ for a set $S$, then there is a subset $T$ of $S$ consisting of $d+1$ or fewer points such that $v \in \conv T$. Therefore one can apply Carath\'{e}odory's theorem to see that $Y$ is not minimal, which is a contradiction. The converse is immediate since every coface of $P$ has cardinality $m-n$.
    \end{proof}
        In general, for any sequence $X'=(x'_1,\dotsc,x'_m)$ of points of $\R^{m-n-1}$, one can define the poset which consists of subsequences of the form $X\setminus Y$, where $X=(x_1,\dotsc,x_m)$ is a sequence of symbols and $Y$ is a subsequence of $X$ such that $0\in\relint \conv {X'|_Y}$. If this poset coincides with a face poset of a polytope $P$, then we call $X'$ a \emph{Gale diagram} of $P$. By definition, a Gale transform is a Gale diagram. Two Gale diagrams are called \emph{isomorphic} if they share the same face poset. We will see an application of Gale transform in next section. We would need a lemma there, so we introduce it here. For a polytope $P$ and its given vertex $x$, recall that a \emph{vertex figure} of $P$ at $x$ is a polytope defined by $P\cap H$ where $H$ is a hyperplane separating $x$ from the other vertices of $P$. A vertex figure is uniquely determined up to combinatorial equivalence of polytopes, so let us call its equivalence class ``the'' vertex figure at $x$.
    \begin{lemma}\label{lem:GaleandVertexfigure}
        Let $X=(x_1,\dotsc,x_m)$ be a sequence of points and let $X' = (x'_1,\dotsc,x'_m)$ be a Gale diagram of the simplicial polytope $P^* = \conv X$. Then the subsequence
        \[
            X'' = (x'_1,x'_2,\dotsc,x'_{i-1},x'_{i+1},\dotsc,x'_m)
        \]
        is a Gale diagram for the vertex figure of $P^*$ at $x_i$.
    \end{lemma}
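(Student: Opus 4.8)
The plan is to prove the statement straight from the definition of \emph{Gale diagram} recalled just before the lemma: I would check that the poset which $X''$ defines coincides with the face poset of the vertex figure $Q$ of $P^*$ at $x_i$. Note first that the dimensions are consistent: $Q$ is an $(n-1)$-polytope with at most $m-1$ vertices, so its Gale diagrams lie in $\R^{(m-1)-(n-1)-1}=\R^{m-n-1}$, which is exactly the ambient space of $X'$, hence of $X''$.

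The first step is to record the standard combinatorial description of the vertex figure. Fix a hyperplane $H$ separating $x_i$ from the remaining $x_j$. Then $F\mapsto F\cap H$ is a dimension-lowering isomorphism from the poset of faces of $P^*$ containing $x_i$ onto the face poset of $Q=P^*\cap H$. Since $P^*$ is simplicial, each such $F$ equals $\conv\{x_j\mid j\in S\}$ for a unique $S\ni i$, and $F\cap H$ is the simplex with vertices $\conv\{x_i,x_j\}\cap H$, $j\in S\setminus\{i\}$. Labelling the vertex of $Q$ coming from the edge $\{x_i,x_j\}$ by $j$, and regarding each $x_j$ not joined to $x_i$ by an edge of $P^*$ as a ghost vertex of $Q$, this says: for $S\subseteq\{1,\dotsc,m\}\setminus\{i\}$, the set $S$ is a face of $Q$ if and only if $S\cup\{i\}$ is a face of $P^*$. (In particular $Q$ is simplicial, as it must be.)

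The decisive --- and essentially only --- step is the coface bookkeeping. Fix $S\subseteq\{1,\dotsc,m\}\setminus\{i\}$ and put $Z:=(\{1,\dotsc,m\}\setminus\{i\})\setminus S$, so that $i\notin Z$. On the one hand, $S$ is a face of $Q$ iff $S\cup\{i\}$ is a face of $P^*$ iff its complementary coface $\{1,\dotsc,m\}\setminus(S\cup\{i\})=Z$ satisfies $0\in\relint\conv X'|_Z$, because $X'$ is a Gale diagram of $P^*$ (this is the defining property of a Gale diagram; see also Corollary~\ref{cor:galeforpolytopes}). On the other hand, $S$ is a face of the polytope encoded by $X''$ iff $0\in\relint\conv X''|_Z$. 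But $i\notin Z$, so $X''|_Z=X'|_Z$: deleting the single column $x'_i$ from $X'$ does not affect any restriction indexed by a subset avoiding $i$. Hence the two conditions are literally the same; with the usual order-reversing face/coface duality $S\leftrightarrow Z$, the two posets agree, and therefore $X''$ is a Gale diagram of $Q$.

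The one place that needs genuine care is the first step: pinning down the face poset of $Q$ in terms of the index set $\{1,\dotsc,m\}\setminus\{i\}$, and in particular being consistent about keeping \emph{all} of these indices (allowing ghost vertices for the non-neighbours of $x_i$) --- this is exactly why one deletes precisely the one point $x'_i$ and nothing more. Once that identification is fixed, everything else is the trivial but crucial remark that each coface of $P^*$ visible in the face poset of $Q$ avoids the index $i$, so passing from $X'$ to $X''$ leaves all the relevant $\relint\conv$-conditions untouched.
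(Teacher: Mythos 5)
Your proposal is correct and follows essentially the same route as the paper: identify the face poset of the vertex figure with $\link_{K}\{x_i\}$ (i.e., $S$ is a face of the vertex figure iff $S\cup\{i\}$ is a face of $P^*$), then check that the face poset determined by $X''$ agrees with it. The paper dismisses that last check as ``immediate''; your coface bookkeeping --- observing that every relevant coface $Z$ avoids the index $i$, so $X''|_Z = X'|_Z$ --- is exactly the verification being elided, carried out carefully with the ghost-vertex convention made explicit.
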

    \begin{proof}
        Observe that the vertex figure at $x_i$ has boundary complex $\link_{K}\{x_i\}$. It is immediate to see that the face complex of $X''$ and the complex $\link_{K}\{x_i\}$ are the same.
    \end{proof}
    In the language of simple polytopes, the vertex figure at $x_i$ corresponds to the facet of $(P^*)^* = P$ dual to $x_i$. Note that the lemma above does not hold for general face figures because it is different from the link. Throughout this paper, we consider only Gale diagrams for simplicial polytopes. When we say about a Gale diagram for a simple polytope $P$, then actually it means a Gale diagram for the dual polytope $P^*$ which is simplicial.

    Returning for projectivity of fans, we consider an ``inverse'' of the Gale transform in some sense. By Corollary~\ref{cor:coneandpospan}, any linear transform of a positively spanning sequence is a strongly convex cone. Let $X=(x_1,\dotsc,x_m)$ be a sequence positively spanning $\R^n$. Then by Corollary~\ref{cor:coneandpospan}, the set $\pos\overline{X}$ is a strongly convex cone $C$. Let $H$ be any hyperplane such that $H\cap C$ is an $(m-n-1)$-polytope $\widehat{P}$. For each $\overline{x}_i \in \overline{X}$ consider the ray $r(\overline{x}_i) = \{a\overline{x}_i\mid a>0\}$ and let this ray meet $H$ in $\widehat{x}_i$. Then the sequence $\widehat{X}:=(\widehat{x}_1,\dotsc,\widehat{x}_m)$ in $H$ is called a \emph{Shephard diagram} of $X$. Observe that the inverse operation of the Gale transform gives a Shephard diagram of $X$. The Shephard diagram $\widehat{X}$ is independent of the choice of $x_i$'s. To see this, let $a_i$ be nonzero reals for $i=1,\dotsc,m$ and observe that a linear transform of $(a_1x_1,\dotsc,a_mx_m)$ is $(a_1^{-1}\overline{x}_1,\dotsc,a_m^{-1}\overline{x}_m)$. The next theorem indicates the relation between Gale diagrams and Shephard diagrams.
    \begin{theorem}\cite{She71}\label{thm:shepharddiagram}
        Let $\mu:=(\mu_1,\dotsc,\mu_m)$ be a vector of positive real numbers and put $P_\mu:=\conv (\mu_1x_1,\dotsc,\mu_mx_m)$. Then $\widehat{X}$ is a Gale diagram of $P_\mu$ if we select some suitable point $z\in H$ as origin.
        Conversely, if we select any $z\in \operatorname{int}\conv \widehat{X}$ for the origin, then $\widehat{X}$ is a Gale diagram of $P_\mu$ for some $\mu$.
    \end{theorem}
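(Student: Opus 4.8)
The plan is to write down an explicit Gale transform of $P_\mu$ and to compare it, vector by vector, with the Shephard diagram $\widehat X$. Two elementary stability properties of the notion of a Gale diagram will handle all the bookkeeping: since $0\in\relint\conv S$ holds exactly when $0$ is a convex combination of $S$ with every coefficient strictly positive, the face poset attached to a point configuration in $\R^{m-n-1}$ is unchanged both under a global linear automorphism (which fixes $0$) and under multiplying each individual vector by its own positive scalar. So it suffices to exhibit a Gale transform of $P_\mu$ that agrees with $\widehat X$ up to these two operations, for a suitable choice of the origin $z\in H$ (resp.\ for a suitable $\mu$).

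I would fix a choice of the linear transform $\overline X$, regarded as an $(m-n)\times m$ matrix, and use $X\overline X^{T}=O$, so that the columns of $\overline X^{T}$ form a basis of $\ker X\subseteq\R^{m}$. Put $D=\operatorname{diag}(\mu_1,\dots,\mu_m)$. Since $X$ positively spans $\R^n$, so does $(\mu_1x_1,\dots,\mu_mx_m)$, and a positively spanning sequence lies in no affine hyperplane, so it affinely spans $\R^n$; hence a Gale transform of $P_\mu$ is any basis of the kernel of $\bigl(\begin{smallmatrix}XD\\ \mathbf 1^{T}\end{smallmatrix}\bigr)$, a space of dimension $m-n-1$. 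A direct computation shows that $v$ lies in this kernel if and only if $v=D^{-1}\overline X^{T}u$ for some $u\in\R^{m-n}$ with $\langle u,w_\mu\rangle=0$, where $w_\mu:=\overline X D^{-1}\mathbf 1=\sum_{i}\mu_i^{-1}\overline x_i$. Choosing a matrix $U$ whose columns form a basis of the hyperplane $w_\mu^{\perp}\subseteq\R^{m-n}$, the rows of $U^{T}\overline X D^{-1}$ form such a basis; equivalently, the sequence
\[
    g_i \;=\; \mu_i^{-1}\,U^{T}\overline x_i \qquad (i=1,\dots,m)
\]
is a Gale transform — hence a Gale diagram — of $P_\mu$.

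Next I would compare $g_i$ with $\widehat x_i$. Writing $H=\{v\mid\langle\ell,v\rangle=1\}$, the definition of the Shephard diagram forces $\lambda_i:=\langle\ell,\overline x_i\rangle>0$ and $\widehat x_i=\overline x_i/\lambda_i$, so $g_i=\mu_i^{-1}\lambda_i\,U^{T}\widehat x_i$; as $\mu_i^{-1}\lambda_i>0$, the sequence $(U^{T}\widehat x_i)_i$ is again a Gale diagram of $P_\mu$. Moreover $\ker U^{T}=\R w_\mu$, and $\langle\ell,w_\mu\rangle=\sum_i\mu_i^{-1}\lambda_i>0$ shows that the line $\R w_\mu$ meets $H$ in a single point $z_\mu$, so $U^{T}|_{H}\colon H\to\R^{m-n-1}$ is an affine isomorphism carrying $z_\mu$ to $0$. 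Thus $U^{T}\widehat x_i$ is a fixed linear isomorphism applied to $\widehat x_i-z_\mu$, and therefore $\widehat X$ with origin $z_\mu$ is a Gale diagram of $P_\mu$; this is the first assertion, with $z=z_\mu$. For the converse, observe that $z_\mu$ is the point of $H$ on the ray through $w_\mu=\sum_i\mu_i^{-1}\overline x_i$; as $(\mu_1,\dots,\mu_m)$ ranges over all positive vectors, $w_\mu$ ranges over $\relint(\pos\overline X)$ (the cone $\pos\overline X$ being strongly convex by Corollary~\ref{cor:coneandpospan}), so $z_\mu$ ranges over $\relint(\pos\overline X\cap H)=\operatorname{int}\conv\widehat X$. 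Concretely, given $z\in\operatorname{int}\conv\widehat X$ one may write $z=\sum_i t_i\overline x_i$ with all $t_i>0$; taking $\mu_i:=t_i^{-1}$ yields $w_\mu=z$, hence $z_\mu=z$, and the first part applies.

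The argument contains no deep idea, and the anticipated difficulty is purely organizational: keeping a clean distinction between a \emph{Gale transform} (the specific null-space of $\bigl(\begin{smallmatrix}XD\\ \mathbf 1^{T}\end{smallmatrix}\bigr)$, defined only up to a global basis change) and a \emph{Gale diagram} (any configuration inducing the correct face poset, stable under the two operations isolated above), and checking that the three passages used — from $(g_i)$ to $(U^{T}\widehat x_i)$ by positive rescaling, from $(U^{T}\widehat x_i)$ to $(\widehat x_i-z_\mu)$ by an affine isomorphism, and the identification of $H$ with $\R^{m-n-1}$ — each preserve the Gale-diagram property. The one genuinely geometric input is the transversality of $w_\mu$ to $H$ that makes $U^{T}|_H$ an affine isomorphism, and this is immediate from the positivity $\lambda_i=\langle\ell,\overline x_i\rangle>0$ that is built into the Shephard-diagram construction.
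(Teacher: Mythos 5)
Your argument is correct. Note that the paper itself offers no proof of this statement --- it is quoted from Shephard's paper \cite{She71} --- so there is no internal argument to compare against; judged on its own, your derivation is sound and fits naturally with the surrounding machinery (the relation $X\overline{X}^{T}=O$, Corollary~\ref{cor:coneandpospan}, and the $0\in\relint\conv$ characterization of cofaces). The key computations all check out: the kernel of $\bigl(\begin{smallmatrix}XD\\ \mathbf 1^{T}\end{smallmatrix}\bigr)$ is exactly $\{D^{-1}\overline{X}^{T}u \mid u\perp w_\mu\}$, so $g_i=\mu_i^{-1}U^{T}\overline{x}_i$ is a genuine Gale transform of $P_\mu$; the transversality $\langle\ell,w_\mu\rangle=\sum_i\mu_i^{-1}\lambda_i>0$ makes $U^{T}|_H$ an affine isomorphism killing $z_\mu$; and in the converse direction, writing $z=\sum_i s_i\widehat{x}_i$ with all $s_i>0$ and setting $\mu_i=\lambda_i/s_i$ indeed gives $w_\mu=z$. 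Two small points you use implicitly and could state in one line each: that $0\in\relint\conv S$ for a finite configuration $S$ is equivalent to $0$ being a convex combination of $S$ with \emph{all} coefficients strictly positive (this is what makes both of your stability operations legitimate), and that $\conv\widehat{X}=H\cap\pos\overline{X}$ is full-dimensional in $H$, so that $\operatorname{int}\conv\widehat{X}$ in the statement is the relative interior you actually use.
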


    Let $\Sigma$ be a complete fan of dimension $n$. We choose a point $x_i$, $i=1,\dotsc,m$, from each 1-cone of $\Sigma$. Then, by completeness, the sequence $X=(x_1,\dotsc,x_m)$ positively spans $\R^n$. The sequence $\widehat{X}$ is sometimes called a \emph{Shephard diagram for the fan $\Sigma$}. Let $Y$ be a subsequence of $X$ and suppose that $\pos (X\setminus Y)$ is a face of $\Sigma$. Then $Y$ is called a \emph{coface} of $\Sigma$. The next theorem is Shephard's criterion for projectivity of fans which is easily induced from Theorem~\ref{thm:shepharddiagram}.

    \begin{theorem}[Shephard's criterion]\cite{She71,Ewa86}
        A complete fan $\Sigma$ is strongly polytopal if and only if
        \[
            S(\Sigma,\widehat{X}) := \bigcap_{Y\colon \text{coface of }\Sigma}\relint \conv {\widehat{X}|_Y} \ne\varnothing.
        \]
    \end{theorem}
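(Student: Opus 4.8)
The plan is to obtain the criterion as a short consequence of Theorem~\ref{thm:shepharddiagram}, matching the cofaces of $\Sigma$ against those of a suitably scaled polytope and then invoking the rigidity of complete fans. First I would record the setup: since $\Sigma$ is complete, $X=(x_1,\dotsc,x_m)$ positively spans $\R^n$, so by Corollary~\ref{cor:coneandpospan} its linear transform $\overline X$ spans a strongly convex cone and the Shephard diagram $\widehat X$ is a well-defined sequence in a hyperplane $H\cong\R^{m-n-1}$ with $\conv\widehat X$ of full dimension in $H$. For $\mu=(\mu_1,\dotsc,\mu_m)$ with all $\mu_i>0$, scaling rays by positive constants does not change positive hulls, so the face fan $\Sigma_\mu:=\{\pos F\mid F\text{ a proper face of }P_\mu\}$ of $P_\mu:=\conv(\mu_1x_1,\dotsc,\mu_mx_m)$ is exactly the set of cones $\pos(X\setminus Y)$ with $Y$ a coface of $P_\mu$; and because $X$ positively spans $\R^n$ one has $0\in\operatorname{int}P_\mu$, so $\Sigma_\mu$ is a genuine complete fan. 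By definition $\Sigma$ is strongly polytopal precisely when $\Sigma=\Sigma_\mu$ for some such $\mu$, and this is the equality I would aim to produce.

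For the forward direction, assume $\Sigma$ is strongly polytopal with spanning simplicial polytope $P^\ast$. Each $1$-cone of $\Sigma$ carries a unique vertex of $P^\ast$, so $P^\ast=P_\mu$ for the scaling $\mu$ sending $x_i$ to that vertex. The first assertion of Theorem~\ref{thm:shepharddiagram} then gives an origin $z\in H$ for which $\widehat X$ is a Gale diagram of $P_\mu$, and Corollary~\ref{cor:galeforpolytopes} applied with this origin identifies the cofaces of $P_\mu$ as the $Y$ with $z\in\relint\conv(\widehat X|_Y)$. Since $P^\ast=P_\mu$ realizes $\Sigma$, the cofaces of $P_\mu$ coincide with those of $\Sigma$, so $z$ lies in $\relint\conv(\widehat X|_Y)$ for every coface $Y$ of $\Sigma$, i.e. $z\in S(\Sigma,\widehat X)\ne\varnothing$. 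Conversely, starting from $z\in S(\Sigma,\widehat X)$, note that the zero cone $\{0\}=\pos\varnothing$ lies in $\Sigma$, so the full index set is a coface and $z\in\relint\conv(\widehat X|_X)=\operatorname{int}\conv\widehat X$; the converse half of Theorem~\ref{thm:shepharddiagram} then yields $\mu$ for which $\widehat X$, with origin $z$, is a Gale diagram of $P_\mu$. Every coface $Y$ of $\Sigma$ satisfies $z\in\relint\conv(\widehat X|_Y)$, hence is a coface of $P_\mu$, so $\pos(X\setminus Y)=\pos(X_\mu\setminus Y)\in\Sigma_\mu$; therefore $\Sigma\subseteq\Sigma_\mu$.

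The step I expect to carry the real weight is the final one: upgrading the inclusion $\Sigma\subseteq\Sigma_\mu$ of complete fans to an equality. The argument I have in mind is that a maximal cone $C$ of $\Sigma_\mu$ is $n$-dimensional, so its interior meets the interior of some maximal cone $C'$ of $\Sigma\subseteq\Sigma_\mu$; as $C$ and $C'$ both lie in the fan $\Sigma_\mu$ their intersection is a common face of each, and a face of a cone that meets the cone's interior is the whole cone, forcing $C=C'$. Thus $\Sigma$ and $\Sigma_\mu$ have the same maximal cones, hence $\Sigma=\Sigma_\mu$ and $P_\mu$ is a spanning polytope for $\Sigma$. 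Besides this rigidity lemma, the only other point needing care is the verification that $0\in\operatorname{int}P_\mu$, which is what makes $\Sigma_\mu$ a fan in the first place; the remainder is routine translation between the cofaces of $P_\mu$ and those of $\Sigma$ through Theorem~\ref{thm:shepharddiagram}.
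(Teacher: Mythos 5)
The paper does not actually prove this statement---it is quoted from \cite{She71,Ewa86} with only the remark that it ``is easily induced from Theorem~\ref{thm:shepharddiagram}''---and your argument is a correct and complete execution of exactly that derivation, including the two points that genuinely need care (that $0\in\operatorname{int}P_\mu$ so the face fan $\Sigma_\mu$ is a complete fan, and that an inclusion of complete fans forces equality). The only detail worth adding is that, to match the paper's definition of strongly polytopal, one should observe that $P_\mu$ is simplicial; this is immediate because $\Sigma=\Sigma_\mu$ is simplicial and distinct vertices of $P_\mu$ lie on distinct rays, so each facet has exactly $n$ vertices.
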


    Let $K$ be a fan-like sphere with $V(K)=[m]$ and $\Sigma$ be a complete fan over $\wed_1(K)$. Assume that $V(\wed_1(K))=[m]\cup\{0\}$ by renaming $1_1$ to $0$ and $1_2$ to $1$. Choose a point $x_i$ from each 1-cone corresponding to $i\in V(\wed_1(K))$.
    \begin{proposition}\label{prop:ShephardandWedge}
        In above setting, let $\widehat{X}=(\widehat{x}_0,\widehat{x}_1,\dotsc,\widehat{x}_m)$ be a Shephard diagram for $\Sigma$. Then the subsequences $\widehat{X} \setminus(\widehat{x}_0) = (\widehat{x}_1,\widehat{x}_2,\dotsc,\widehat{x}_m)$ and $\widehat{X} \setminus(\widehat{x}_1) = (\widehat{x}_0,\widehat{x}_2,\dotsc,\widehat{x}_m)$ are Shephard diagrams for the projected fans $\proj_0\Sigma$ and $\proj_1\Sigma$ respectively. Furthermore,
        \[
            S(\Sigma,\widehat{X}) = S(\proj_0\Sigma, \widehat{X}\setminus(\widehat{x}_0)) \cap S(\proj_1\Sigma, \widehat{X}\setminus(\widehat{x}_1)) .
        \]
    \end{proposition}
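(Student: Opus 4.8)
The plan is to analyze how the Shephard diagram of $\Sigma$ decomposes under the two projections and then to match the intersections of relative interiors of cofaces. The key structural fact is the one already exploited in Proposition~\ref{prop:chrwedge} and Proposition~\ref{prop:FanandWedge}: after renaming $1_1$ to $0$ and $1_2$ to $1$, every maximal simplex of $\wed_1(K)$ contains $0$ or $1$, and the vertex sets $\link_{\wed_1(K)}\{0\}$ and $\link_{\wed_1(K)}\{1\}$ are both naturally identified with $V(K) = \{2,\dotsc,m\}$ together with the ``other'' new vertex. I would first recall from Lemma~\ref{lem:hyperplaneandzerosum} and the discussion of linear/Gale transforms that passing from a fan to a projected fan corresponds, on the level of linear transforms of the ray generators, to deleting the column indexed by the vertex being projected out: if $\overline X = (\overline x_0,\overline x_1,\dotsc,\overline x_m)$ is a linear transform of the generators $X$ of $\Sigma$, then $\overline X \setminus (\overline x_0)$ is a linear transform of the generators of $\proj_0\Sigma$, and similarly for $\overline x_1$. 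Intersecting the supporting hyperplane $H$ with the corresponding cones shows the analogous statement for Shephard diagrams; this establishes the first assertion.

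Next I would pin down the coface structure. A subsequence $Y$ of $X$ is a coface of $\Sigma$ exactly when $\pos(X\setminus Y)$ is a face of $\Sigma$, i.e.\ $X\setminus Y$ indexes a face of $\wed_1(K)$. I would split into the three cases already used in the proof of Lemma~\ref{lem:wedge}: a face of $\wed_1(K)$ either (Case III) contains exactly one of $0,1$, or (Case II) contains both, or (Case I) contains neither — but Case I cannot occur for a \emph{maximal} simplex, hence not for a \emph{minimal} coface, since every maximal simplex of $\wed_1(K)$ meets $\{0,1\}$. I only need minimal cofaces, because $\relint\conv$ of a larger coface contains $\relint\conv$ of a minimal one inside it, so the full intersection $S(\Sigma,\widehat X)$ is already the intersection over minimal cofaces. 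A minimal coface $Y$ with $0\notin X\setminus Y$ (so $0\in Y$) and $1\in X\setminus Y$ corresponds, after deleting the coordinate $0$, to a minimal coface of $\proj_0\Sigma$, and conversely every minimal coface of $\proj_0\Sigma$ arises this way from a face of $\wed_1(K)$ of the form $\{1\}\cup(\text{face of }\link\{1\})$; symmetrically with $0\leftrightarrow 1$. The Case II minimal cofaces (those with $0,1$ both in $X\setminus Y$, equivalently $X\setminus Y = \{0,1\}\cup\tau$ with $\tau$ a face of $\link_K\{1\}$) are precisely the ones recording faces containing the edge $\{0,1\}$; I would observe that such a $Y$, viewed after deleting coordinate $0$, is a coface of $\proj_0\Sigma$ (not necessarily minimal, but that is harmless by the remark above), and likewise after deleting coordinate $1$. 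Thus the collection of cofaces of $\Sigma$, read through the two deletions, exactly exhausts the cofaces of $\proj_0\Sigma$ and of $\proj_1\Sigma$, with the caveat that $\widehat x_0$ never appears in $\widehat X|_Y$ when we are on the $\proj_0$ side (since $0\in Y$ there) and $\widehat x_1$ never appears on the $\proj_1$ side.

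Finally I would assemble the identity. Since $\widehat X\setminus(\widehat x_0)$ and $\widehat X\setminus(\widehat x_1)$ are honest Shephard diagrams for $\proj_0\Sigma$ and $\proj_1\Sigma$ by the first part, Shephard's criterion gives $S(\proj_0\Sigma,\widehat X\setminus(\widehat x_0)) = \bigcap_{Y}\relint\conv(\widehat X\setminus(\widehat x_0))|_Y$ over cofaces $Y$ of $\proj_0\Sigma$, and similarly for $\proj_1$. By the coface bookkeeping of the previous paragraph, the union of these two families of sets, expressed back in terms of $\widehat X|_Y$, is exactly the family $\{\relint\conv\widehat X|_Y : Y \text{ a coface of }\Sigma\}$; intersecting everything yields $S(\Sigma,\widehat X) = S(\proj_0\Sigma,\widehat X\setminus(\widehat x_0))\cap S(\proj_1\Sigma,\widehat X\setminus(\widehat x_1))$. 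The step I expect to be the main obstacle is the precise matching in Case II: one must check that dropping $\widehat x_0$ (resp.\ $\widehat x_1$) from a Case II coface does not change its relative interior of convex hull in a way that breaks the equality — but this follows because on the $\proj_0$ side $0\in Y$ means $\widehat x_0$ is simply not one of the points being convexly combined, so no information is lost; the only thing to verify carefully is that every minimal coface on each projected side lifts to a face of $\wed_1(K)$, which is exactly the content of the Case~II/Case~III analysis together with the join description $\wed_1(K) = (I\star\link_K\{1\})\cup(\partial I\star(K\setminus\{1\}))$.
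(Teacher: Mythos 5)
Your route is genuinely different from the paper's: for the first assertion the paper normalizes the generators so that the columns sum to zero and computes the Shephard diagram explicitly in block form (solving $q_0\alpha_i + a\cdot\widehat{A}^i=0$, etc.), whereas you argue abstractly that the space of linear dependencies of the projected generators $[x_i]$ is obtained from that of $X$ by deleting the coordinate of the projected vertex (correct: the deletion map is injective on the dependency space since $x_0\neq 0$, and surjective onto the dependencies of the $[x_i]$), and then radially project to $H$. For the second assertion the paper says only that the identity ``is obvious by observing cofaces,'' so your explicit bookkeeping of Cases I--III from Lemma~\ref{lem:wedge} is a real addition, and the matching of the Case~II/III cofaces of $\Sigma$ with the cofaces of $\proj_0\Sigma$ and $\proj_1\Sigma$ is right.

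There is, however, one genuine gap: the sentence ``I only need minimal cofaces, because $\relint\conv$ of a larger coface contains $\relint\conv$ of a minimal one inside it.'' As a statement about convex hulls of nested point sets this is false in general --- if the extra points leave the affine hull, the smaller hull is a proper face of the larger and the two relative interiors are disjoint (an edge of a triangle versus the triangle). Since your entire treatment of the Case~I cofaces (those $Y$ containing both $0$ and $1$, which appear in $S(\Sigma,\widehat{X})$ but on neither projected side) consists of discarding them via this reduction, the identity is not yet established. Two ways to repair it. First, you can justify the reduction in this setting: a minimal coface $Y'$ is the complement of a maximal simplex $\sigma'$, the generators $x_i$, $i\in\sigma'$, are linearly independent, and the duality $\ker\overline{X}=\text{rowspace}(X)$ then forces the $m-n$ columns $\overline{x}_i$, $i\in Y'$, to be linearly independent, so $\widehat{X}|_{Y'}$ affinely spans $H$; hence $\operatorname{aff}(\widehat{X}|_{Y'})=\operatorname{aff}(\widehat{X}|_{Y})$ for every coface $Y\supseteq Y'$ and the inclusion of relative interiors does hold. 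Second, you can bypass the reduction entirely: for a Case~I coface $Y$, both $Y\setminus\{0\}$ and $Y\setminus\{1\}$ are cofaces (every face of $\wed_1(K)$ missing $0$ and $1$ extends by either new vertex), and if $z=\sum_{i\in Y\setminus\{0\}}c_i\widehat{x}_i=\sum_{i\in Y\setminus\{1\}}d_i\widehat{x}_i$ with all coefficients positive and summing to $1$, then setting $c_0=d_1=0$ and averaging gives $z=\sum_{i\in Y}\tfrac{1}{2}(c_i+d_i)\widehat{x}_i$ with all coefficients positive, so $z\in\relint\conv\widehat{X}|_Y$; thus the right-hand side is contained in every Case~I term and the identity follows. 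Either patch completes your proof.
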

    \begin{proof}
        To compute $\widehat{X}$, let $x_i$ be the $i$-th column vector of the following matrix
        \begin{equation*}
            X=\left(
                \begin{array}{cc|ccc}
                q_0     &0      &   & a &   \\
                0     &q_1      &   & b &   \\ \hline
                0     &0      &     &        &     \\
                \vdots&\vdots &     & A&     \\
                0     &0      &     &        &
                \end{array}
            \right)_{(n+1)\times(m+1)},
        \end{equation*}
        where $q_0,q_1>0$ are positive reals, $a$ and $b$ are row vectors of dimension $m-1$, and each column is labeled by the vertices $0,1,\dotsc,m$ (See \eqref{eqn:matrixwedge}). Moreover, one can assume that $x_0+x_1+\dotsb+x_m = 0$ (that is why $q_0$ and $q_1$ are not 1) and thus one can compute a Shephard diagram $\widehat{X}$ by the inverse operation of the Gale transform. First, let $\widehat{A}$ be a matrix indicating a Shephard diagram of columns of $A$. More precisely, we choose $\widehat{A}$ such that the following identity
        \[
            A\cdot\left(\,\begin{matrix}
                 & \widehat{A} &  \\ \hline
                 1 & \cdots & 1
            \end{matrix}\,\right)^T = O,
        \]
        which is possible since the sum of columns of $A$ is zero. Now observe that a Shephard diagram of $X$ is written as
        \[
            \widehat{X}=\left(\begin{array}{cc|ccc}
            \alpha_1 & \beta_1 &  & & \\
            \vdots & \vdots & & \widehat{A} & \\
            \alpha_{m-n-1} & \beta_{m-n-1} & & &
            \end{array}\right)_{(m-n-1)\times(m+1)},
        \]
        where $\alpha_i$ and $\beta_i$ are real numbers satisfying
        \[
            q_0\alpha_i + a\cdot \widehat{A}^i = 0,\qquad i=1,\dotsc,m-n-1,
        \]
        and
        \[
            q_1\beta_i + b\cdot \widehat{A}^i = 0,\qquad i=1,\dotsc,m-n-1
        \]
        ($\widehat{A}^i$ denotes the $i$-th row of $\widehat{A}$).

        On the other hand, it is easy to see the matrix for $\proj_1\Sigma$ is
        \[
            \left(
                \begin{array}{c|ccc}
                q_0           &   & a &   \\ \hline
                0           &     &        &     \\
                \vdots &     & A &     \\
                0          &     &        &
                \end{array}
            \right)_{n\times m},
        \]
        the sum of columns of which is still zero and therefore its Shephard diagram is
        \[
            \left(\begin{array}{c|ccc}
            \alpha_1 &  & & \\
            \vdots & & \widehat{A} & \\
            \alpha_{m-n-1} & & &
            \end{array}\right)_{(m-n-1)\times m}.
        \]
        The same goes for $\proj_0\Sigma$ and the proof is done. The last identity is obvious by observing cofaces of $\Sigma$.
    \end{proof}
    When $M$ is a compact toric orbifold, the proposition above provides an alternative proof of the fact \cite{Ewa86} that the canonical extension $M(J)$ is projective if and only if $M$ is projective since $S(\proj_0\Sigma, \widehat{X}\setminus(\widehat{x}_0)) = S(\proj_1\Sigma, \widehat{X}\setminus(\widehat{x}_1)) = S(\Sigma, \widehat{X})$.

    In the proof of Proposition~\ref{prop:ShephardandWedge}, we have essentially shown the following:
    \begin{proposition}\label{prop:Shephardandproj}
        Let $K$ be a fan-like sphere with $V(K) = [m]$ and $\Sigma$ be a complete fan over $K$ with $m$ 1-cones. Choose a point $x_i$ from each 1-cone corresponding to $i\in [m]$ and let $\widehat{X} = (\widehat{x}_1,\dotsc,\widehat{x}_m)$ be a Shephard diagram for $\Sigma$. Then the subsequence $\widehat{X}\setminus(\widehat{x}_i)$ is a Shephard diagram for $\proj_{i}\Sigma$ for any $i\in[m]$.
    \end{proposition}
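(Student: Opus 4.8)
The plan is to recognize Proposition~\ref{prop:Shephardandproj} as the one-vertex specialization of the block computation carried out in the proof of Proposition~\ref{prop:ShephardandWedge}, and to record why that computation goes through for an arbitrary vertex $i$ of an arbitrary complete fan rather than only for a newly created wedge vertex. Note first that $\link_K\{i\}$ is again a fan-like sphere by Lemma~\ref{lem:projection}, so $\proj_i\Sigma$ is a genuine complete fan and ``Shephard diagram for $\proj_i\Sigma$'' makes sense. The first step is a normalization. Rescaling each ray representative $x_j$ by a positive scalar changes neither $\Sigma$, nor $\proj_i\Sigma$, nor the Shephard diagram $\widehat X$ (the invariance already remarked upon in the text), so since $X$ positively spans $\R^n$ we may assume $x_1+\dotsb+x_m=0$; a change of basis of $\R^n$ leaves the space of linear dependencies of $X$, and hence $\widehat X$, untouched, so we may further assume $x_i=q_i e_1$ with $q_i>0$. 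Displaying the $i$-th column first, $X=\left(\begin{smallmatrix} q_i & a\\ 0 & A\end{smallmatrix}\right)$ with $a$ a row of length $m-1$ and $A$ of size $(n-1)\times(m-1)$; and by the definition of the projected fan, $\proj_i\Sigma$ is carried by the matrix $A$, obtained by deleting the first row and the $i$-th column, since $x_i$ dies in $\R^n/\R x_i$.

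Next I would compute a Shephard diagram by inverting the Gale transform exactly as in the proof of Proposition~\ref{prop:ShephardandWedge}. From $x_1+\dotsb+x_m=0$ the columns of $A$ sum to $0$ and $q_i+(\text{sum of }a)=0$. The essential point is then purely linear algebra: the linear map forgetting the $i$-th coordinate carries the space $D_X$ of linear dependencies among the columns of $X$ isomorphically onto the space $D_A$ of those among the columns of $A$; indeed both have dimension $m-n$, and the $i$-th coordinate $z_i$ of any element of $D_X$ is forced by the remaining coordinates $z'$ through $q_i z_i + a z' = 0$. Hence any basis of $D_X$ restricts to a basis of $D_A$. Written out as a linear transform (rows a basis of the dependency space, columns the $\overline x_j$), this says exactly that deleting the $i$-th column from a linear transform of $X$ gives a linear transform of $A$, and therefore, intersecting with a cutting hyperplane, that $\widehat X\setminus(\widehat x_i)$ is a Shephard diagram for $A=\proj_i\Sigma$; the deleted point $\widehat x_i$ is the one forced by $q_i\widehat x_i^{\,T}=-a\,\widehat A^{\,T}$, with $\widehat A$ the sub-diagram. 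Finally the combinatorial data match: the cofaces of $\proj_i\Sigma$ are precisely the cofaces of $\Sigma$ containing $i$, with $i$ removed, so Shephard's criterion is transported correctly.

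The only part requiring care — and the only potential obstacle — is the bookkeeping around the conventions for Shephard diagrams: their independence of the choice of representatives $x_j$, of the basis of $\R^n$, and of the cutting hyperplane, together with the fact that $\proj_i\Sigma$ may carry ghost rays when $\link_K\{i\}$ has fewer than $m-1$ vertices. None of this disturbs the identity above, because every ingredient is expressed through the space of linear dependencies, which is insensitive to all of these choices; so I anticipate no genuine difficulty, the substance being already contained in the computation proving Proposition~\ref{prop:ShephardandWedge}.
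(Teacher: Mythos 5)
Your proposal is correct and follows essentially the same route as the paper, which proves this proposition only by observing that it is already contained in the block computation for Proposition~\ref{prop:ShephardandWedge} (normalize so the columns sum to zero, put $x_i$ on a coordinate axis, and note that deleting the $i$-th column of a linear transform of $X$ yields a linear transform of the matrix carrying $\proj_i\Sigma$). Your explicit dimension-count argument identifying the dependency space of $X$ with that of the submatrix is a slightly more careful write-up of the same step, and your remarks on ghost vertices and choice-independence match the paper's own caveats.
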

    Be cautious that the underlying complex of $\proj_{i}\Sigma$ is $\link_K{\{i\}}$ and in general its vertices do not bijectively correspond to entries of $\widehat{X}\setminus(\widehat{x}_i)$, causing ghost vertices. But we still have no problem to use $\widehat{X}\setminus(\widehat{x}_i)$ to apply Shephard's criterion to determine if $\proj_{i}\Sigma$ is strongly polytopal.
We continue to study projectivity of toric varieties in Section~\ref{sec:projectivity}.

%    \begin{lemma}
%        Let $\Delta_1$ and $\Delta_2$ be two $n$-simplexes in $\R^n$ sharing a facet $F$ and let $H$ be the hyperplane containing $F$. Then the following are equivalent.
%
%        \begin{enumerate}
%            \item $\operatorname{int}\Delta_1\cap\operatorname{int}\Delta_2\ne\varnothing$.
%            \item $\Delta_1$ and $\Delta_2$ are in the same side of $H$.
%            \item $\Delta_1 \cap \Delta_2$ is an $n$-simplex.
%        \end{enumerate}
%    \end{lemma}

%%%%%%%%%%%%%%%%%%%%%%%%%%%%%%%%%%%%%%%%%%%%%%%%%%%%%%%%%%%%%%%%%%%%%%%%%%
%%%%%%%%%%%%%%%%%%%%%%%%%%%%%%%%%%%%%%%%%%%%%%%%%%%%%%%%%%%%%%%%%%%%%%%%%%

\section{Application: Classification of toric varieties}\label{sec:applications}

    Let $M = M(K,\lambda)$ be a topological toric manifold of dimension $2n$. One can think about two possible applications of the main result:

    \begin{enumerate}
        \item $K = \star_{i=1}^k\partial\Delta^{n_i}$ is the join of boundaries of simplices. Note that the dual of $K$ is the boundary of the simple polytope $\prod_{i=1}^k\Delta^{n_i}$.
        \item $K$ is a simplicial sphere of dimension $n-1$ with at most $n+3$ vertices.
    \end{enumerate}

    Let $ P = \prod_{i=1}^k\Delta^{n_i}$ be a product of simplices. In fact, quasitoric manifolds and (especially toric manifolds) over $P$ is already studied by \cite{CMS10}.

    \begin{definition}
        The \emph{generalized Bott tower} is the following sequence of projective bundles
        \[
            B_\ell \stackrel{\pi_\ell}{\longrightarrow } B_{\ell-1}\stackrel{\pi_{\ell-1}}{\longrightarrow }\dotsb \stackrel{\pi_2}{\longrightarrow }B_1\stackrel{\pi_1}{\longrightarrow } B_0 =\{\text{a point}\},
        \]
        where $B_i$ for $i = 1,\dotsc,\ell$ is the projectivization of the Whitney sum of $n_i+1$ $F$-line bundles over $B_{i-1}$ where $F=\C$ or $\R$. Each $B_i$ is called a \emph{generalized Bott manifold} over $F$ of stage $i$. If all $n_i$ is equal to $1$, then we call the sequence a \emph{Bott tower} and each $B_i$ a \emph{Bott manifold}.
    \end{definition}
    Every generalized Bott manifold is actually a quasitoric manifold over $ P = \prod_{i=1}^k\Delta^{n_i}$ which is a smooth projective toric variety. Conversely, every toric manifold (in fact, every quasitoric manifold admitting an equivariant almost complex structure) over $P$ becomes a generalized Bott manifold.
%    Thus we are going to explain them briefly and consider its relation to wedge operations. First, we need a lemma.
    \begin{lemma}
        Let $K$ and $L$ be simplicial complexes whose vertex sets are $\{v_1,\dotsc,v_m\}$ and $\{w_1,\dotsc,w_\ell\}$ respectively. Let $J=(a_1,\dotsc,a_m)$ and $J'=(b_1,\dotsc,b_\ell)$ be vectors whose entries are positive integers. Then
        \[
            K(J)\star L(J') = (K\star L)(J\cup J'),
        \]
        where $J\cup J' = (a_1,\dotsc,a_m,b_1,\dotsc,b_\ell)$.
    \end{lemma}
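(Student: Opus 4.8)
The plan is to verify the identity at the level of minimal non-faces, using the fact recalled in Section~\ref{sec:wedge} that a simplicial complex is determined by its set of minimal non-faces. First I would observe that the two complexes have the same vertex set: both $K(J)\star L(J')$ and $(K\star L)(J\cup J')$ have vertex set $\{(v_i)_t\mid i\in[m],\ 1\le t\le a_i\}\sqcup\{(w_j)_s\mid j\in[\ell],\ 1\le s\le b_j\}$ under the obvious identification, since the $(J\cup J')$-construction blows up the $v_i$-block of $V(K\star L)$ by the coordinates of $J$ and the $w_j$-block by those of $J'$. So the only thing to check is that the minimal non-faces agree.

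Next I would use the elementary observation that for simplicial complexes $A$ and $B$ on disjoint vertex sets, the minimal non-faces of $A\star B$ are precisely the minimal non-faces of $A$ together with those of $B$: a set $\tau=\tau_A\sqcup\tau_B$ fails to be a face of $A\star B$ exactly when $\tau_A\notin A$ or $\tau_B\notin B$, and minimality then forces the other part to be empty, so $\tau$ is a minimal non-face of $A$ or of $B$. Applying this with $A=K(J)$, $B=L(J')$ shows that the minimal non-faces of $K(J)\star L(J')$ are exactly the minimal non-faces of $K(J)$ together with those of $L(J')$. By the description of the $K(J)$-construction in Section~\ref{sec:wedge}, these are the blown-up sets $\bigcup_{t}\{(v_{i_t})_1,\dots,(v_{i_t})_{a_{i_t}}\}$ for each minimal non-face $\{v_{i_1},\dots,v_{i_k}\}$ of $K$, together with the analogous blown-up minimal non-faces of $L$.

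On the other side, the same join observation (now with $A=K$, $B=L$) shows that the minimal non-faces of $K\star L$ are those of $K$ together with those of $L$; and since a minimal non-face of $K\star L$ is entirely contained in one of the two vertex blocks, the $(J\cup J')$-construction applied to a minimal non-face coming from $K$ only involves the $J$-coordinates, and symmetrically for $L$. Hence the minimal non-faces of $(K\star L)(J\cup J')$ are the very same list as above, and the two complexes coincide. I do not expect a genuine obstacle here; the only point demanding a little care is the index bookkeeping, namely checking that the $(J\cup J')$-construction decouples along the two blocks of $V(K\star L)$, which is immediate from the fact that no minimal non-face of a join straddles the two sides.

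As an alternative, one could argue by induction on $\sum_i(a_i-1)+\sum_j(b_j-1)$ using the simplicial wedge construction directly: for $v\in V(K)$ one has $\link_{K\star L}\{v\}=\link_K\{v\}\star L$ and $(K\star L)\setminus\{v\}=(K\setminus\{v\})\star L$, so combined with the distributivity $(A\cup B)\star C=(A\star C)\cup(B\star C)$ this gives $\wed_v(K\star L)=\wed_v(K)\star L$, and symmetrically $\wed_w(K\star L)=K\star\wed_w(L)$ for $w\in V(L)$. Iterating and invoking the order-independence of repeated wedgings recalled in Section~\ref{sec:wedge} then yields the claim.
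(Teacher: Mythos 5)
Your proposal is correct and follows essentially the same route as the paper: the paper's (one-line) proof rests precisely on the observation that a minimal non-face of $K\star L$ is a minimal non-face of $K$ or of $L$, and then compares the blown-up minimal non-faces on both sides, which is exactly what you spell out. Your extra inductive argument via $\wed_v(K\star L)=\wed_v(K)\star L$ is a valid alternative but not needed.
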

    \begin{proof}
        A proof is straightforward once observing that a minimal non-face of
        \[
            K \star L = \{ \sigma \cup \tau \mid \sigma \in K, \tau \in L\}
        \]
        is a minimal non-face of $K$ or $L$.
    \end{proof}
    Hence the simplicial complex $\star_{i=1}^k\partial\Delta^{n_i}$ is obtained by wedge operations from the complex $\star_{i=1}^k\partial\Delta^1$ which is the boundary of the $k$-cross polytope. Therefore, in the language of simple polytopes, any product of simplices is obtained by polytopal wedge operations from the $k$-cube $I^k$. Let $M = M(P,\lambda)$ be a quasitoric manifold admitting an equivariant almost complex structure. Then its projection with respect to a $k$-cube $I^k$ must be a Bott manifold. Conversely, if every projection of $M = M(P,\lambda)$ with respect to a $k$-cube is a toric manifold, then it can be shown that $M$ is a generalized Bott manifold although the proof is omitted. For further study of (generalized) Bott manifolds, see \cite{GK94}, \cite{MP08}, and \cite{CMS10}.

    Next, let us consider the second case. It was proved by Mani \cite{Man72} that every simplicial $(n-1)$-sphere with at most $n+3$ vertices is polytopal. Hence in this case every topological toric manifold over $K$ is a quasitoric manifold and we are left with simplicial $n$-polytopes $P^*$ with at most $n+3$ vertices which is classified using the Gale diagram introduced in previous section. In this case, its Gale diagram lies in $\R^2$. In the language of simple polytopes, its dual is a simple $n$-polytope $P$ with $\le n+3$ facets. Let $P_{2k-1}$ be a regular $(2k-1)$-gon in $\R^2$ with center at the origin $O$ and vertex set $\mathcal{V}=\{v_1,\ldots,v_{2k-1}\}$, where $v_i$'s are labeled in counterclockwise order. For convenience we further assume that $v_i$'s are on the unit circle. For a given surjective map $\phi: [n+3] \to \mathcal{V}$, we have a sequence of points $X'=(x'_1,\dotsc,x'_{n+3})$ such that $x'_i = \phi(i)$. We call $X'$ a \emph{standard Gale diagram} in $\R^2$. Observe that the face poset defined by $X'$ is a simplicial complex and hence the corresponding polytope $P^\ast$ is simplicial. Let $K$ be the simplicial complex given by $X'$. Note that $K$ is a boundary complex for a simple polytope $P$. Recall that
    \[
        I\text{ is a face of }K \Longleftrightarrow O\in\conv\{\phi(i)\mid i\in[n+3]\setminus I\}.
    \]
    For $1\le i\le 2k-1$, let $a_i$ be the cardinality of $\phi^{-1}(v_i)$. The standard Gale diagram is determined by $a_i$'s up to symmetry of $P_{2k-1}$. A polygon $P_{2k-1}$ whose vertices $v_i$ are numbered by a positive integer $a_i$ is again called a standard Gale diagram. We sometimes abuse the term Gale diagram for standard Gale diagram. See Figure~\ref{fig:galediagrams} for an illustration.

    It is a classical result that every simple $n$-polytope with not more than $n+3$ facets has a corresponding standard Gale diagram on $\R^2$. Moreover, two simple $n$-polytopes with $n+3$ facets are combinatorially equivalent if and only if their standard Gale diagrams coincide after an orthogonal linear transform of $\R^2$ onto itself. Note that $2k-1=3$ and $a_i = 1$ for some $i$ if and only if $P$ has $n+1$ or $n+2$ facets.

    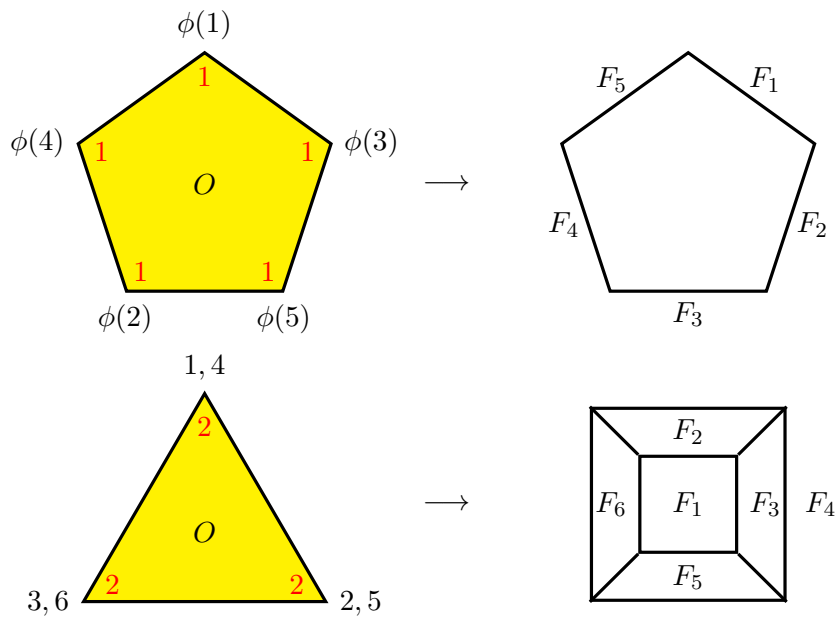
\begin{figure}
        \begin{tikzpicture}[scale=.53, very thick]
            \node[name=p, shape=regular polygon, regular polygon sides=5, inner sep=1cm, draw, fill=yellow] at (-6,8) {};
            \node[name=p1, shape=regular polygon, regular polygon sides=5, inner sep=.8cm] at (-6,8) {};
            \node[name=pp, shape=regular polygon, regular polygon sides=5, inner sep=1cm, draw] at (6,8) {};
            \node[name=pp1, shape=regular polygon, regular polygon sides=5, inner sep=1.2cm] at (6,8) {};
            \node[name=t, regular polygon, regular polygon sides=3, inner sep=.65cm, draw, fill=yellow] at (-6,-.7) {};
            \node[name=t1, regular polygon, regular polygon sides=3, inner sep=.48cm] at (-6,-.7) {};
            \node[name=s, regular polygon, regular polygon sides=4, inner sep=.9cm, draw] at (6,0) {};
            \node[name=ss, regular polygon, regular polygon sides=4, inner sep=.45cm, draw] at (6,0) {};
            \foreach \k in {corner 1, corner 2, corner 3, corner 4}
            \draw (s.\k) -- (ss.\k);
            \foreach \index in {corner 1, corner 2, corner 3, corner 4, corner 5}
            \draw [shift=(p1.\index)] node {\red{$1$}};
            \draw [shift=(p.center)] node {$O$};
            \draw [shift=(p.corner 1)] node[above] {$\phi(1)$};
            \draw [shift=(p.corner 5)] node[right] {$\phi(3)$};
            \draw [shift=(p.corner 4)] node[below] {$\phi(5)$};
            \draw [shift=(p.corner 3)] node[below] {$\phi(2)$};
            \draw [shift=(p.corner 2)] node[left ] {$\phi(4)$};
            \draw (0,8) node {$\longrightarrow$};
            \draw [shift=(pp1.side 5)] node {$F_1$};
            \draw [shift=(pp1.side 4)] node {$F_2$};
            \draw [shift=(pp1.side 3)] node {$F_3$};
            \draw [shift=(pp1.side 2)] node {$F_4$};
            \draw [shift=(pp1.side 1)] node {$F_5$};
            \foreach \j in {corner 1, corner 2, corner 3}
            \draw [shift=(t1.\j)] node {\red{$2$}};
            \draw [shift=(t.center)] node {$O$};
            \draw [shift=(t.corner 1)] node[above] {$1,4$};
            \draw [shift=(t.corner 3)] node[right] {$2,5$};
            \draw [shift=(t.corner 2)] node[left ] {$3,6$};
            \draw (0,0) node {$\longrightarrow$};
            \draw [shift=(s.center)] node {$F_1$};
            \draw [shift=(ss.side 1)] node[above] {$F_2$};
            \draw [shift=(ss.side 4)] node[right] {$F_3$};
            \draw [shift=(ss.side 3)] node[below] {$F_5$};
            \draw [shift=(ss.side 2)] node[left ] {$F_6$};
            \draw [shift=(s.side 4),xshift=5pt] node[right] {$F_4$};
        \end{tikzpicture}
        \caption{Standard Gale diagrams and the corresponding polytopes: $[1,1,1,1,1]$ and $[2,2,2]$.}\label{fig:galediagrams}
    \end{figure}

    Let us denote by $[a_1,\ldots,a_{2k-1}]$ the simple polytope whose Gale diagram is the $(2k-1)$-gon whose vertices are labeled by $(a_1,\ldots,a_{2k-1})$. For example, $[n_1+1,n_2+1,n_3+1]$ is the product of simplices $\prod_{i=1}^3\Delta^{n_i}$, $n_i\ge 0$.
%If one needs to specify a facet ordering of $P$, it is enough to explicitly describe the map $\phi$. One way is to list all elements of $\phi^{-1}(v_i)$ like $P=[[q_{1,1},\ldots,q_{1,a_1};\ldots;q_{2k-1,1},\ldots,q_{2k-1,a_{2k-1}}]]$ where $\phi(q_{i,j})=v_i$.

    We could apply Proposition~\ref{prop:chrwedge} to classify topological toric manifolds over the polytope $[a_1,\dotsc,a_{2k-1}]$, but we restrict our interest to toric manifolds for now. First, by Lemma~\ref{lem:pjandgale}, every simple $n$-polytope with $n+3$ facets is obtained by consecutive polytopal wedgings from the cube $[2,2,2]$ or
    \[
        P_{[2k-1]} := [\underbrace{1,\dotsc,1}_{2k-1}],\quad k\ge 3.
    \]
    \begin{convention}
        For a given polytope $P$ and vector $J$, one needs a suitable labeling of facets of $P$ for $P(J)$ to be well defined. Note that every vertex labeling of the standard Gale diagram induces a facet labeling of $P_{[2k-1]}$. If there is no comment about it, the default convention for $P_{[2k-1]}$ is that we label the vertices of the standard Gale diagram in counterclockwise order.
    \end{convention}

    \begin{lemma}\label{lem:pjandgale}
        Let $P$ be the simple polytope $P_{[2k-1]}$ with $k\ge 3$ and $J=(a_1,\ldots,a_{2k-1})$. Then
        \[
            P(J) = P_{[2k-1]}(J)=[a_1,\ldots,a_{2k-1}].
        \]
    \end{lemma}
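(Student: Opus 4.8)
The plan is to show that the boundary complexes of the dual (simplicial) polytopes of $P_{[2k-1]}(J)$ and of $[a_1,\ldots,a_{2k-1}]$ coincide; since a simple polytope is determined by this complex, that gives the lemma (the first equality $P(J)=P_{[2k-1]}(J)$ being only notation). I would write $K:=\partial P_{[2k-1]}^\ast$, the boundary complex of the dual of $P_{[2k-1]}$, i.e.\ the simplicial complex on $\{1,\ldots,2k-1\}$ determined by the standard Gale diagram with all multiplicities $1$. By Lemma~\ref{lem:wedge} applied repeatedly, $\partial(P_{[2k-1]}(J))^\ast=K(J)$; on the other hand, by definition $\partial([a_1,\ldots,a_{2k-1}])^\ast$ is the complex $L$ determined by the standard Gale diagram that is the regular $(2k-1)$-gon with multiplicities $(a_1,\ldots,a_{2k-1})$. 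Both $K(J)$ and $L$ have vertex set $\{i_t\mid 1\le i\le 2k-1,\ 1\le t\le a_i\}$, so it is enough to check that they have the same minimal non-faces.

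The key input is the elementary geometric fact that a subset $S$ of the vertices of a regular $(2k-1)$-gon with centre $O$ satisfies $O\notin\conv S$ precisely when $S$ is contained in an arc of $k$ consecutive vertices. Combined with the face criterion for a standard Gale diagram ($\sigma$ is a face iff $O\in\conv\{v_\ell\mid \ell\notin\sigma\}$), this shows that a subset $\tau\subseteq\{1,\ldots,2k-1\}$ is a non-face of $K$ iff its complement lies in an arc of $k$ consecutive vertices, that is, iff $\tau$ contains an arc of $k-1$ consecutive vertices (the complement of a $k$-arc in a $(2k-1)$-cycle being a $(k-1)$-arc). Hence the minimal non-faces of $K$ are exactly the $2k-1$ sets $B$ of $k-1$ consecutive polygon vertices. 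By the definition of $K(J)$, the minimal non-faces of $K(J)$ are obtained from these by blowing up each vertex into all of its copies; that is, they are the sets $\widehat B:=\{i_t\mid i\in B,\ 1\le t\le a_i\}$ as $B$ ranges over the arcs of $k-1$ consecutive vertices.

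It then remains to run the same computation for $L$. A set $\sigma$ of Gale points of $L$ fails to be a face exactly when the origin is not in the convex hull of the Gale points outside $\sigma$, and the positions hit by those points are precisely the positions not entirely contained in $\sigma$; so $\sigma$ is a non-face of $L$ iff the positions not entirely contained in $\sigma$ lie in an arc of $k$ consecutive vertices, that is, iff $\sigma$ contains all copies of every vertex in some arc $B$ of $k-1$ consecutive vertices. The minimal such $\sigma$ are exactly the sets $\widehat B$ above, so $K(J)$ and $L$ have the same minimal non-faces and $K(J)=L$, proving the lemma. The only nontrivial point is the geometric fact about regular odd-gons, which is what feeds both minimal-non-face computations; everything else is bookkeeping with the copies $i_t$. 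A slicker but more delicate alternative would be to prove, using Lemma~\ref{lem:GaleandVertexfigure} and the uniqueness of standard Gale diagrams up to an orthogonal transformation of $\R^2$, that a single polytopal wedge at a facet of a simple polytope with $n+3$ facets just duplicates the corresponding point of the standard Gale diagram, and then to induct on $\sum_i(a_i-1)$; there the delicate step is verifying that the two orthogonal transformations obtained by deleting each of the two wedge-vertices must coincide (they agree on a linearly spanning set of the remaining Gale points, which is where $k\ge 3$ is used).
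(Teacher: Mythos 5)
Your proof is correct and takes essentially the same route as the paper's: both identify the two complexes by computing their minimal non-faces via the Gale criterion and observing that these are exactly the sets $\phi^{-1}(B)$ for $B$ an arc of consecutive polygon vertices. (In fact your count is the accurate one --- the minimal non-faces come from arcs of $k-1$ consecutive vertices, whereas the paper's displayed formula lists the $k$ vertices $v_j,\dotsc,v_{j+k-1}$, an off-by-one slip that the pentagon case $k=3$ already exposes.)
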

    \begin{proof}
        After simple observation, one concludes their minimal non-faces agree. To be precise, observe that the set of minimal non-faces of $[a_1,\ldots,a_{2k-1}]$ is given by
        \[
        \{\phi^{-1}(v_j)\cup\cdots\cup\phi^{-1}(v_{j+k-1})\mid 1\le j\le2k-1\},
        \]
        where the subscripts are mod $2k-1$.
    \end{proof}

    Since the cube has been already covered in the first case, we are enough to check out toric manifolds over $P_{[2k-1]}$. The following theorem is originally proved in \cite{GKB90}, but we include a proof in our language for readers' convenience.

    \begin{theorem} \label{thm:notoricvariety}
        There is no toric manifold over $P_{[2k-1]}$ if $k\ge 4$.
    \end{theorem}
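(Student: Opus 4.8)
The plan is to argue by contradiction using the Gale diagram together with the projection machinery of Theorem~\ref{thm:mainthm}. Suppose $M$ is a toric manifold over $P_{[2k-1]}$ with $k\ge 4$, with characteristic map $(K,\lambda)$ where $K = \partial P_{[2k-1]}^\ast$ is the simplicial sphere whose standard Gale diagram is a regular $(2k-1)$-gon with all labels equal to $1$. Here $n = 2k-1$ vertices and dimension $n-1 = 2k-2$; wait—more precisely $K$ has $m = 2k-1$ vertices and $\Pic(K) = m - n$ forces the dimension to be $n = (2k-1) - \Pic$, but the cleanest bookkeeping is simply: the Gale diagram is a $2$-dimensional configuration of $2k-1$ unit vectors at the vertices of a regular $(2k-1)$-gon, so $m - n - 1 = 2$, i.e. $n = 2k - 4$. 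The minimal non-faces of $K$ are the $2k-1$ ``consecutive $k$-tuples'' $\{v_j, v_{j+1}, \dots, v_{j+k-1}\}$ (indices mod $2k-1$), as recorded in the proof of Lemma~\ref{lem:pjandgale}.

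First I would reduce to a statement about characteristic matrices via the non-singularity condition. Since $K$ is polytopal, $M$ is a quasitoric manifold, so $\lambda\colon V(K)\to \Z^n$ is a non-singular characteristic map: whenever $\sigma$ is a maximal face of $K$ (equivalently, $[2k-1]\setminus\sigma$ is a minimal coface, i.e. a consecutive $k$-tuple in the Gale picture), the vectors $\{\lambda(i) : i\in\sigma\}$ form a $\Z$-basis. The maximal faces of $K$ are exactly the complements of consecutive $k$-tuples, i.e. the sets of $k-1$ vertices that lie in an ``arc'' — more precisely a set $\sigma$ of $k-1$ vertices is a face iff the complementary $k$ vertices contain $O$ in their convex hull, which for vertices of a regular $(2k-1)$-gon happens iff those $k$ vertices do \emph{not} all lie in an open half-plane; equivalently $\sigma$ is a maximal face iff its $k-1$ vertices do lie in consecutive position. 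I would make this combinatorial description precise and then set up the system of unimodularity equations it imposes on the columns of $\lambda$.

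The heart of the argument is then a linear-algebra impossibility. Up to D-J equivalence (row operations on $\lambda$) and relabelling, one can normalize several columns of $\lambda$ to coordinate vectors using one maximal simplex, say $\lambda(v_1) = e_1, \dots, \lambda(v_{k-1}) = e_{k-1}$ where $n = 2k-4 = 2(k-2)$ — so in fact only $k-2$ coordinate vectors fit; I will instead normalize a maximal simplex of size $k-1$ to be $e_1,\dots,e_{k-1}$ only after checking $k-1 \le n$, which holds since $n = 2k-4 \ge k-1 \Leftrightarrow k\ge 3$. Then the remaining $k$ columns are constrained: each consecutive window of $k-1$ vertices must give a unimodular set. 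Sliding the window around the $(2k-1)$-gon produces a cyclic chain of $\pm 1$ determinant conditions; tracking the sign of the successive determinants around the full cycle of length $2k-1$ (an odd number) yields a parity contradiction — the product of the ``transition ratios'' around the cycle must be $+1$, but the combinatorics of which vectors enter and leave the window forces it to be $(-1)^{2k-1} = -1$, or some similar cyclic obstruction. The main obstacle will be organizing this cyclic determinant computation cleanly; I expect it is most transparently done by choosing, for each vertex $v_i$, the unique (up to scalar) linear functional vanishing on the $k-2$ vectors $\lambda(v_{i+1}),\dots,\lambda(v_{i+k-2})$ and analyzing the resulting recursion, or equivalently by passing to the Shephard/Gale side and using Proposition~\ref{prop:Shephardandproj}: projecting away one vertex yields a fan over $\link_K\{v_i\} = P_{[2k-3]}$-type complex with a ghost vertex, and inductively one is forced down toward a pentagon configuration that cannot be ``un-wedged'' consistently when $2k-1 \ge 7$. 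I would present whichever of these two routes gives the shortest contradiction; the parity/cyclic-sign obstruction on the $(2k-1)$-cycle of unimodularity conditions is the natural candidate, and the non-triviality is entirely in showing the odd length $2k-1$ (for $k\ge 4$, as opposed to $k=3$) is what breaks it.
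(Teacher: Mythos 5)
There is a genuine gap, and it sits at the center of your argument. First, your combinatorial setup is wrong: the maximal faces of $K=\partial P_{[2k-1]}^{*}$ have $n=2k-4$ vertices, not $k-1$. By Corollary~\ref{cor:galeforpolytopes} a maximal face is the complement of a \emph{minimal coface}, which is a $3$-element subset of the $(2k-1)$-gon whose Gale points form a triangle containing the origin; the consecutive $k$-tuples you describe are the minimal \emph{non-faces}, a different notion. Consequently the unimodularity conditions are not indexed by a cyclic family of ``sliding windows of $k-1$ consecutive vertices'' — for the heptagon they are the $14$ four-element sets recorded in Table~\ref{tab:p7vertexsigns}, which have no such cyclic window structure. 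Second, and more seriously, the proposed parity contradiction is asserted rather than proved, and it cannot follow formally from the odd length $2k-1$ of the cycle of minimal non-faces: the pentagon ($k=3$) has exactly the same odd-cycle structure yet supports infinitely many toric manifolds (Lemma~\ref{lem:toricoverpentagon}). You acknowledge that distinguishing $k\ge 4$ from $k=3$ is ``the non-triviality,'' but that is precisely the content of the theorem, and no mechanism for it is supplied.

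For comparison, the paper's proof has two parts. The base case $k=4$ is an explicit computation: normalize $\lambda(1),\dots,\lambda(4)$ to the standard basis using the maximal simplex $\{1,2,3,4\}$ (after relabelling the heptagon's vertices as $1,2,5,3,4,6,7$ so that this set is a face), use positivity of a fan-giving characteristic map to fix the signs of all $14$ maximal minors, and show the resulting system of Diophantine equations has no integer solution. The case $k>4$ is then reduced \emph{downward to the heptagon}, not to the pentagon: by Lemma~\ref{lem:GaleandVertexfigure}, $P_{[2k-1]}$ has a face isomorphic to $P_{[7]}$, and a non-singular fan would project to a non-singular fan over $P_{[7]}$. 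Your alternative route via projections is the right idea for this reduction step, but it still requires the heptagon impossibility as input, which your proposal does not establish.
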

    \begin{proof}
        We are first going to show that there is no toric manifold over $P_{[7]}$. We label the vertices of the heptagon which is a standard Gale diagram of $P_{[7]}$ by $1,2,5,3,4,6,7$ in counterclockwise order. Then the facets $1,2,3,$ and $4$ intersect at a vertex and thus one can assume that the characteristic map of $P_{[7]}$ is given by the matrix
        \[
            \lambda = \begin{pmatrix}
                1   & 0 & 0 & 0 & a & e & i \\
                0 & 1 & 0 & 0 & b & f & j   \\
                0 & 0 & 1 & 0 & c & g & k   \\
                0 & 0 & 0 & 1 & d & h & l
            \end{pmatrix},
        \]
        where $a,b,c,d,e,f,g,h,i,j,k,$ and $l$ are integers. We denote by $\lambda_\alpha$ the $\alpha$-th column vector of $\lambda$.
        \begin{table}
            \[
                \begin{array}{|c|c||c|c|}
                  \hline
                  \text{facet} & \text{sign} & \text{facet} & \text{sign} \\ \hline
                  1234 & + & 1457 & - \\
                  1236 & - & 2347 & + \\
                  1246 & + & 2367 & - \\
                  1347 & - & 2456 & - \\
                  1356 & - & 2457 & + \\
                  1357 & + & 2567 & + \\
                  1456 & + & 3567 & - \\
                  \hline
                \end{array}
            \]
            \caption{Facets of $P^*_{[7]}$ and their signs.}\label{tab:p7vertexsigns}
        \end{table}
        Note that every fan-giving characteristic map is positive in the sense of Definition~\ref{def:positivechr}. Suppose $\alpha\beta\gamma\delta$ and $\alpha\beta\gamma\epsilon$ are two simplices of $P^*_{[7]}$ sharing the triangle $\alpha\beta\gamma$. Then the positiveness of $\lambda$ implies that
        \[
            \det\begin{pmatrix}
            \lambda_\alpha & \lambda_\beta & \lambda_\gamma & \lambda_\delta
            \end{pmatrix}\cdot
            \det\begin{pmatrix}
            \lambda_\alpha & \lambda_\beta & \lambda_\gamma & \lambda_\epsilon
            \end{pmatrix} = -1.
        \]
        Table~\ref{tab:p7vertexsigns} indicates every simplex $\alpha\beta\gamma\delta$ of $P^*_{[7]}$ and its sign $\operatorname{sgn}(\alpha\beta\gamma\delta)$ so that
        $$
            \det\begin{pmatrix}
            \lambda_\alpha & \lambda_\beta & \lambda_\gamma & \lambda_\delta
            \end{pmatrix} = \operatorname{sgn}(\alpha\beta\gamma\delta)\cdot 1.
        $$
        After substituting $g=h=i=j = -1$, we obtain the following system of Diophantine equations:
        \begin{align*}
            b+fd &= 1 \\
            bl+d &= -1 \\
            b+cf &= -1 \\
            bk+c &= -1 \\
            el &= 0 \\
            a+ce &= -1 \\
            ak+c &= -1
        \end{align*}
        \[
            \begin{vmatrix}
                a & e & -1 \\
                c & -1 & k \\
                d & -1 & l
            \end{vmatrix} = -1
        \]
        \[
            \begin{vmatrix}
                a & e & -1 \\
                b & f & -1 \\
                d & -1 & l
            \end{vmatrix} = -1.
        \]
        It is not very hard to show that this has no integral solution, showing that there is no toric manifold over $P_{[7]}$. For $k>4$, apply Lemma~\ref{lem:GaleandVertexfigure} to see that any facet of $P_{[2k-1]}$ is $[2,1,\dotsc,1]$ with $(k-2)$ $1$'s. Since $P_{[2k-3]}$ is a facet of $[2,1,\dotsc,1]$, we conclude that for every $k>4$, $P_{[2k-1]}$ has a face isomorphic to $P_{[7]}$. If $P_{[2k-1]}$ admitted a non-singular fan over it, then its projected fan to $P_{[7]}$ would be non-singular, which is a contradiction.
    \end{proof}

    So far, we have shown that every possible underlying simplicial complex (or, equivalently, the dual of the boundary of a simple polytope) is $\partial P(J)^*$ either $P=[2,2,2]$ or $P=P_{[5]}$. Since we already dealt with the cube $[2,2,2]$, we are remaining with the pentagon.
    \begin{convention}
        When $P$ is a pentagon, its standard Gale diagram is also a pentagon and there is danger of confusion of facet labeling. When we denote $P$ by $P_5$, we assume that its facets are labeled by $1,2,3,4,5 \in \Z_5$ such that two facets $i$ and $j$ intersects if and only if $j-i=\pm 1$.
    \end{convention}
    To apply Proposition~\ref{prop:FanandWedge} for $P(J)$, the first step would be the following lemma.
    \begin{lemma}\label{lem:toricoverpentagon}
        Up to rotational symmetry of $P = P_{5}$ and basis change of $\Z^2$, any complete non-singular fan over $P$ is described by the following characteristic matrix
        \[
            \lambda_d:=\begin{pmatrix}
                1   &   0   &   -1  &   -1  &   d\\
                0   &   1   &   1   &   0   &   -1
            \end{pmatrix}
        \]
        for an arbitrary $d\in \Z$. Suppose each column is numbered by $i+1$, $i+2$, $i+3$, $i+4$ and $i$, from left to right, respectively. We say the corresponding characteristic map (or fan) is of type $(i,d)\in\Z_5\times\Z$ and we write its Davis-Januszkiewicz equivalence class by $(i,d)$. Every class $(i,d)$ is distinct each other except the five cases $(i,0)=(i+1,1)$, $i\in\Z_5$.
    \end{lemma}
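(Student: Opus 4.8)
The plan is to read this as the classification of complete non-singular fans in $\R^2$ with five rays and to run the whole argument through the integer sequence of self-intersection numbers of such a fan. Write $K=\partial P_5^\ast$, the $5$-cycle on the vertex set $\Z_5$. A complete non-singular fan over $K$ is the same thing as a cyclic list $w_1,\dots,w_5$ of primitive vectors in $\Z^2$, one per vertex, taken in counterclockwise order, with $\det(w_j,w_{j+1})=1$ for every $j\in\Z_5$ — the determinant is forced to be $+1$, not merely $\pm1$, because the five cones wind exactly once around the origin, so each angular gap lies in $(0,\pi)$. Expanding $w_{j+1}$ in the basis $\{w_{j-1},w_j\}$ and using this determinant condition produces integers $a_j$ with $w_{j-1}+w_{j+1}=a_jw_j$, and composing these relations around the cycle gives $A_1\cdots A_5=I$ in $SL_2(\Z)$ with $A_j=\left(\begin{smallmatrix}0&-1\\1&a_j\end{smallmatrix}\right)$; hence the classical identity $a_1+\cdots+a_5=3\cdot5-12=3$.

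The real content is the \emph{normal form}, and I would obtain it by a single toric blow-down. From $\sum a_j=3>0$ some $a_j\ge1$; and since the only complete non-singular fans in $\R^2$ with at most four rays are, up to $GL_2(\Z)$, those of $\CP^2$ and of the Hirzebruch surfaces $\mathbb{F}_c$, a five-ray fan is never minimal, so in fact some $a_j=1$. After a rotation of the labels, take $j=3$; then $w_3=w_2+w_4$, and deleting $w_3$ yields a complete non-singular fan over the $4$-cycle whose self-intersection sequence is $(a_1,a_2-1,a_4-1,a_5)$. Being a four-ray fan, this is the fan of some $\mathbb{F}_c$, so its sequence is a cyclic rotation of $(c,0,-c,0)$; substituting each of the four rotations back and solving for the $a_j$ shows in every case that $(a_1,\dots,a_5)$, after one more rotation of $\Z_5$, equals $(d,1,1,1-d,0)$ for some $d\in\Z$. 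Finally, the self-intersection sequence together with the choice $(w_{j_0},w_{j_0+1})=(e_1,e_2)$ — available via a unique element of $SL_2(\Z)$ — recovers all $w_j$ from the recurrence $w_{j+1}=a_jw_j-w_{j-1}$, and carrying this out for $(d,1,1,1-d,0)$ reproduces the columns of $\lambda_d$ exactly. Conversely each $\lambda_d$ is a genuine complete non-singular fan over $P_5$: consecutive columns have determinant $1$ and $(d,-1)$ always lies strictly between $(-1,0)$ and $(1,0)$.

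For the coincidences, the key point is that a global $GL_2(\Z)$-change of basis fixes the self-intersection sequence $(a_j)_{j\in\Z_5}$, because the defining relation $w_{j-1}+w_{j+1}=a_jw_j$ is $GL_2(\Z)$-equivariant; and conversely the normalization just used shows that a complete non-singular fan over $K$ is determined up to $GL_2(\Z)$ by that sequence. So $(i,d)$ and $(i',d')$ are D-J equivalent exactly when their self-intersection sequences agree as functions on $\Z_5$. A direct computation gives that the sequence of type $(i,d)$ takes the value $0$ at $i$, $d$ at $i+1$, $1$ at $i+2$ and $i+3$, and $1-d$ at $i+4$. If $d\notin\{0,1\}$ its unique zero sits at $i$, so agreement forces $i=i'$ and then $d=d'$; if $d=0$ the zeros are at $\{i,i+1\}$ and if $d=1$ at $\{i-1,i\}$, with the block of three ones on the complementary positions in both cases, so $(i,0)$ and $(i',1)$ agree iff $\{i,i+1\}=\{i'-1,i'\}$, i.e.\ $i'=i+1$, and there are no further coincidences — precisely the claimed list $(i,0)=(i+1,1)$, $i\in\Z_5$.

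The step I expect to fight with is the realizability input in the middle paragraph: "some $a_j=1$" and "a four-ray complete non-singular fan is the fan of some $\mathbb{F}_c$" are exactly where the classical structure theory of toric surfaces (iterated torus-invariant blow-ups of $\CP^2$ and $\mathbb{F}_c$, i.e.\ the toric minimal model program) must be invoked or re-derived; everything else is bookkeeping. A more self-contained but heavier route that sidesteps this is to normalize $w_1=e_1$, $w_2=e_2$, write $w_3=(-1,p)$, $w_5=(q,-1)$, $w_4=(r,s)$, impose the two determinant equations $-s-pr=-r-qs=1$ together with the requirement that the five rays wind once around the origin, and solve the resulting constrained Diophantine system directly, in the style of the proof of Theorem~\ref{thm:notoricvariety}.
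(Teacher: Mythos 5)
Your proof is correct, but it runs along a genuinely different track from the paper's. The paper normalizes the first two columns to $e_1,e_2$, writes the remaining three columns with unknown integer entries, extracts the determinant equations $-c-ab=1$ and $-b-cd=1$, and classifies the integer solutions by a case analysis (including ruling out five sporadic pairs $(a,b)$ by checking they are not fan-giving) --- i.e., exactly the ``heavier, self-contained route'' you sketch in your final paragraph. Your main argument instead encodes the fan in its self-intersection sequence $(a_j)_{j\in\Z_5}$, blows down a ray with $a_j=1$ onto a Hirzebruch fan, and reads the normal form $(d,1,1,1-d,0)$ off the four rotations of $(c,0,-c,0)$; the bookkeeping is all right (the sequence of $\lambda_d$ is indeed $0,d,1,1,1-d$ at positions $i,i+1,\dotsc,i+4$, the recurrence applied to $(d,1,1,1-d,0)$ returns exactly the columns of $\lambda_d$, all four rotations reduce to that form, and the zero-set analysis yields precisely the coincidences $(i,0)=(i+1,1)$). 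What your route buys is conceptual transparency --- $d$ is visibly a Hirzebruch parameter, and fan-givingness never has to be checked by hand since everything is produced by toric blow-down, whereas the paper must separately verify which solutions of its Diophantine system are fan-giving. What it costs is the input you already flag: ``some $a_j=1$'' for a five-ray complete non-singular fan does not follow merely from knowing that the minimal models are $\CP^2$ and $\mathbb{F}_c$; you need the toric-specific structure theorem that a smooth complete fan with at least five rays contains a ray equal to the sum of its two neighbours (a torus-invariant $(-1)$-curve), which is classical (Fulton \S 2.5, Oda) but must be cited or re-derived, and whose proof is of roughly the same length as the paper's case analysis. With that citation supplied your argument is complete; without it, that single sentence is the only gap.
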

    \begin{proof}
        A proof is given by a direct calculation. By basis change and positiveness of the characteristic map, we can assume that the characteristic matrix has the form
        \[
            \lambda = \begin{pmatrix}
                1   &   0   &   -1  &   b  &   d\\
                0   &   1   &   a  &   c   &   -1
            \end{pmatrix}
        \]
        and we have the relations
        \begin{align*}
            -c -ab &= 1 \\
            -b -cd &= 1.
        \end{align*}
        So we obtain
        \[
            c = -ab - 1
        \]
        and
        \[
            d = \frac{1+b}{1+ab} \quad\text{if } ab\ne -1.
        \]
        Consider the following cases:
        \begin{enumerate}
            \item $a=0$. Then
            \[
                \lambda = \begin{pmatrix}
                    1   &   0   &   -1  &   b  &   1+b\\
                    0   &   1   &   0  &   -1   &   -1
                \end{pmatrix}.
            \]
            \item $b=0$. Then
            \[
                \lambda = \begin{pmatrix}
                    1   &   0   &   -1  &   0  &   1 \\
                    0   &   1   &   a  &   -1   &   -1
                \end{pmatrix}.
            \]
            \item $a=1$, $b\ne -1$. Then
            \[
                \lambda = \begin{pmatrix}
                    1   &   0   &   -1  &   b  &   1 \\
                    0   &   1   &   1  &   -1-b   &   -1
                \end{pmatrix}.
            \]
            \item $b=-1$, $a\ne 1$. Then
            \[
                \lambda = \begin{pmatrix}
                    1   &   0   &   -1  &   -1  &   0 \\
                    0   &   1   &   a  &   -1+a   &   -1
                \end{pmatrix}.
            \]
            \item $a=1$, $b=-1$. Then
            \[
                \lambda = \begin{pmatrix}
                    1   &   0   &   -1  &   -1  &   d \\
                    0   &   1   &   1  &   0   &   -1
                \end{pmatrix}.
            \]
        \end{enumerate}
        For these cases, check that $\lambda$ is fan-giving for any $a$, $b$, or $d\in\Z$. It is easy to see these five cases are equivalent up to rotation of $P_5$ and basis change. The fact that the classes $(i,d)$ are distinct each other except $(i,0)=(i+1,1)$, $i\in\Z_5$ is also easily shown.

        For remaining possibility, assume that $a\ne 0,1$ and $b\ne 0,-1$. Since $d$ is an integer, one has the inequality $   |1+b|\ge|1+ab| $. By squaring each hand side, we get
        \[
            b(a-1)\left[(a+1)b + 2\right] \le 0.
        \]
        If $a>1$, the inequality above has no integer solution, and therefore we have $a<0$. Observe that there are five possibilities of $(a,b)$ for $d = (1+b)/(1+ab)$ to be an integer for $a<0$, that is,
        \[
            (a,b)=(-1,2),(-1,3),(-2,1),(-2,2),(-3,1),
        \]
        and $\lambda$ is not fan-giving for any of them.
    \end{proof}

    For a given integral vector $J=(a_1,a_2,\dotsc,a_5)$, $a_i\ge 1$, we know that $P_5(J) = P_5(a_1,a_2,\dotsc,a_5) = [a_1,a_3,a_5,a_2,a_4]$ up to symmetry of the pentagon. First, assume that $a_1 = \dotsb = a_5 = 1$ and we are given a characteristic matrix $\lambda_d = (v_1\; v_2\; v_3\; v_4 \; v_5)$ where $v_i$ is the $i$-th column vector of $\lambda_d$. Suppose we perform a wedge operation on the facet $3$ for example and rename facets by $1,2,3_1,3_2,4$, and $5$. Let $\lambda$ be a characteristic matrix for the wedged polytope $\wed_3 P_5 = P_5(1,1,2,1,1)$ and assume that $\proj_{3_1}\lambda = \lambda_d$. See Case III of the proof of Lemma~\ref{lem:wedge} and one knows that the set of facets $\{i,i+1,3_1\}$, $i\in\Z_5$ corresponds to a vertex of $P_5(1,1,2,1,1)$ whenever $i\le 3$ and $i+1\le 3$. By convention, we choose $i$ so that neither $i$ nor $i+1$ intersects $3$, so in this case $i=5$ and we further choose a basis of $\Z^3$ such that
    \begin{enumerate}
        \item $\lambda(3_1) = (0,0,1)^T$,
        \item $\lambda(i) = (v_i^T \; 0)^T$ and $\lambda(i+1) = (v_{i+1}^T \; 0)^T$.
    \end{enumerate}
    Then the matrix $\lambda$ should look like the following:
    \[
        \lambda=\left(\,\begin{matrix}
            1 & 2 & 3_1 & 3_2 & 4 & 5 \\ \hline
            v_1 & v_2 & 0 & v_3 & v_4 & v_5 \\
             & &        0 &      & & \\ \hline
             0  & n_2 & 1 & n_3 & n_4 & 0
        \end{matrix}\,\right),
    \]
    where $n_j$ is the third entry of the column vector corresponding to the facet $j$ for $j\ne i$. The integer $n_3$ must be $-1$ since
    \[
        \det \left( \lambda(i)\;\lambda(i+1)\;\lambda(3_2)\right) = -\det \left( \lambda(i)\;\lambda(i+1)\;\lambda(3_1)\right) = -1
    \]
    by positiveness of $\lambda$. Let us call $n_{i-1}$ and $n_{i+1}$ be the \emph{unknowns} of the third row. This observation works for any $i\in\Z_5$ and in general one can construct a characteristic matrix for $P(J)$ starting from $\lambda_d$ by repeatedly adding a row and a column. One thing more, note that $\lambda$ is a canonical extension in the sense of \cite{Ewa86} if $n_2 = n_4 = 0$.

    From now on, let us check when $\lambda$ is fan-giving for given $i\in\Z_5$.

    \begin{enumerate}
        \item $i=1$. Then
         \[
            \lambda=\left(\,\begin{matrix}
                0 & 1 & 0 & -1 & -1 & d \\
                0 & 0 & 1 &  1 & 0  & -1 \\
                1 &-1 & n_2 &  0& 0 & n_5
            \end{matrix}\,\right).
        \]
        To compute $\proj_{1_2}\lambda$, add the third row to the first one and delete the second column and third row and one obtains
        \[
            \begin{pmatrix}
                1 & n_2 & -1 & -1 & d+n_5 \\
                0 & 1 & 1 & 0 & -1
            \end{pmatrix}.
        \]
        This characteristic matrix is fan-giving if and only if $n_2 = 0$.
        \item $i=2$. Then
        \[
            \lambda = \begin{pmatrix}
                1 & 0 & 0 & -1 & -1 & d \\
                0 & 0 & 1 & 1 & 0 & -1 \\
                n_1 & 1 & -1 & n_3 & 0 & 0
            \end{pmatrix}
        \]
        and a similar calculation gives $n_3 = 0$ and $dn_1 = 0$.
        \item $i=3$. Then
        \[
            \lambda = \begin{pmatrix}
                1 & 0 & 0 & -1 & -1 & d \\
                0 & 1 & 0 & 1 & 0 & -1 \\
                0 & n_2 & 1 & -1 & n_4 & 0
            \end{pmatrix}.
        \]
        In this case $n_2 = 0$ and $n_4(d-1) = 0$.
        \item $i=4$. Then
        \[
            \lambda = \begin{pmatrix}
                1 & 0 & -1 & 0 & -1 & d \\
                0 & 1 & 1 & 0 & 0 & -1 \\
                0 & 0 & n_3 & 1 & -1 & n_5
            \end{pmatrix}
        \]
        and we obtain $n_3 = 0$.
        \item $i=5$. Then
        \[
            \lambda = \begin{pmatrix}
                1 & 0 & -1 & -1 & 0 & d \\
                0 & 1 & 1 & 0 & 0 & -1 \\
                n_1 & 0 & 0 & n_4 & 1 & -1
            \end{pmatrix}
        \]
        and one has $dn_1 = 0$ and $(d-1)n_4 = 0$.
    \end{enumerate}
    Suppose, for example, that $i=2$ and $n_3 = d = 0$. Then the characteristic matrix $\lambda$ has the form
    \[
        \lambda = \begin{pmatrix}
            1 & 0 & 0 & -1 & -1 & 0 \\
            0 & 0 & 1 & 1 & 0 & -1 \\
            n_1 & 1 & -1 & 0 & 0 & 0
        \end{pmatrix}.
    \]
    Since the D-J classes $(5,0)$ and $(1,1)$ are the same, $\lambda$ is D-J equivalent to
    \[
        \begin{pmatrix}
            1 & 0 & 1 & 0 & -1 & -1 \\
            1 & 0 & 0 & 1 & 1 & 0 \\
            n_1 & 1 & -1 & 0 & 0 & 0
        \end{pmatrix},
    \]
    which is exactly the case $i=1$ with columns re-labeled by $5,1_1,1_2,2,3,4$ and $n_5$ is replaced by $n_1$. In fact, up to rotation of $P$, we do not need to consider the cases $n_j \ne 0$ for $j\ne 5$.

    The next step is to deal with non-singular characteristic maps over $P_5(a_1,\dotsc,a_5)$ when $a_1+\dotsb+a_5 = 7$. Each of them corresponds to a twice wedged pentagon. This time, let us omit canonical extensions and assume every nonzero unknown of each row lies in the column $5$. Then there are three cases:
    \begin{enumerate}
        \item wedged twice at 4;
        \item wedged twice at 1;
        \item and wedged 4 and 1.
    \end{enumerate}
    When wedged twice at 4, the matrix has the form
    \[
        \lambda = \begin{pmatrix}
            1 & 2 & 3 & 4_1 & 4_2 & 4_3 & 5 \\ \hline
            1 & 0 & -1 & 0 & 0 & -1 & d \\
            0 & 1 & 1 & 0 & 0  & 0 & -1 \\
            0 & 0 & 0 & 1 & 0 & -1 & n_5 \\
            0 & 0& 0 & 0 & 1 & -1 & m_5
        \end{pmatrix}.
    \]
    Reminding Corollary~\ref{cor:projectionofKJ}, note that this matrix has three possible projections over the pentagon: $\proj_{\{4_1,4_2\}}\lambda,\,\proj_{\{4_1,4_3\}}\lambda$, and $\proj_{\{4_2,4_3\}}\lambda$. Among these, $\proj_{\{4_1,4_2\}}\lambda = \lambda_d$ and the other two are also fan-giving. The case wedged twice at 1 is similarly done. For the case wedged 1 and 4, we write down the matrix
    \[
        \lambda = \begin{pmatrix}
            1_1& 1_2 & 2 & 3 & 4_1 & 4_2 & 5 \\ \hline
            0 & 1 & 0 & -1 & 0  & -1 & d \\
            0 & 0 & 1 & 1 & 0  & 0 & -1 \\
            0 & 0 & 0 & 0 & 1 & -1 & n_5 \\
            1 & -1 & 0 & 0 & 0 & 0 & m_5
        \end{pmatrix}
    \]
    and consider its four projections which are characteristic map for the pentagon. Here, let us compute $\proj_{\{1_2,4_2\}}\lambda$ skipping the other easier three. By adding the first row to the fourth row to get
    \[
        \begin{pmatrix}
            1_1& 1_2 & 2 & 3 & 4_1 & 4_2 & 5 \\ \hline
            0 & 1 & 0 & -1 & 0  & -1 & d \\
            0 & 0 & 1 & 1 & 0  & 0 & -1 \\
            0 & 0 & 0 & 0 & 1 & -1 & n_5 \\
            1 & 0 & 0 & -1 & 0 & -1 & m_5+d
        \end{pmatrix}.
    \]
    The following matrix $\proj_{1_2}\lambda$ is obtained by deleting the first row and the column $1_2$:
    \[
        \proj_{1_2}\lambda = \begin{pmatrix}
            1_1 & 2 & 3 & 4_1 & 4_2 & 5 \\ \hline
            0   & 1 & 1 & 0  & 0 & -1 \\
            0   & 0 & 0 & 1 & -1 & n_5 \\
            1   & 0 & -1 & 0 & -1 & m_5+d
        \end{pmatrix}.
    \]
    Now projecting it with respect to $4_2$ gives
    \[
        \proj_{\{1_2,4_2\}}\lambda = \begin{pmatrix}
            0 & 1 & 1 & 0 & -1 \\
            1 & 0 & -1 & -1 & m_5+d-n_5
        \end{pmatrix},
    \]
    which is fan-like for all $m_5,d,n_5$ and has type $(5,d+m_5-n_5)$.

    Now we are ready to deal with general $P_5(a_1,\dotsc,a_5)$. Up to rotation of $P$, we can assume that every nonzero unknowns lie in the column $5_j$ for some $1\le j\le a_5$. We start with $P_5$ and perform wedges at 1 $a_1-1$ times and continue wedging at 2 $a_2-1$ times and so on. In other words, we do the row-and-column adding in the order 1, 2, 3, 4, and 5.
    For convenience of notation, we write
    \[
        M_i := \begin{pmatrix}
              0 & \cdots & 0 & v_i\\
        \end{pmatrix}_{2\times a_i}
    \]
    where $i=1,\dotsc,5$ and $v_i$ is the $i$-th column of $\lambda_d$. Moreover, we write
    \[
        S_{a_i}:=\begin{pmatrix}
              1 &    &   0 &  -1  \\
                & \ddots  &    & \vdots   \\
               0   &   & 1 & -1
        \end{pmatrix}_{(a_i-1)\times a_i}
    \]
    and
    \[
        N_i := \begin{pmatrix}
             0 & \cdots & 0 & n_i \\
        \end{pmatrix}_{(a_i-1)\times a_5}
    \]
    for an arbitrary integral vector $n_i=(n_{i2},\dotsc,n_{ia_i})^T$. We put $n_{i1}=0$ for convention. Any pentagon in the 2-skeleton of $P(J)=P_5(a_1,\dotsc,a_5)$ can be labeled by an integral vector $(b_1,\dotsc,b_5)$, $1\le b_i\le a_i$, and each pentagon can be naturally identified with $P$.

    \begin{theorem}\label{thm:toricoverpentagon}
        Up to rotational symmetry of the pentagon $P_5$ and basis change, a toric manifold over $P_5(a_1,\dotsc,a_5)$ is determined by the following characteristic matrix
        \[
            \Lambda=\begin{pmatrix}
                M_1 & M_2 & M_3 & M_4 & M_5 \\
                S_{a_1} & 0 & 0 & 0 & N_1 \\
                0 & S_{a_2} & 0 & 0 & 0 \\
                0 & 0 & S_{a_3} & 0 & 0 \\
                0 & 0 & 0 & S_{a_4} & N_4 \\
                0 & 0 & 0 & 0 & S_{a_5}
            \end{pmatrix}_{(\sum_i{a_i}-3)\times\sum_i{a_i}},
        \]
        for arbitrary choice of $d$, $n_1$, and $n_4$. Further, the projection of $\Lambda$ over the pentagon labeled by $(b_1,\dotsc,b_5)$ has type $(5,d+n_{1b_1}-n_{4b_4})$.
    \end{theorem}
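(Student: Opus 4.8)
The plan is to prove the statement in two halves: that for \emph{every} choice of the integer $d$ and the integral vectors $n_1,n_4$ the displayed matrix $\Lambda$ is a non-singular fan-giving characteristic map over $P_5(a_1,\dots,a_5)$, and that, conversely, up to the symmetries of $P_5$ and a basis change of $\Z^{\sum a_i-3}$ every non-singular fan-giving characteristic map over this polytope has the displayed form. Both halves run through the wedge decomposition of $P_5(a_1,\dots,a_5)$: by Lemma~\ref{lem:pjandgale} this polytope is obtained from $P_5$ by a sequence of polytopal wedges, which we take in the order $1,2,3,4,5$ as in the paragraphs preceding the statement, so that Proposition~\ref{prop:chrwedge}, Proposition~\ref{prop:FanandWedge} and Corollary~\ref{cor:projectionofKJ} apply at each stage (the unimodularity hypothesis being automatic, since the edge joining the two new vertices of a wedge is a face of it and $\Lambda$ is non-singular) and reduce every question about $\Lambda$ to the corresponding question about its projections onto the pentagon copies $K_s\cong\partial P_5^\ast$ sitting inside $\partial P_5(J)^\ast$.

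For the first half I would compute all these pentagon projections explicitly. Fix a section $s$ choosing the $b_i$-th sub-vertex in the $i$-th block; the pentagon it spans is labelled $(b_1,\dots,b_5)$, and projecting $\Lambda$ with respect to the complementary simplex means deleting the corresponding rows and columns after using the blocks $S_{a_i}$ to clear them by elementary row operations. Each row of $S_{a_i}$ is a coordinate vector together with a trailing $-1$, which is exactly what is needed to carry out this elimination; the blocks $M_i$ then contribute the columns $v_i$ of $\lambda_d$, and the only entries surviving in the last coordinate come from $N_1$ and $N_4$, contributing $n_{1b_1}$ and $-n_{4b_4}$ (the sign produced by the trailing $-1$'s of $S_{a_5}$). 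Collecting terms, the projection equals $\lambda_{d+n_{1b_1}-n_{4b_4}}$ with columns in the standard order, i.e.\ is of type $(5,\,d+n_{1b_1}-n_{4b_4})$, which by Lemma~\ref{lem:toricoverpentagon} is non-singular and fan-giving for every value of the parameter. Since every pentagon projection of $\Lambda$ is non-singular and fan-giving, iterating Theorem~\ref{thm:mainthm}(1) and (3) down the wedge tower shows that $\Lambda$ itself is non-singular and fan-giving, which establishes the first half and, at the same time, the last sentence of the statement.

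For the second half, let $\Lambda$ be an arbitrary non-singular fan-giving characteristic map over $P_5(J)$. By Corollary~\ref{cor:projectionofKJ} it is determined by its pentagon projections, and by Lemma~\ref{lem:toricoverpentagon} each of these is of type $(i,d)$ for some $i\in\Z_5$; rotating $P_5$ and changing basis we may assume the projection onto the base pentagon (all $b_i=1$) is $\lambda_d$, of type $(5,d)$. Reconstructing $\Lambda$ one wedge at a time, the $M_i$ and $S_{a_i}$ blocks are forced exactly as in the single-wedge computation recorded before the statement, and the only freedom is in the ``unknown'' entries of the newly added rows. The five cases $i=1,\dots,5$ worked out there show that wedging a facet $i$ annihilates the unknowns in the column blocks of the two non-neighbours of $i$ and imposes relations $d\,n=0$ or $(d-1)\,n=0$ on the remaining two; using the Davis--Januszkiewicz identifications $(i,0)=(i+1,1)$ of Lemma~\ref{lem:toricoverpentagon} together with a rotation of $P_5$, every surviving nonzero unknown can be moved into the column block of facet $5$, where it persists only when the wedged facet is $1$ or $4$, i.e.\ one of the two neighbours of facet $5$. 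This is precisely the statement that the nonzero unknowns assemble into the blocks $N_1$ and $N_4$, and the reduction to the displayed form is complete.

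The step I expect to be the main obstacle is the normalization in the last paragraph: one must check that the combination of a rotation of $P_5$ with the equivalences $(i,0)\sim(i+1,1)$ can be performed \emph{simultaneously and consistently for all of the wedges}, rather than only resolving a single one, so that the different constraints produced by wedging different facets do not conflict --- this is what forces the specific order $1,2,3,4,5$ of the wedges and the choice to park all unknowns in the block of facet $5$. A smaller but real point is pinning down the sign with which $n_{4b_4}$ enters the type of the projected fan, which depends on the orientation conventions implicit in $\proj$ and on the trailing $-1$'s of the matrices $S_{a_i}$.
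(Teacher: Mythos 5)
Your proposal is correct and follows essentially the same route as the paper, whose ``proof'' of this theorem is precisely the preceding discussion: classify the pentagon types via Lemma~\ref{lem:toricoverpentagon}, reduce everything to pentagon projections by iterating Proposition~\ref{prop:chrwedge}, Proposition~\ref{prop:FanandWedge} and Corollary~\ref{cor:projectionofKJ} along the wedge tower taken in the order $1,2,3,4,5$, run the five single-wedge cases to kill or constrain the unknowns, and use the identifications $(i,0)=(i+1,1)$ plus a rotation to park all surviving unknowns in the column block of facet $5$. The only slips are cosmetic and do not affect the argument: the minus sign on $n_{4b_4}$ arises from the row operations clearing the trailing $-1$'s of $S_{a_4}$ (together with the entry of $v_4$ in the top block), not of $S_{a_5}$, and in the single-wedge analysis the unknowns sit in the columns of the two \emph{neighbours} of the wedged facet (one being forced to vanish, the other surviving subject to a $d$-dependent relation), not of its non-neighbours.
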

    \begin{remark}\label{rem:existenceandclassificationoftoricandtopoltoric}
        For a standard Gale diagram $P_{2k-1}$ for a simple polytope $P$, we know that there is a toric manifold over $P$ if and only if $k=2$ and $3$. A result in \cite{Ero08} states that there is a topological toric manifold (or, equivalently, a quasitoric manifold) over $P$ if and only if $k=2,3,4$. We have classified toric manifolds of Picard number 3, but the classification of topological toric manifolds over $K$ when $\Pic(K)=3$ is somewhat complicated to calculate to be contained here. It will be covered elsewhere in the future.
    \end{remark}

\section{Application: Projectivity of toric varieties}\label{sec:projectivity}
    By Theorem~\ref{thm:mainthm}, there is some kind of good relationship between toric objects (i.e., topological toric manifolds, quasitoric manifolds, and toric manifolds) and wedges of simplicial complexes. It is also true for the category of toric orbifolds or the category of complete (not necessarily non-singular) rational fans. We can show this is not true for the category of projective toric orbifolds or that of complete strongly polytopal rational fans using Proposition~\ref{prop:ShephardandWedge}. To be more precise, there exists a complete non-strongly polytopal fan $\Sigma$ over $\wed_v(K)$ whose projections $\proj_{v_1}(\Sigma)$ and $\proj_{v_2}(\Sigma)$ are strongly polytopal.

    \begin{example}
        Define the characteristic map $(\wed_1P_{[7]},\;\lambda)$ by the matrix
        $$
            \lambda = \begin{pmatrix*}[r]
                1_1 & 1_2 & 2 & 3 & 4 & 5 & 6 & 7 \\ \hline
                -16 & 16 & -1 & 0 & 0 & 0 & 0 & 1 \\
                -33 & 83 & -6 & 0 & 0 & 0 & 1 & 0 \\
                -37 & 127 & -10 & 0 & 0 & 1 & 0 & 0 \\
                -33 & 123 & -10 & 0 & 1 & 0 & 0 & 0 \\
                -13 & 63 & -6 & 1 & 0 & 0 & 0 & 0
            \end{pmatrix*}
        $$
        which is fan-giving \footnote{This can be easily shown using a computer program such as the Maple package \emph{Convex}\cite{Fra_Conv}.} and hence defines a complete fan $\Sigma$. To compute its Shephard diagram, we multiply $10$ to last 6 columns respectively, obtaining
        \[
            X = \begin{pmatrix*}[r]
                1_1 & 1_2 & 2 & 3 & 4 & 5 & 6 & 7 \\ \hline
                -16 & 16 & -10 & 0 & 0 & 0 & 0 & 10 \\
                -33 & 83 & -60 & 0 & 0 & 0 & 10 & 0 \\
                -37 & 127 & -100 & 0 & 0 & 10 & 0 & 0 \\
                -33 & 123 & -100 & 0 & 10 & 0 & 0 & 0 \\
                -13 & 63 & -60 & 10 & 0 & 0 & 0 & 0
            \end{pmatrix*}
        \]
        the sum of whose column is zero and therefore a Shephard diagram for $\Sigma$ can be computed by the matrix
        \[
            \begin{pmatrix*}[r]
                \widehat{1}_1 & \widehat{1}_2 & \widehat{2} & \widehat{3 }& \widehat{4} & \widehat{5} & \widehat{6} & \widehat{7} \\ \hline
                 -2.5& 2.5& 4& 5& 1& -1& -5& -4 \\
                 5.5& 5.5& 5& 2.5& 0.5& 0.5& 2.5& 5
            \end{pmatrix*}.
        \]
%    \begin{figure}[h]
%        \psset{unit=25pt}
%        \begin{pspicture}(-6,0)(6,6)
%        \psline[linewidth=1.5pt](-1,.5)(4,5)(-5,2.5)(5,2.5)(-4,5)(1,.5)
%        \psline[linewidth=1.5pt](1,.5)(-2.4,5.5)(-1,.5)(1,.5)(2.4,5.5)(-1,.5)
%        \rput[tl](1,.5){$\widehat{4}$}
%        \rput[tr](-1,.5){$\widehat{5}$}
%        \rput[bl](4,5){$\widehat{2}$}
%        \rput[r](-5,2.5){$\widehat{6}$}
%        \rput[l](5,2.5){$\widehat{3}$}
%        \rput[br](-4,5){$\widehat{7}$}
%        \rput[br](-2.4,5.5){$\widehat{1}_1$}
%        \rput[bl](2.4,5.5){$\widehat{1}_2$}
%        \pspolygon[linewidth=1.5pt,fillstyle=solid,fillcolor=red](-1.808247423, 3.386597938)(-1.097196262, 3.584112150)(-0.3600000003, 2.500000000)(-1.222222222, 2.500000000)(-1.673796792, 2.906417112)
%        \pspolygon[linewidth=1.5pt,fillstyle=solid,fillcolor=blue](1.808247423, 3.386597938)(1.097196262, 3.584112150)(0.3600000003, 2.500000000)(1.222222222, 2.500000000)(1.673796792, 2.906417112)
%        \end{pspicture}
%        \caption{A Shephard diagram for a complete non-strongly polytopal fan over $\wed_1P_{[7]} = [2,1,1,1,1,1,1]$.}
%        \label{fig:ShephardofNonpolytopal}
%    \end{figure}
    \begin{figure}[h]
        \begin{tikzpicture}[scale=0.9, ultra thick]
            \coordinate [label=above:{$\widehat{1_1}$}    ](0) at (-2.4,5.5);
            \coordinate [label=above:{$\widehat{1_2}$}    ](1) at (2.4,5.5);
            \coordinate [label=above right:{$\widehat{2}$}](2) at (4,5);
            \coordinate [label=right      :{$\widehat{3}$}](3) at (5,2.5);
            \coordinate [label=below right:{$\widehat{4}$}](4) at (1,.5);
            \coordinate [label=below left :{$\widehat{5}$}](5) at (-1,.5);
            \coordinate [label=left       :{$\widehat{6}$}](6) at (-5,2.5);
            \coordinate [label=above left :{$\widehat{7}$}](7) at (-4,5);
            \begin{scope}[fill=red]
                \clip (5)--(0)--(4)--cycle;
                \clip (2)--(6)--(3)--cycle;
                \fill (4)--(7)--(3)--cycle;
            \end{scope}
            \begin{scope}[fill=blue]
                \clip (5)--(1)--(4)--cycle;
                \clip (6)--(3)--(7)--cycle;
                \fill (5)--(2)--(6)--cycle;
            \end{scope}
            \draw (4)--(7)--(3)--(6)--(2)--(5)
                (5)--(0)--(4)--cycle
                (5)--(1)--(4)--cycle;
        \end{tikzpicture}
        \caption{A Shephard diagram for a complete non-strongly polytopal fan over $\wed_1P_{[7]} = [2,1,1,1,1,1,1]$.}
        \label{fig:ShephardofNonpolytopal}
    \end{figure}
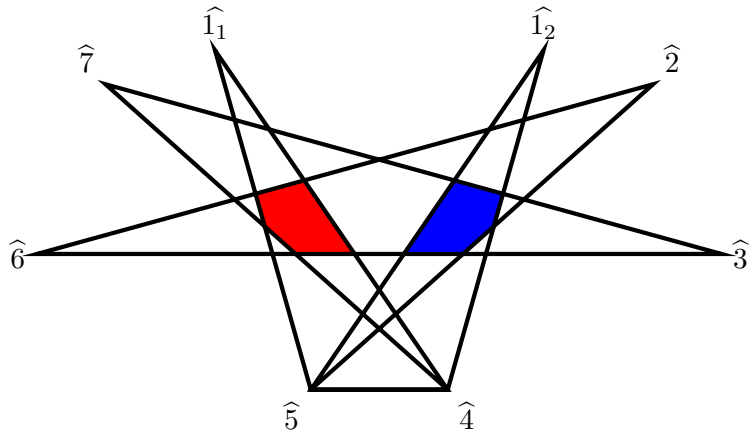
        An illustration for $\widehat{X}$ is given in Figure~\ref{fig:ShephardofNonpolytopal}.
%        Here, the cofaces are $1_i45$, $256$, $367$, $471_i$, $51_i2$, $623$, $734$, $1_i36$, $247$, $351_i$, $462$, $573$, $61_i4$, and $725$, $i=1,2$.
        Observe that, by Proposition~\ref{prop:ShephardandWedge}, $S(\proj_{1_1}\Sigma, \widehat{X}\setminus(\widehat{1}_{ 1}))$ is the blue region and $S(\proj_{1_2}\Sigma, \widehat{X}\setminus(\widehat{1}_{ 2}))$ is the red one which are nonzero respectively, so each projected fan is strongly polytopal. But $S(\Sigma,\widehat{X})$ is the intersection of the two sets and is empty and hence $\Sigma$ itself is not strongly polytopal. Note that there is no complete non-singular fan over $P_{[7]}$ by Theorem~\ref{thm:notoricvariety}. Therefore, $\Sigma$ is not non-singular. The authors do not know whether such an example of a non-singular fan exists or not.
    \end{example}

    \begin{question}
        Is there a complete non-singular non-projective fan $\Sigma$ over $\wed_{v}(K)$ whose projected fans $\proj_{v_{1}}\Sigma$ and $\proj_{v_{2}} \Sigma$ are projective?
    \end{question}
    In general, as one has seen, projectivity of projected fans of $\Sigma$ with respect to $v_{1}$ and $v_{2}$ does not guarantee projectivity of $\Sigma$ over  $\wed_{v}(K)$ . But in special cases of fans, one can prove their projectivity as we will see in the rest of this section.

    Let $(K,\lambda)$ be a fan-giving complete non-singular characteristic map of dimension $n$ and $M = M(K,\lambda)$ be the corresponding toric manifold. If $K$ has $m$ vertices, then the number $m-n$ is known as the \emph{Picard number} of the toric variety $M$. Since we have classified smooth toric varieties of Picard number $3$, we can try checking their projectivity using Proposition~\ref{prop:ShephardandWedge} and prove the following:

    \begin{theorem}\label{thm:picnum3isprojective}
        Every toric manifold of Picard number three is projective.
    \end{theorem}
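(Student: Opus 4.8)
The plan is to combine the classification of smooth toric varieties of Picard number three established above with Shephard's projectivity criterion and its behaviour under wedging (Proposition~\ref{prop:ShephardandWedge}).

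By that classification (Theorems~\ref{thm:notoricvariety} and~\ref{thm:toricoverpentagon}, together with the product‑of‑simplices case), the underlying complex of a toric manifold $M$ of Picard number three is $\partial P(J)^{\ast}$ for some $J$, where either $P=[2,2,2]$ is the $3$‑cube or $P=P_{[5]}$ is the pentagon. If $P=[2,2,2]$, then $P(J)=\prod_{i=1}^{3}\Delta^{n_i}$ is a product of three simplices, and every toric manifold over such a polytope is a generalized Bott manifold (as recalled in Section~\ref{sec:applications}); generalized Bott manifolds are iterated projective bundles over a point and hence projective. So it remains to treat $P=P_{[5]}$, where by Theorem~\ref{thm:toricoverpentagon} the fan $\Sigma$ of $M$ is the one given by the explicit characteristic matrix $\Lambda$ over $P_5(a_1,\dots,a_5)$, and we must show $\Sigma$ is strongly polytopal.

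For this I would run an induction on the number of wedgings using Proposition~\ref{prop:ShephardandWedge}. Choose generators $x_j$ of the $1$‑cones of $\Sigma$ with $\sum_j x_j=0$ and let $\widehat{X}$ be the associated Shephard diagram; it lies in $H\cong\R^{m-n-1}$, which is $\R^{2}$ since the Picard number is $3$. Peeling off one wedge at a time via Proposition~\ref{prop:ShephardandWedge}, and tracking the Shephard subdiagrams of projections via Proposition~\ref{prop:Shephardandproj}, one obtains
\[
    S(\Sigma,\widehat{X})=\bigcap_{(b_1,\dots,b_5)}S\bigl(\Sigma_{(b_1,\dots,b_5)},\ (\widehat{x}_{1,b_1},\widehat{x}_{2,b_2},\widehat{x}_{3,b_3},\widehat{x}_{4,b_4},\widehat{x}_{5,b_5})\bigr),
\]
where $(b_1,\dots,b_5)$ ranges over $\prod_i\{1,\dots,a_i\}$ and $\Sigma_{(b_1,\dots,b_5)}$ is the projection of $\Sigma$ onto the correspondingly labelled pentagon in the $2$‑skeleton of $P_5(a_1,\dots,a_5)$. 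Each $\Sigma_{(b_1,\dots,b_5)}$ is a complete non‑singular fan in $\R^2$, hence strongly polytopal, so each region on the right is nonempty --- concretely, it is the relative interior of the intersection of the five triangles spanned by consecutive triples of the corresponding Shephard pentagon, all of whose vertices lie in the single planar configuration $\widehat{X}$. Thus the theorem reduces to proving that these finitely many convex planar regions have a common point.

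This last step is the crux, and it is genuinely special to Picard number three: the example of $\wed_1 P_{[7]}$ in this section shows that for larger Picard number the analogous intersection can be empty even when each projection is projective. To handle it I would use the explicit form of $\Lambda$: the wedgings at facets $2$ and $3$ are trivial (the blocks $N_2,N_3$ vanish), so by the remark following Proposition~\ref{prop:ShephardandWedge} the Shephard points $\widehat{x}_{2,k}$ and $\widehat{x}_{3,k}$ are independent of $k$, and by Theorem~\ref{thm:toricoverpentagon} the pentagon fan $\Sigma_{(b_1,\dots,b_5)}$ has type $(5,\,d+n_{1b_1}-n_{4b_4})$, depending only on $b_1$ and $b_4$. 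Thus the finitely many pentagon fans occurring as projections differ only in a one‑parameter‑per‑cluster way, and the main lemma --- to be proved by a direct computation of the five consecutive‑triple triangles in $\R^2$ --- is that the central regions of the Shephard pentagons corresponding to any such admissible family of types have a common point. Granting this, and using that $v_1$ and $v_4$ are non‑adjacent in the pentagon so that the contributions of clusters $1$ and $4$ can be treated independently while the other clusters contribute fixed Shephard points, one obtains the desired common point in $S(\Sigma,\widehat{X})$; alternatively one may verify that every three of the regions $S(\Sigma_{(b_1,\dots,b_5)},\cdot)$ meet and invoke Helly's theorem in $\R^2$. Carrying out this planar book‑keeping is the technical heart of the proof.
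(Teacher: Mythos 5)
Your overall strategy coincides with the paper's: reduce to $P=[2,2,2]$ or $P=P_{[5]}$, handle the cube case separately, and for the pentagon apply Shephard's criterion by writing $S(\Sigma,\widehat X)$ as an intersection of the regions coming from the pentagon projections, exploiting that the wedgings in the columns without $N$-blocks are canonical extensions so that $\widehat 2$, $\widehat 3$ (and $\widehat 5$) are single points. For the cube you take a shortcut the paper does not: you invoke the known projectivity of generalized Bott manifolds rather than the paper's Shephard-diagram argument with dividing lines; that is legitimate for the smooth case, though the paper's version also covers singular fans over $\star_i\partial\Delta^{n_i}$.

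The gap is in the pentagon case: everything you write up to the ``main lemma'' is setup, and the lemma itself --- that the finitely many planar regions $S(\widehat X_{\mathbf i})$, $\mathbf i=(i_1,i_4)$, have a common point --- is exactly the content of the theorem and is left unproved. Neither ``direct computation of the five consecutive-triple triangles'' nor the Helly suggestion is carried out, and Helly does not obviously help, since verifying that every three of the regions meet is not easier than the general case. The paper closes this gap with three concrete geometric inputs you do not supply: (i) an explicit computation of a Shephard diagram of $\lambda_d$ showing $\widehat 5=(\widehat 2+\widehat 3)/2$; (ii) the deductions, from nonemptiness of $S(\widehat X_{\mathbf i})$ for each $\mathbf i$, that all $\widehat 1_{i_1}$ and $\widehat 4_{i_4}$ lie on one side of the line through $\widehat 2$ and $\widehat 3$ and that $\angle \widehat 1_{i_1}\widehat 5\widehat 3+\angle \widehat 4_{i_4}\widehat 5\widehat 2<2\pi$; and (iii) the extremal choice of $i_1$ and $i_4$ maximizing these two angles, for which a small sector of $\relint\conv\{\widehat 1_{i_1},\widehat 5,\widehat 4_{i_4}\}$ near $\widehat 5$ lies in $\relint\conv \widehat X|_Y$ for every coface $Y$. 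Your remark that clusters $1$ and $4$ ``can be treated independently'' gestures at (iii) but without (i) and (ii) there is no reason the two extremal triangles overlap near $\widehat 5$. As written, the proposal reduces the theorem to an unproved planar statement rather than proving it.
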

    The above theorem was originally proved by Kleinschmidt and Sturmfels \cite{KS91}, but their proof was  somewhat cumbersome case-by-case approach. Here we present a new proof.

    We can assume that $K = \partial P(J)^*$ where $P$ is either a cube $I^3$ or a pentagon $P_{[5]}$. First, let us consider when $P$ is a cube. That is, let $\Sigma$ be a complete (not necessarily non-singular) fan over $K = \star_{i=1}^3\partial\Delta^{n_i} = \partial\Delta^{n_1}\star\partial\Delta^{n_2}\star\partial\Delta^{n_3}$. We assume that every projected fan of $\Sigma$ over $\partial(I^3)^*$ is strongly polytopal. Label the vertices of $K$ by $1_0,1_1,1_2,\dotsc,1_{n_1},2_0,\dotsc,2_{n_2},3_0,\dotsc,3_{n_3}$ and let
    \[
        \widehat{X} = (\widehat{1}_0, \widehat{1}_1,\dotsc,\widehat{1}_{n_1},\widehat{2}_0, \dotsc, \widehat{2}_{n_2}, \widehat{3}_0, \dotsc, \widehat{2}_{n_3})
    \]
    be a Shephard diagram for $\Sigma$. Choose two sequences $\mathbf{a}=\{a_i\}$ and $ \mathbf{b}=\{b_i\}$, $i=1,2,3$, such that $0\le a_i < b_i \le n_i$ for all $i$. For such a choice, one has the corresponding projected fan over $\partial(I^3)^*$, denoted by $\Sigma_{\mathbf{ab}}$, which is determined by the vertex set $\{1_{a_1},1_{b_1},2_{a_2},2_{b_2},3_{a_3},3_{b_3}\}$. By Proposition~\ref{prop:Shephardandproj}, the subsequence
    \[
        \widehat{X}_{\mathbf{ab}} := (\widehat{1}_{a_1},\widehat{1}_{b_1}, \widehat{2}_{a_2},\widehat{2}_{b_2}, \widehat{3}_{a_3},\widehat{3}_{b_3})
    \]
    is a Shephard diagram of the projected fan and therefore the set
    \[
        S(\Sigma_{\mathbf{ab}},\widehat{X}_{\mathbf{ab}}) = \bigcap_{k_i=a_i\text{ or }b_i}\relint\conv(\widehat{1}_{k_1},\widehat{2}_{k_2}, \widehat{3}_{k_3})
    \]
    is nonempty. By Corollary~\ref{cor:galeforpolytopes}, every such $\conv(\widehat{1}_{k_1},\widehat{2}_{k_2}, \widehat{3}_{k_3})$ is a triangle. For simplicity of notation, let us temporarily write $\relint \conv \{v_1,\dotsc,v_m\} = v_1\cdots v_m$. For example, since
    \[
        \widehat{1}_{k_1} \widehat{2}_{k_2} \widehat{3}_{a_3}  \cap
        \widehat{1}_{k_1} \widehat{2}_{k_2} \widehat{3}_{b_3}   \ne \varnothing
    \]
    for every $k_1$ and $k_2$, every point $\widehat{3}_{j}$ lies on the same side of the line $\overline{\widehat{1}_{k_1} \widehat{2}_{k_2}}$ for any $k_1$ and $k_2$. Let us write $C_i := \conv \{i_{j}\mid 0\le j \le n_i\}$. Then actually one can find $d_1$ and $d_2$, $0\le d_1\le n_1$, $0\le d_2\le n_2$ such that the line $\overline{\widehat{1}_{d_1} \widehat{2}_{d_2}}$ divides the sets $C_3$ and $C_1 \cup C_2$ where $C_1\cup C_2$ is allowed to intersect $\overline{\widehat{1}_{d_1} \widehat{2}_{d_2}}$. Similarly, we can choose $e_2$, $e_3$, $f_3$, and $f_1$, such that the line $\overline{\widehat{2}_{e_2} \widehat{3}_{e_3}}$ divides $C_1$ and $C_2\cup C_3$ and the line $\overline{\widehat{3}_{f_3} \widehat{1}_{f_1}}$ divides $C_2$ and $C_3\cup C_1$. Then by projectivity assumption, the set
    \begin{multline*}
        Z:=\widehat{1}_{d_1} \widehat{2}_{d_2} \widehat{3}_{e_3} \cap \widehat{1}_{d_1} \widehat{2}_{d_2} \widehat{3}_{f_3} \cap
        \widehat{1}_{d_1} \widehat{2}_{e_2} \widehat{3}_{e_3} \cap \widehat{1}_{d_1} \widehat{2}_{e_2} \widehat{3}_{f_3}  \\
        \cap
        \widehat{1}_{f_1} \widehat{2}_{d_2} \widehat{3}_{e_3} \cap  \widehat{1}_{f_1} \widehat{2}_{d_2} \widehat{3}_{f_3} \cap
        \widehat{1}_{f_1} \widehat{2}_{e_2} \widehat{3}_{e_3} \cap \widehat{1}_{f_1} \widehat{2}_{e_2} \widehat{3}_{f_3}
    \end{multline*}
    is nonempty. For any triangle $ \widehat{1}_{k_1}\widehat{2}_{k_2}\widehat{3}_{k_3}$, its edges do not intersect $Z$ and we conclude that $Z\subseteq \widehat{1}_{k_1}\widehat{2}_{k_2}\widehat{3}_{k_3}$ proving that $\Sigma$ is a strongly polytopal fan.
    We remark two things. First, if $\Sigma$ is non-singular, then its corresponding toric variety is a generalized Bott manifold of stage three. Secondly, the argument above can be generalized to fans whose underlying complex is $K = \star_{i=1}^k\partial\Delta^{n_i}$ by replacing dividing lines for dividing hyperplanes. We state the result as a proposition.
    \begin{proposition}
        Let $\Sigma$ be a complete fan over the simplicial sphere $K = \star_{i=1}^k\partial\Delta^{n_i}$ with vertices
        \[
            1_0,1_1,\dotsc,1_{n_1},2_0,\dotsc,2_{n_2},\dotsc,k_0,\dotsc,k_{n_k}.
        \]
        For $i=1,\dotsc,k$. choose
        Assume that, for every sequence of integers $c$ such that $0\le c_{i1}\le c_{i2}\le\dotsb\le c_{i,{k-1}}\le n_i$, the projected fan of $\Sigma$ with 1-cones given by
        \[
            1_{c_{11}},1_{c_{12}},\dotsc,1_{c_{1,k-1}},\dotsc,  k_{c_{k1}},k_{c_{k2}},\dotsc,1_{c_{k,k-1}}
        \]
        is strongly polytopal. Then $\Sigma$ is strongly polytopal.
    \end{proposition}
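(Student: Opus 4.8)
The plan is to run the same argument as for $K=\star_{i=1}^3\partial\Delta^{n_i}$ above, with hyperplanes of $\R^{k-1}$ playing the role of the dividing lines of $\R^2$. First I would choose a point $x_{i_j}$ in each $1$-cone of $\Sigma$ (for $i=1,\dots,k$ and $j=0,\dots,n_i$) and pass to a Shephard diagram $\widehat X=(\widehat i_j)$; since $\Pic(K)=m-n=k$, this lives in a hyperplane which we identify with $\R^{k-1}$. A subset of $V(K)$ is a coface of $\Sigma$ precisely when it meets every cluster $\{i_0,\dots,i_{n_i}\}$, so the minimal cofaces are exactly the sets $\{1_{j_1},\dots,k_{j_k}\}$. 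By Corollary~\ref{cor:galeforpolytopes} each $\sigma_{j_1\cdots j_k}:=\conv(\widehat 1_{j_1},\dots,\widehat k_{j_k})$ is a genuine $(k-1)$-simplex; since every coface contains a minimal one and the interior of a full-dimensional simplex is contained in the interior of any polytope containing it, Shephard's criterion reads: $\Sigma$ is strongly polytopal if and only if $\bigcap_{(j_1,\dots,j_k)}\operatorname{int}\sigma_{j_1\cdots j_k}\neq\varnothing$.

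Next I would translate the hypothesis into this language. Iterating Proposition~\ref{prop:Shephardandproj}, for any subsets $S_i\subseteq\{0,\dots,n_i\}$ with $1\le|S_i|\le k-1$ the subsequence $(\widehat i_j)_{j\in S_i,\ i=1,\dots,k}$ is a Shephard diagram for a projected fan of the type in the statement, so that fan's strong polytopality gives $\bigcap_{j_i\in S_i\ \forall i}\operatorname{int}\sigma_{j_1\cdots j_k}\neq\varnothing$. Specializing to $|S_l|=2$ with all other $S_i$ singletons shows, exactly as in the $k=3$ case, that for each $l$ the cluster $C_l:=\conv\{\widehat l_0,\dots,\widehat l_{n_l}\}$ lies on one closed side of the hyperplane $\operatorname{aff}(\widehat i_{j_i}\mid i\neq l)$ spanned by any choice of one vertex per cluster other than $l$.

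The crux, generalizing the choice of the lines $\overline{\widehat 1_{d_1}\widehat 2_{d_2}}$ and so on, is then to produce, for each $l$, indices $d^{(l)}_i$ ($i\neq l$) such that $H_l:=\operatorname{aff}(\widehat i_{d^{(l)}_i}\mid i\neq l)$ separates $C_l$ strictly from $\conv\bigl(\bigcup_{i\neq l}C_i\bigr)$, with the latter allowed to touch $H_l$. Granting this, set $S_i:=\{d^{(l)}_i\mid l\neq i\}$, so $|S_i|\le k-1$, and let $Z:=\bigcap_{j_i\in S_i\ \forall i}\operatorname{int}\sigma_{j_1\cdots j_k}$, which is nonempty and convex by the previous paragraph. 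To conclude I would show $Z\subseteq\operatorname{int}\sigma_{k_1\cdots k_k}$ for every $(k_1,\dots,k_k)$, which by the first paragraph proves $\Sigma$ strongly polytopal: $Z$ is connected, and taking the index tuple with $j_i=d^{(l)}_i$ for $i\neq l$ shows $Z$ lies strictly on the $C_l$-side of $H_l$, hence $Z$ is disjoint from $\conv\bigl(\bigcup_{i\neq l}C_i\bigr)$ and therefore from the $l$-th facet of $\sigma_{k_1\cdots k_k}$ (a bounded face lying in that convex hull); so $Z$ misses $\partial\sigma_{k_1\cdots k_k}$, and the $k$ separations pin the connected set $Z$ inside the simplex.

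I expect the principal difficulty to be the existence of the vertex-spanned separating hyperplanes $H_l$ in $\R^{k-1}$, which carries the genuine convex-geometric content of the $k=3$ argument, together with checking the ``$Z$ is pinned inside'' step cleanly in dimension $k-1$; the remaining steps are bookkeeping strictly parallel to the displayed argument for $\star_{i=1}^3\partial\Delta^{n_i}$.
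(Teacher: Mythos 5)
Your proposal follows essentially the same route as the paper: the paper proves the $k=3$ case in detail (Shephard diagram, cluster-separating lines spanned by diagram points, the eight-fold intersection $Z$, and the pinning of $Z$ inside every triangle) and then states that the general proposition follows ``by replacing dividing lines for dividing hyperplanes,'' which is precisely your plan. The two points you flag as the real content --- the existence of the vertex-spanned separating hyperplanes $H_l$ and the argument that the connected set $Z$ is pinned inside each simplex --- are exactly the steps the paper asserts for $k=3$ without further justification, so your write-up is, if anything, more explicit than the paper's own treatment.
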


%    Since we have already seen that such a sphere is a boundary complex of a cross polytope,  we first consider its projected fans with respect to the boundary of $n$-cross polytope. Let $\Sigma$ be a fan over $K = \partial (I^n)^*$. The sphere $K$ has $2n$ vertices and we label each vertex by $v_{j}^i$, $i=0,1$, $j=1,\dotsc,n$, such that there is no edge connecting $v_{j}^0$ and $v_{j}^1$ for any $j$. Let $\widehat{X} = (\widehat{v}_1^0,\widehat{v}_1^1, \widehat{v}_2^0,\widehat{v}_2^1 \dotsc,\widehat{v}_n^0,\widehat{v}_n^1)$ be a Shephard diagram for $\Sigma$.

    Next, we consider smooth toric varieties over $P(J)$ when $P = P_5$ and $J=(a_1,\dotsc,a_5)$. Note the facets of $P_5(J)$ are $1_1,\dots,1_{a_1},2_1,\dotsc,2_{a_2}, \dotsc, 5_1,\dotsc,5_{a_5}$. Let $M = M(P_5(J),\Lambda)$ where $\Lambda$ is the matrix seen from Theorem~\ref{thm:toricoverpentagon}. Let $\Sigma$ be the fan given by $\Lambda$ and $\widehat{X}$ be a Shephard diagram for $\Sigma$. By a property of Shephard diagrams of canonical extensions, we know that $\widehat{2}_1=\widehat{2}_2=\dotsb=\widehat{2}_{a_2}$, $\widehat{3}_1=\widehat{3}_2=\dotsb=\widehat{3}_{a_3}$, and $\widehat{5}_1=\widehat{5}_2=\dotsb=\widehat{5}_{a_5}$, thus it will be natural that we denote them by just $\widehat{2}$, $\widehat{3}$, and $\widehat{5}$ respectively. Choose a sequence of integers $\mathbf{i}=(i_1, i_4)$ so that $1\le i_1\le a_1$ and $1\le i_4 \le a_4$. Then every subsequence $\widehat{X}_{\mathbf{i}} := (\widehat{1}_{i_1},\widehat{2},\widehat{3},\widehat{4}_{i_4},\widehat{5})$ is a Shephard diagram for a fan $\Sigma'$ over $P$ which is always strongly polytopal. We know the fan $\Sigma'$ is of type $(5,d)$ for some integer $d$. Recall that there is the corresponding characteristic map of $\Sigma'$ (up to basis change of $\Z^2$)
    \[
        \lambda_d=\begin{pmatrix}
            1   &   0   &   -1  &   -1  &   d\\
            0   &   1   &   1   &   0   &   -1
        \end{pmatrix}
    \]
    and we compute a Shephard diagram for $\lambda_d$. Suppose that $d\ge 0$. To make the sum of column vectors zero, we multiply a suitable positive real number to each column of $\lambda_d$, resulting
    \[
        X=\begin{pmatrix}
            2   &   0   &   -1  &   -2d-1  &   2d\\
            0   &   1   &   1   &   0   &   -2
        \end{pmatrix}
    \]
    and we choose a linear transform $\overline{X}$ which contains a row $(1,1,1,1,1)$, for example
    \[
        \overline{X}=\begin{pmatrix}
            1 & -2 & 2 & 0 & 0 \\
            -d & 2 & 0 & 0 & 1 \\
            1 & 1 & 1 & 1 & 1
        \end{pmatrix}.
    \]
    Deleting the row $(1,1,1,1,1)$ gives the wanted Shephard diagram. Note that for all $d \ge 0$, the fifth column vector $(0,1)$ is the midpoint of the second column $(-2,2)$ and the third one $(2,0)$. This is also true for any other Shephard diagrams and in particular one obtains $\widehat{5} = (\widehat{2}+\widehat{3})/2$. A similar argument works for $d<0$.

    We use again the notation $\relint \conv \{v_1,\dotsc,v_m\} = v_1\cdots v_m$. We know that for every $\mathbf{i}$,
    \[
        S(\widehat{X}_{\mathbf{i}}) := \widehat{1}_{i_1}\widehat{2}\widehat{3} \cap \widehat{2}\widehat{3} \widehat{4}_{i_4} \cap \widehat{3} \widehat{4}_{i_4}\widehat{5} \cap \widehat{4}_{i_4}\widehat{5} \widehat{1}_{i_1} \cap \widehat{5} \widehat{1}_{i_1}\widehat{2} \ne \varnothing
    \]
    by strong polytopalness of $\Sigma'$. Since $\widehat{1}_{i_1}\widehat{2}\widehat{3}$ intersects $\widehat{2}\widehat{3} \widehat{4}_{i_4}$ for every $\mathbf{i}$, every $\widehat{1}_{i_1}$ and $\widehat{4}_{i_4}$ must lie on the same open half-plane determined by the line $\overline{\widehat{2}\widehat{3}}$. The fact that $\widehat{3} \widehat{4}_{i_4}\widehat{5} \cap \widehat{5} \widehat{1}_{i_1}\widehat{2}$ is nonempty implies that $\angle \widehat{1}_{i_1}\widehat{5}\widehat{3} + \angle \widehat{4}_{i_4}\widehat{5}\widehat{2} < 2\pi$. See Figure~\ref{fig:Shephardofpentagon}.

%    \begin{figure}[h]
%        \psset{unit=40pt}
%        \begin{pspicture}(-3,0)(3,4)
%        \pspolygon[linestyle=none,fillstyle=solid,fillcolor=red]
%                     (0.2380952379, 1.656084656)
%                     (0.5249297754, 1.509480337)(0,.5)(-.06844380404, 1.526657061)
%        \psline[linewidth=1.5pt](0,.5)(1.3,3)(-2.5,.5)(2.5,.5)(-.2,3.5)(0,.5)
%        \psline[linewidth=0.5pt, linestyle=dashed](2.5,.5)(-2,2.8)(0,.5)
%%        \psline[linewidth=1pt](2.5,.5)(-.6,1.8)(0,.5)
%        \psline[linewidth=0.5pt, linestyle=dashed](-2.5,.5)(2,2.4)(0,.5)
%        \rput[l](2.5,.5){$\widehat{2}$}
%        \rput[r](-.2,3.5){$\widehat{1}_{i_1}$}
%        \rput[r](-2,2.8){$\widehat{1}_{j}$}
%        \rput[u](0,.35){$\widehat{5}$}
%        \rput[l](1.35,3){$\widehat{4}_{i_4}$}
%        \rput[l](2,2.4){$\widehat{4}_{\ell}$}
%        \rput[r](-2.5,.5){$\widehat{3}$}
%        \end{pspicture}
%        \caption{A Shephard diagram for a complete non-singular fan $\Sigma$ over $P_5( 2,1,1,2,1) $. The thick lines indicate a Shephard diagram $\widehat{X}_{\mathbf{i}}$ which has maximal angles of $\angle \widehat{1}_{i_1}\widehat{5}\widehat{3}$ and $\angle \widehat{4}_{i_4}\widehat{5}\widehat{2}$ respectively.}
%        \label{fig:Shephardofpentagon}
%    \end{figure}

    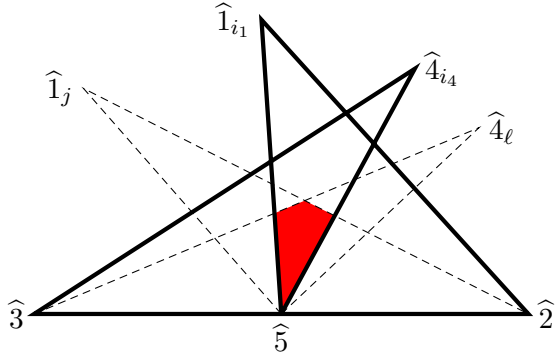
\begin{figure}[h]
        \begin{tikzpicture}[scale=1.3]
            \coordinate [label=left: $\widehat{1}_{i_1}$   ] (1)  at (-0.2,3.5);
            \coordinate [label=left: $\widehat{1}_{j}$     ] (11) at (-2,2.8);
            \coordinate [label=right:$\widehat{2}$         ] (2)  at (2.5,.5);
            \coordinate [label=left: $\widehat{3}$         ] (3)  at (-2.5,.5);
            \coordinate [label=right:$\widehat{4}_{i_4}$   ] (4)  at (1.35,3);
            \coordinate [label=right:$\widehat{4}_{\ell}$  ] (44) at (2,2.4);
            \coordinate [label=below:$\widehat{5}$         ] (5)  at (0,.5);
            \begin{scope}[fill=red]
                \clip (3)--(44)--(5)--cycle;
                \clip (5)--(11)--(2)--cycle;
                \fill (1)--(5)--(4)--cycle;
            \end{scope}
            \draw [ultra thick] (5)--(1)--(2)--(3)--(4)--(5);
            \draw [thin, densely dashed] (5)--(11)--(2)
                (5)--(44)--(3);
        \end{tikzpicture}
        \caption{A Shephard diagram for a complete non-singular fan $\Sigma$ over $P_5( 2,1,1,2,1) $. The thick lines indicate a Shephard diagram $\widehat{X}_{\mathbf{i}}$ which has maximal angles of $\angle \widehat{1}_{i_1}\widehat{5}\widehat{3}$ and $\angle \widehat{4}_{i_4}\widehat{5}\widehat{2}$ respectively.}
        \label{fig:Shephardofpentagon}
    \end{figure}
    Pick $i_1$ and $i_4$ such that the angles $\angle \widehat{1}_{i_1}\widehat{5}\widehat{3}$ and $\angle \widehat{4}_{i_4}\widehat{5}\widehat{2}$ are maximal, respectively. Then for such $i_1$ and $i_4$, it is an easy task to show that the intersection of $\relint\conv\{\widehat{1}_{i_1}, \widehat{5}, \widehat{4}_{i_4}\}$ and an $\varepsilon$-ball centered at $\widehat{5}$ is included in $\relint\conv {\widehat{X}|_Y}$ for any coface $Y$ of $P_5(J)$, hence $\Sigma$ is a strongly polytopal fan. For example,  the set $S(\Sigma,\widehat{X})$ is the region colored by red in Figure~\ref{fig:Shephardofpentagon}.

\section{Application: Real toric varieties and their topological analogues}\label{sec:smallcover}
    In this section, we briefly give an introduction to ``real toric objects'' and classify them for special cases. Let $M$ be a toric variety of complex dimension $n$. Then there is a canonical involution, called the \emph{conjugation} of $M$. The set of its fixed points, denoted by $M_{\R}$, is a real subvariety of dimension $n$, called a \emph{real toric variety}. When $M$ is a toric manifold, then $M_{\R}$ is a submanifold of dimension $n$ and called a \emph{real toric manifold}. This concept can be generalized to topological toric case.

    \begin{definition}(\cite{IFM12})
        We say that a closed smooth manifold $M$ of dimension $n$ with an effective smooth action of $(\R^*)^n$ having an open dense orbit is a \emph{real topological toric manifold} if it is covered by finitely many invariant open subsets each of which is equivariantly deffeomorphic to a direct sum of real one-dimensional smooth representation spaces of $(\R^*)^n$.
    \end{definition}
    See \cite{IFM12} for details. Note that $(\R^*)^n\cong \R^n\times\Z_2^n$ as a group. Similarly to its complex counterpart, we can consider the combinatorial object for a real topological toric manifold (as a $\Z_2^n$-manifold). One can define an analogue of a characteristic map, called a \emph{ characteristic map over $\Z_2$}, $(K,\lambda)$: all is the same but $\lambda$ maps to $\Z_2^n$. The non-singularity condition becomes that for each face of $K$, the vectors $\lambda(i)$ are linearly independent over $\Z_2$.

    If $(K,\lambda)$ is polytopal, then the corresponding manifold $M(\lambda)$ is called a \emph{small cover}, which is a $\Z_2$-version of a quasitoric manifold.

    One can observe that a slightly modified version of Proposition~\ref{prop:chrwedge} works for real topological toric manifolds except positiveness of orientation which is not applicable for a $\Z_2$-version. Hence, we have the following theorem which is a $\Z_2$-version of our main theorem restating Theorem~\ref{thm:mainthm2};
    \begin{theorem}
        Let $K$ be a fan-like simplicial sphere and $v$ a given vertex of $K$. Let $(\wed_v(K), \lambda)$ be a characteristic map over $\Z_2$ and let $v_1$ and $v_2$ be the two new vertices of $\wed_v(K)$ created from the wedging. Then $\lambda$ is uniquely determined by the projections $\proj_{v_1}\lambda$ and $\proj_{v_2}\lambda$. Furthermore, $\lambda$ is non-singular if and only if  so are $\proj_{v_1}\lambda$ and $\proj_{v_2}\lambda$.
    \end{theorem}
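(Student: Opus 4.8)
The plan is to recycle the argument for Proposition~\ref{prop:chrwedge} almost verbatim over the ground ring $\Z_2$, simply discarding the clauses about positivity of the orientation and about fan-givingness (which have no $\Z_2$-analogue), and observing that the hypothesis ``$\{\lambda(v_1),\lambda(v_2)\}$ is unimodular'' is now automatic. Indeed $\Z_2^n$ is a vector space over the field $\Z_2$, so any linearly independent subset extends to a basis; in particular $\{\lambda(v_1),\lambda(v_2)\}$ is part of a $\Z_2$-basis, and every change of basis of $\Z_2^n$ preserves D-J equivalence exactly as in the integral case. Hence, taking $v=1$ and applying such a change of basis, I would normalize so that $\lambda(v_1)=e_1$ and $\lambda(v_2)=e_2$, giving the characteristic matrix the block shape of \eqref{eqn:matrixwedge}, now with entries in $\Z_2$.

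With this normalization, quotienting $\Z_2^n$ by $\langle e_1\rangle$ reads off $\proj_{v_1}\lambda$ as the matrix obtained by deleting the first row and the $v_1$-column, and likewise $\proj_{v_2}\lambda$ comes from deleting the second row and the $v_2$-column; these are precisely the matrices $N_2$ and $N_1$ appearing in the proof of Proposition~\ref{prop:chrwedge}. The two projected maps, restricted to the common subcomplex $\link_{\wed_v(K)}\{v_1,v_2\}\cong\link_K\{v\}$, are represented by the same block ``$A$'', while the remaining rows of $N_1$ and $N_2$ are exactly the first and second rows of the augmented part of $\lambda$. Therefore, once $\proj_{v_1}\lambda$ and $\proj_{v_2}\lambda$ are prescribed every entry of $\lambda$ is forced; conversely, any pair of $\Z_2$-characteristic maps on $\link_K\{v\}\cong K$ that agree on that common block assembles back into a well-defined $\lambda$ on $\wed_v(K)$. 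This yields the asserted uniqueness.

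For the non-singularity equivalence I would argue as in the integral case. The ``only if'' direction is the $\Z_2$-analogue of Lemma~\ref{lem:projection}: the link of a proper face of a fan-like sphere is again a fan-like sphere, and linear independence over $\Z_2$ descends to any quotient lattice, so each projection of a non-singular $(K,\lambda)$ is non-singular. For ``if'', recall from $\wed_v(K)=(I\star\link_K\{v\})\cup(\partial I\star(K\setminus\{v\}))$ that every maximal face $I$ of $\wed_v(K)$ contains $v_1$ or $v_2$; say $v_1\in I$. Then $\{\lambda(i)\mid i\in I\}$ is a $\Z_2$-basis of $\Z_2^n$ if and only if the images $\{[\lambda(i)]\mid i\in I\setminus\{v_1\}\}$ form a basis of $\Z_2^n/\langle\lambda(v_1)\rangle\cong\Z_2^{n-1}$, i.e. if and only if $\{(\proj_{v_1}\lambda)(i)\mid i\in I\setminus\{v_1\}\}$ is a basis; and $I\setminus\{v_1\}$ is a maximal face of $\link_{\wed_v(K)}\{v_1\}\cong K$, so this follows from non-singularity of $\proj_{v_1}\lambda$. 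Hence $\lambda$ is non-singular, completing the equivalence.

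Since the integral proof is essentially mechanical here, I do not expect a serious obstacle; the only point demanding a little care is the ``gluing'' implicit in the uniqueness claim — verifying that the two prescribed projected characteristic maps are forced to agree on the shared sub-block indexed by $\link_K\{v\}$ (automatic, since both are restrictions of the single map $\lambda$) and that this shared data, together with the two extra rows, reconstructs a genuine non-singular characteristic map rather than merely a matrix over $\Z_2$.
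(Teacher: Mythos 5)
Your proof is correct and follows essentially the same route as the paper, which disposes of this theorem in a single sentence by observing that the argument of Proposition~\ref{prop:chrwedge} carries over to $\Z_2$ verbatim once the orientation clause is dropped. Your added observation that the unimodularity hypothesis on $\{\lambda(v_1),\lambda(v_2)\}$ becomes automatic over the field $\Z_2$ is precisely why that assumption can be omitted from the $\Z_2$-statement.
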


    Using the theorem, we can classify every real topological toric manifold over $K$ with $\Pic(K) = 3$. The process is quite similar to classification of toric manifolds of Picard number three introduced in the previous section. First, remember that $K$ is polytopal by Mani \cite{Man72} and $K = \partial P^*$ for a simple polytope $P$. Let $X'$ be a standard Gale diagram for $K$ which is a regular $(2k-1)$-gon. Recall Remark~\ref{rem:existenceandclassificationoftoricandtopoltoric} which states there is a topological toric manifold over $P$ if and only if $k=2,3,4$. Similarly, again by \cite{Ero08}, there is a real topological toric manifold (or small cover) over $P$ if and only if $k=2,3,4$.
    Suppose that $k=2$. Then the Gale diagram is a triangle and $P=[n_1+1,n_2+1,n_3+1]$ for positive integers $n_i$. Let $P=[2,2,2]$ be a 3-cube. Every small cover on $P$ is a 3-stage real Bott manifold by \cite{CMS10} and hence for some appropriate ordering of facets of $P$, its characteristic matrix
    \[
    \lambda = \left(
        \begin{array}{ccc|ccc}
        1 & 0 & 0 & 1 & 0 & 0 \\
        0 & 1 & 0 & * & 1 & 0 \\
        0 & 0 & 1 & * & * & 1
        \end{array}
        \right),
    \]
    where the asterisks mean arbitrary numbers in $\Z_2$. Applying the $\Z_2$-version of Proposition~\ref{prop:chrwedge} repeatedly, we obtain the characteristic matrix of a small cover on $[n_1+1,n_2+1,n_3+1]=\Delta^{n_1} \times \Delta^{n_2} \times \Delta^{n_3}$:
    \[
    \left(
        \begin{array}{ccc|ccc|ccc|ccc}
          & & & & & & & & & 1 & 0 & 0 \\
          & I_{n_1} & & & 0 & & & 0 & & \vdots & \vdots & \vdots       \\
          & & & & & & & & & 1 & 0 & 0 \\ \hline
          & & & & & & & & & * & 1 & 0 \\
          &0 & & &I_{n_2} & & & 0& & \vdots & \vdots & \vdots \\
          & & & & & & & & & * & 1 & 0 \\ \hline
          & & & & & & & & & * & * & 1 \\
          &0 & & &0 & & & I_{n_3} & & \vdots & \vdots & \vdots \\
          & & & & & & & & & * & * & 1
        \end{array}
    \right)
    \]
    which is exactly that of a generalized Bott manifold over $\R$ of stage 3 as seen from \cite{CMS10}. In particular, $M$ is a real toric variety. We remark that the number of D-J classes over $\Delta^{n_1} \times \Delta^{n_2} \times \Delta^{n_3}$ is calculated in \cite{Choi08} by
    \begin{multline*}
        \# DJ = 1 + 2(x_1 + x_2 + x_3) + (x_1+x_2+x_3)^2+ (x_1 x_2 + x_2 x_3 + x_3 x_1)\\
         +  (x_1 + x_2 + x_3)(x_1^2 + x_2 ^2 + x_3 ^2) - x_1^3- x_2^3- x_3^3,
    \end{multline*}
    where $x_i = 2^{n_i}-1$ for $i=1,2,3$.

    If $k=3$, the polygon is a pentagon $P_5$. Let us find small covers over $P_5$. To do this, denote $\lambda(i) = v_i$. By applying a basis change, we can assume $v_1=\binom 10$ and $v_2=\binom 01$.  Then it is a simple computation to see that there are five D-J classes on the pentagon:
    \[
        A_1 := \begin{pmatrix}
        1 & 0 & 1 & 0 & 1 \\
        0 & 1 & 1 & 1 & 1
        \end{pmatrix},
    \]
    \[
        A_2 := \begin{pmatrix}
        1 & 0 & 1 & 1 & 1 \\
        0 & 1 & 1 & 0 & 1
        \end{pmatrix},
    \]
    \[
        A_3 := \begin{pmatrix}
        1 & 0 & 1 & 1 & 0 \\
        0 & 1 & 1 & 0 & 1
        \end{pmatrix},
    \]
    \[
        A_4 := \begin{pmatrix}
        1 & 0 & 1 & 1 & 0 \\
        0 & 1 & 0 & 1 & 1
        \end{pmatrix},
    \]
    and
    \[
        A_5 := \begin{pmatrix}
        1 & 0 & 1 & 0 & 1 \\
        0 & 1 & 0 & 1 & 1
        \end{pmatrix}.
    \]
    Note that in any case, up to a suitable rotation of $P_5$, the matrix has the form $(a\;b\; a\; b \; c)$ for some nonzero vectors $a,b,c\in\Z_2^2$ which are distinct each other. Up to basis change, the characteristic map is determined by the position of $c$, hence the name $A_i$. Let us call the position of $c$ the \emph{type} of $A_i$. Hence the type of $A_i$ is $i$.
    The five matrices are all equivalent each other up to rotational symmetry of  $P_{5}$  and the integral matrix $\lambda_d$ becomes $A_3$ mod 2 if $d$ is even or $A_2$ otherwise. Hence we obtain a $\Z_2$-analogue of Lemma~\ref{lem:toricoverpentagon}. Since every small cover over  $P_{5}$  is a real toric variety, it concludes that every small cover over $P_{5}( a_1,a_2,\dotsc, a_5)$ is a real toric variety and all argument used to prove Theorem~\ref{thm:toricoverpentagon} works almost the same. Let $A=A_3$ or $A_2$. Again, we write
    \[
        M_i := \begin{pmatrix}
              0 & \cdots & 0 & v_i\\
        \end{pmatrix}_{2\times a_i}
    \]
    where $i=1,\dotsc,5$ and $v_i$ is the $i$-th column of $A$. Moreover, we write
    \[
        S_{a_i}:=\begin{pmatrix}
              1 &    &   0 &  1  \\
                & \ddots  &    & \vdots   \\
               0   &   & 1 & 1
        \end{pmatrix}_{(a_i-1)\times a_i}
    \]
    and
    \[
        N_i := \begin{pmatrix}
             0 & \cdots & 0 & n_i \\
        \end{pmatrix}_{(a_i-1)\times a_5}
    \]
    for an arbitrary vector $n_i=(n_{i2},\dotsc,n_{ia_i})^T$. We put $n_{i1}=0$ for convention.  Any pentagon in the 2-skeleton of $P(J)=P_5(a_1,\dotsc,a_5)$ can be labeled by an integral vector $(b_1,\dotsc,b_5)$, $1\le b_i\le a_i$, and each pentagon can be naturally identified with $P_5$. Now we get the following $\Z_2$-analogue of Theorem~\ref{thm:toricoverpentagon}.

    \begin{theorem} \label{thm:realtoricoverpentagon}
        Up to rotational symmetry of the pentagon $P_5$ and basis change, a small cover over $P_5(a_1,\dotsc,a_5)$ is determined by the following characteristic matrix
        \[
            \Lambda=\begin{pmatrix}
                M_1 & M_2 & M_3 & M_4 & M_5 \\
                S_{a_1} & 0 & 0 & 0 & N_1 \\
                0 & S_{a_2} & 0 & 0 & 0 \\
                0 & 0 & S_{a_3} & 0 & 0 \\
                0 & 0 & 0 & S_{a_4} & N_4 \\
                0 & 0 & 0 & 0 & S_{a_5}
            \end{pmatrix}_{(\sum_i{a_i}-3)\times\sum_i{a_i}},
        \]
        for arbitrary choice of $n_1$ and $n_4$. The projection of $\Lambda$ over the pentagon labeled by $(b_1,\dotsc,b_5)$ has type
        \[
        \left\{
          \begin{array}{ll}
            3, & \hbox{if $A=A_3$ and $n_{1b_1}+n_{4b_4}=0$; or $A=A_2$ and $n_{1b_1}+n_{4b_4}=1$;} \\
            2, & \hbox{if $A=A_3$ and $n_{1b_1}+n_{4b_4}=1$; or $A=A_2$ and $n_{1b_1}+n_{4b_4}=0$.}
          \end{array}
        \right.
        \]
        Therefore, the number of Davis-Januszkiewicz classes of small covers over $P_5(a_1,\dotsc,a_5)$ is
        \[
             \hbox{\#DJ}= 2^{a_1+a_4-1}+2^{a_2+a_5-1}+2^{a_3+a_1-1}+2^{a_4+a_2-1}+2^{a_5+a_3-1}-5.
         \]
    \end{theorem}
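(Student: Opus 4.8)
The plan is to run the proof of Theorem~\ref{thm:toricoverpentagon} almost verbatim over $\Z_2$, the only structural change being that the notion of \emph{fan-giving}, and hence the Diophantine conditions extracted from it, simply disappears: over $\Z_2$ every characteristic map on $\partial P_5^\ast$ is automatically one of $A_1,\dots,A_5$ and is non-singular, so the constraints that in the integral case pinned down $d$ and forced the auxiliary unknowns to vanish evaporate. This is exactly why the parameters $n_1,n_4$ survive as \emph{free} vectors over $\Z_2$ and why the signs in the integral type formula $(5,d+n_{1b_1}-n_{4b_4})$ collapse to a parity.

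First I would invoke Theorem~\ref{thm:mainthm2} together with Corollary~\ref{cor:projectionofKJ} (which, as noted there, holds verbatim over $\Z_2$): a small cover over $P_5(J)=P_5(a_1,\dots,a_5)$ is non-singular iff all of its projections onto the subcomplexes $K_s\cong\partial P_5^\ast$ are, and it is reconstructed uniquely from those projections. Each such projection is D-J equivalent to one of $A_1,\dots,A_5$, and since $\lambda_d\bmod 2$ equals $A_3$ ($d$ even) or $A_2$ ($d$ odd), the wedging bookkeeping of Theorem~\ref{thm:toricoverpentagon}—choose a $\Z_2$-basis adapted to the wedged facet, peel off the one entry of each new row forced by the vertex combinatorics, and, up to a rotation of $P_5$ and a $\Z_2$-basis change, absorb all remaining nonzero unknowns into the last facet block—carries over and produces precisely the matrix $\Lambda$ of the statement, with $A\in\{A_2,A_3\}$ and arbitrary $n_1\in\Z_2^{a_1-1}$, $n_4\in\Z_2^{a_4-1}$. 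The type of $\proj_{(b_1,\dots,b_5)}\Lambda$ is then the reduction mod $2$ of the integral computation: the value depends only on $A$ and on the parity $n_{1b_1}+n_{4b_4}$, which gives exactly the stated case distinction between types $2$ and $3$.

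For the enumeration I would record a small cover $M$ over $P_5(J)$ by its \emph{type function} $\tau\colon\prod_i\{1,\dots,a_i\}\to\{A_1,\dots,A_5\}$ sending a pentagon labelled $(b_1,\dots,b_5)$ to the D-J class of the restriction of $M$ to it; by Corollary~\ref{cor:projectionofKJ} the D-J class of $M$ is determined by $\tau$. Running the normal form above through the five rotations of $P_5$, the functions $\tau$ that occur are exactly those of the shape ``$\tau(b)=A_{c-2}$ if $u(b_{c-1})+w(b_{c+1})=0$ and $A_{c+2}$ otherwise'' for a choice of special facet $c\in\Z_5$ and maps $u\colon\{1,\dots,a_{c-1}\}\to\Z_2$, $w\colon\{1,\dots,a_{c+1}\}\to\Z_2$ (with $u,w$ combining the choice $A\in\{A_2,A_3\}$ with $n_1,n_4$), where $(c,u,w)$ and $(c,u+\mathbf{1},w+\mathbf{1})$ give the same $\tau$. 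For a fixed $c$ this yields $2^{a_{c-1}}\cdot 2^{a_{c+1}}/2=2^{a_{c-1}+a_{c+1}-1}$ distinct functions, and $\{c-1,c+1\}$ runs over $\{4,1\},\{5,2\},\{1,3\},\{2,4\},\{3,5\}$ as $c$ runs over $\Z_5$, reproducing the five terms $2^{a_1+a_4-1},\dots,2^{a_5+a_3-1}$. Finally I would pin down the overcount: a non-constant $\tau$ determines the pair $\{A_{c-2},A_{c+2}\}$, hence $c$, uniquely, whereas each of the five constant functions $\tau\equiv A_j$ occurs in precisely the two families $c=j+2$ and $c=j+3$; subtracting $1$ for each of these five gives the $-5$, and the formula follows.

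I expect the main obstacle to be the structure step: carrying out the integral wedging argument over $\Z_2$ when the fan-giving conditions no longer clear the off-block unknowns, so that one must instead normalize to the displayed $\Lambda$ using only rotations of $P_5$, $\Z_2$-basis changes, and the coincidences $A_3=\lambda_0$, $A_2=\lambda_1\ (\mathrm{mod}\ 2)$. The remaining delicate point is checking that D-J equivalence of small covers over $P_5(J)$ is faithfully tracked by $\tau$—so that counting normal forms really counts D-J classes—for which one leans on the rigidity of $A_1,\dots,A_5$ over the pentagon and the uniqueness clause of Corollary~\ref{cor:projectionofKJ}.
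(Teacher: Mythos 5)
Your proposal follows the same route as the paper: the paper's proof consists of the single sentence ``every argument goes the same as that of Theorem~\ref{thm:toricoverpentagon}'' followed by exactly your count ($2\cdot 2^{a_1-1}\cdot 2^{a_4-1}$ per rotation, minus $5$ for the coincidences at $n_1=n_4=0$), and your type-function bookkeeping --- a non-constant $\tau$ takes values in an adjacent pair $\{A_{c-2},A_{c+2}\}$ and so determines the distinguished facet $c$, while each constant $\tau\equiv A_j$ arises from exactly the two families $c=j+2,\,j+3$ --- is in fact a more explicit justification of the $-5$ than the paper's single worked example.

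One claim in your opening paragraph is wrong as stated and would derail the structural step if taken literally: the constraints on the auxiliary unknowns do \emph{not} evaporate over $\Z_2$. What disappears is the fan-giving condition, but non-singularity of the two projections $\proj_{v_1}\lambda$, $\proj_{v_2}\lambda$ (which by Theorem~\ref{thm:mainthm2} is equivalent to non-singularity of $\lambda$) still imposes precisely the mod-$2$ reductions of the integral conditions. For instance, wedging at facet $3$ gives $\proj_{3_2}\lambda=\left(\begin{smallmatrix}1&n_2&1&1+n_4&d\\0&1+n_2&1&n_4&1\end{smallmatrix}\right)$ over $\Z_2$, whose non-singularity on the adjacent pairs $\{1,2\}$ and $\{4,5\}$ forces $n_2=0$ and $n_4(1+d)=0$ --- exactly the reductions of the integral relations $n_2=0$, $n_4(d-1)=0$. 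If the constraints really vanished, every new row would carry two free unknowns and the normal form would be strictly larger than $\Lambda$. So the reason $\Lambda$ has the same block shape as in Theorem~\ref{thm:toricoverpentagon}, with only $N_1$ and $N_4$ surviving in the facet-$5$ block after a rotation, is that the $\Z_2$ non-singularity analysis reproduces the integral one mod $2$; with that correction the rest of your argument, including the identification $(c,u,w)\sim(c,u+\mathbf{1},w+\mathbf{1})$ and the final inclusion--exclusion, goes through and agrees with the paper.
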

    \begin{proof}
        Every argument goes the same as that of Theorem~\ref{thm:toricoverpentagon}. The D-J equivalence type of $\Lambda$ is determined by the types of projected characteristic maps. We can choose $A$ from $A_3$ and $A_2$ and there are $2^{a_1-1}$ and $2^{a_4-1}$ choices of the vectors $n_1$ and $n_4$ respectively, thus we have $2\cdot 2^{a_1-1}\cdot 2^{a_4-1} = 2^{a_1+a_4-1}$ possibilities. Considering rotational symmetry of $P_5$, the D-J classes are all distinct except possibly the case $n_1$ and $n_4$ are zero vectors. For example, both of
        \[
            \Lambda=\begin{pmatrix}
                M_1 & M_2 & M_3 & M_4 & M_5 \\
                S_{a_1} & 0 & 0 & 0 & 0 \\
                0 & S_{a_2} & 0 & 0 & 0 \\
                0 & 0 & S_{a_3} & 0 & 0 \\
                0 & 0 & 0 & S_{a_4} & 0 \\
                0 & 0 & 0 & 0 & S_{a_5}
            \end{pmatrix} ,\text{ when } A=A_2
        \]
        and
        \[
            \Lambda'=\begin{pmatrix}
                M_5 & M_1 & M_2 & M_3 & M_4 \\
                0 & S_{a_2} & 0 & 0 & 0  \\
                0 & 0 & S_{a_3} & 0 & 0  \\
                0 & 0 & 0 & S_{a_4} & 0   \\
                0 & 0 & 0 & 0 & S_{a_5}  \\
                S_{a_1} & 0 & 0 & 0 & 0
            \end{pmatrix},\text{ when } A=A_3
        \]
        give the same types (it is 2 in this case) on every projection. Since there are five such cases, we obtain the wanted result.
    \end{proof}
%    \red{\begin{corollary}
%      Every real topological toric manifold over $P(J)$ is a real toric variety.
%    \end{corollary}
%    \begin{proof}
%        Since $P(J)$ is polytopal, a real topological toric manifold over $P(J)$ is a small cover. By Theorem~\ref{thm:toricoverpentagon} and Theorem~\ref{thm:realtoricoverpentagon}, any characteristic map over $\Z_2$  over $P(J)$ can be obtained as mod $2$ of a characteristic map over $P(J)$. Hence, we proved the corollary.
%    \end{proof}
%    }

    When $k=4$, the standard Gale diagram is a heptagon and $ P_{[7]} = [1,1,1,1,1,1,1]$ is a simple 4-polytope with 7 facets. Let us label the vertices of the heptagon by $1,2,5,3,4,6,7$. It is actually the dual of a cyclic polytope; see \cite{Ero08} for reference. There are only two small covers on $[1,1,1,1,1,1,1]$, which we denote by $\lambda_1$ and $\lambda_2$,
    \begin{equation}\label{eqn:charmapsoverheptagon}
        \lambda_1=\begin{pmatrix}
            1&0&0&0&1&0&1\\
            0&1&0&0&0&1&1\\
            0&0&1&0&1&1&0\\
            0&0&0&1&1&1&1
        \end{pmatrix}\text{  and  }
        \lambda_2=\begin{pmatrix}
            1&0&0&0&1&1&1\\
            0&1&0&0&1&0&1\\
            0&0&1&0&0&1&1\\
            0&0&0&1&1&1&0
        \end{pmatrix},
    \end{equation}
    up to D-J equivalence.
    For example, the following is a characteristic map for $[2,1,1,1,1,1,1]=\wed_{F_1}([1,1,1,1,1,1,1])$:
    \[
        \lambda=\begin{pmatrix}
            1_1&1_2&2&3&4&5&6&7 \\ \hline
            1&0&0&0&0&1&0&1\\
            0&1&0&0&0&1&0&1\\
            0&0&1&0&0&0&1&1\\
            0&0&0&1&0&1&1&0\\
            0&0&0&0&1&1&1&1
        \end{pmatrix}.
    \]
    Observe the first and second rows of $\lambda$ and compare it with the matrix \eqref{eqn:canonicalmatrix}.  We write
    \[
        \lambda_1=\begin{pmatrix}
            1&0&0&0& \mathbf{a_1}\\
            0&1&0&0&\mathbf{a_2}\\
            0&0&1&0&\mathbf{a_3}\\
            0&0&0&1&\mathbf{a_4}
        \end{pmatrix}
    \]
    and
    \[
        \lambda_2=\begin{pmatrix}
            1&0&0&0& \mathbf{b_1}\\
            0&1&0&0&\mathbf{b_2}\\
            0&0&1&0&\mathbf{b_3}\\
            0&0&0&1&\mathbf{b_4}
        \end{pmatrix},
    \]
    where $\mathbf{a_i}$ and $\mathbf{b_i}$ are row vectors of dimension 3.
    In order to find a nontrivial wedging over $[2,1,1,1,1,1,1]$, one must find $k$, $1\le k\le 4$, such that
    \[
        \mathbf{a_i} = \mathbf{b_i}   \text{ when } i\ne k;
    \]
    and
    \[
        \mathbf{a_i} \ne \mathbf{b_i}   \text{ when } i=k,
    \]
    but there is no such $k$. In conclusion, there are only two small covers over $[2,1,1,1,1,1,1]$ up to D-J equivalence: one is the canonical extension of $\lambda_1$ and the other is the trivial wedging of $\lambda_2$. This holds for arbitrary $P_{[7]}(J)$. Meanwhile, we recall Theorem~\ref{thm:notoricvariety} saying that there is no toric manifold over $P_{[7]}$. Hence, no small cover over $[a_1, a_2, \ldots, a_7]$ is a real toric manifold while every small cover over $[a_1, \ldots, a_5]$ is a real toric variety.

    We state this result as a proposition:
    \begin{proposition} \label{prop:realtoricvarietyoverheptagon}
        Up to Davis-Januszkiewicz equivalence, there are exactly two small covers over the simple polytope $[a_1,a_2,\dotsc,a_7]$. These cannot be real toric manifolds.
    \end{proposition}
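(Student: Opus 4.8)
The two assertions will follow from facts already in place: $P_{[7]}$ supports exactly two small covers $\lambda_1,\lambda_2$ up to D-J equivalence; $\lambda_1$ and $\lambda_2$ cannot be ``glued'' across a single simplicial wedge; the $\Z_2$-analogue of Corollary~\ref{cor:projectionofKJ}; and Theorem~\ref{thm:notoricvariety}. Write $K=\partial P_{[7]}^\ast$, so that $\partial[a_1,\dots,a_7]^\ast=K(J)$ with $J=(a_1,\dots,a_7)$. If all $a_i=1$ there is nothing to prove, so assume some entry of $J$ is at least $2$. Let $\Lambda$ be any non-singular characteristic map over $\Z_2$ on $K(J)$. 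By the $\Z_2$-analogue of Corollary~\ref{cor:projectionofKJ}, $\Lambda$ is determined by its projections $\Lambda_s:=\proj_{V(K(J))\setminus\operatorname{im} s}\Lambda$ onto the subcomplexes $K_s\cong K$, one for each section $s\colon V(K)\to V(K(J))$, and if all $\Lambda_s$ are D-J equivalent to one characteristic map $\lambda$ on $K$, then $\Lambda=\lambda(J)$. Each $\Lambda_s$ is a non-singular $\Z_2$-characteristic map on $\partial P_{[7]}^\ast$, hence D-J equivalent to $\lambda_1$ or to $\lambda_2$; the heart of the count is to show that the same option occurs for every $s$.

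For this I would use that the set of sections is the product $\prod_{i=1}^{7}\{1,\dots,a_i\}$, which is connected under changing one coordinate at a time, so it suffices to compare $\Lambda_s$ and $\Lambda_{s'}$ when $s,s'$ differ in exactly one vertex $v$ of $K$. In that situation the definition of $K(J)$ shows that $\operatorname{im} s\cup\operatorname{im} s'$ is the vertex set of a copy of $\wed_v(K)$ sitting inside $K(J)$ --- concretely $\link_{K(J)}\tau$ for the face $\tau=V(K(J))\setminus(\operatorname{im} s\cup\operatorname{im} s')$ --- and a direct check identifies $\Lambda_s$ and $\Lambda_{s'}$ with the two projections $\proj_{v_1}(\proj_\tau\Lambda)$ and $\proj_{v_2}(\proj_\tau\Lambda)$ of this wedge. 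If $\Lambda_s\sim\lambda_1$ while $\Lambda_{s'}\sim\lambda_2$, this would realize $\lambda_1$ and $\lambda_2$ as the two projections of one non-singular $\Z_2$-characteristic map on $\wed_v(K)$, contradicting the $\mathbf{a}_i$-versus-$\mathbf{b}_i$ comparison recorded before the statement (done for $v=F_1$, and valid for an arbitrary facet $v$ after applying the dihedral symmetry of the standard heptagonal Gale diagram, which permutes the facets of $P_{[7]}$ transitively). Hence all $\Lambda_s$ lie in one class and $\Lambda=\lambda_1(J)$ or $\Lambda=\lambda_2(J)$; these are distinct, since a D-J equivalence between them would project to one between $\lambda_1$ and $\lambda_2$ (the projections of $\lambda_i(J)$ onto each $K_s$ being D-J equivalent to $\lambda_i$). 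So $[a_1,\dots,a_7]$ carries exactly two small covers up to D-J equivalence.

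For the second assertion, suppose some small cover over $P=[a_1,\dots,a_7]$ were a real toric manifold, i.e.\ the fixed-point set $M_\R$ of a toric manifold $M$ with orbit space $P$. Then there is a complete non-singular fan $\Sigma$ on $K(J)$, and projecting it with respect to the face $V(K(J))\setminus\operatorname{im} s$ (Lemma~\ref{lem:projection}, which preserves completeness and non-singularity) yields a complete non-singular fan on $K=\partial P_{[7]}^\ast$, that is, a toric manifold over $P_{[7]}$ --- contradicting Theorem~\ref{thm:notoricvariety} in the case $k=4$. Hence neither $\lambda_1(J)$ nor $\lambda_2(J)$ is a real toric manifold.

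The one genuinely nonformal step is the single-wedge non-gluing statement. Its substance is exactly the $\mathbf{a}_i/\mathbf{b}_i$ computation already carried out for $\wed_{F_1}(P_{[7]})$; what must still be said is (i) the passage between that ``row-block'' description and the language of projections, via the normal form \eqref{eqn:matrixwedge} and Proposition~\ref{prop:chrwedge}, and (ii) the reduction of a general wedge vertex to $F_1$ through the heptagon's symmetry. The remaining ingredients --- connectivity of the poset of sections, recognizing $\wed_v(K)$ as a link in $K(J)$, and the projection argument of the previous paragraph --- are routine on top of Corollary~\ref{cor:projectionofKJ} and Lemma~\ref{lem:projection}.
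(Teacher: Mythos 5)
Your proof is correct and follows essentially the same route as the paper's: the two explicit classes $\lambda_1,\lambda_2$ over $P_{[7]}$, the $\mathbf{a}_i$-versus-$\mathbf{b}_i$ non-gluing computation for a single wedge, the projection machinery of Corollary~\ref{cor:projectionofKJ}, and Theorem~\ref{thm:notoricvariety} for the second assertion. Your connectivity-of-sections argument and the symmetry reduction to $v=F_1$ simply make explicit the step that the paper compresses into ``this holds for arbitrary $P_{[7]}(J)$.''
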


    For a fixed simple polytope $P$, we remark there is a computer algorithm \cite[Algorithm~4.1]{GS03} to find every small cover over $P$, although the printed version in \cite{GS03} has small error on it. We present the corrected version here.

    \begin{itemize}
        \item \textbf{Input:} $FP=$ set of subsets $I\subset\{1,\ldots,m\}$ such that $\bigcap_{i\in I}F_i$ is a face of $P$.
        \item \textbf{Output:} $\Gamma=$ list of $\Z_2$-vectors $(\lambda_1,\ldots,\lambda_m)$ such that the first $n$ vectors $\lambda_1,\ldots,\lambda_n$ form the standard basis for $\Z_2^n$.
        \item \textbf{Initialization:} Set the following:

            $\lambda_1\leftarrow(1,0,\ldots,0)$, $\lambda_2\leftarrow(0,1,\ldots,0)$, $\cdots$,
            $\lambda_n\leftarrow(0,0,\ldots,1)$,

            $\Gamma\leftarrow\varnothing$,

            $S\leftarrow$ list of nonzero elements of $\Z_2^n$,

            $i\leftarrow n+1$.
        \item \textbf{Procedure:}

            \begin{enumerate}
                \item Set $S_i \leftarrow S$.
                \item For all $I\in FP$ of the form $I=\{i_1,\ldots,i_k\}\cup\{i\}$ with $1\le i_i\le\cdots\le i_k \le i$, remove the vector $\lambda_{i_1}+\cdots+\lambda_{i_k}$ from the list $S_i$.
                \item If $i=n$ then STOP.
                \item If $S_i=\varnothing$ then $i\leftarrow i-1$ and go to (3).
                \item Set $\lambda_i\leftarrow S_i[1]$ (where $S_i[1]$ denotes the first element of the list $S_i$).
                \item Remove $\lambda_i$ from the list $S_i$.
                \item If $i=m$, then add the vector to the list and go to (4).
                \item If $i<m$, then set $i\leftarrow i+1$ and go to (1).
            \end{enumerate}
    \end{itemize}

    For example, the algorithm above applied to the polytope $[1,2,1,2,2]$ gives the following list of matrices $B_i$ such that $\lambda = (I_5 \mid B_i)$ is a characteristic matrix of the polytope $[1,2,1,2,2]$ where $I_5$ is a $5\times 5$ identity matrix. Note that the facets of $[1,2,1,2,2]$ are ordered so that the first five facets intersect and therefore the first five columns forms an identity matrix. We used the ordering $1,2_1,3,4_1,5_1,2_2,4_2,5_2$.
    \[
        B_1=\begin{pmatrix}
            0 & 1 & 1 \\
            1 & 0 & 0 \\
            1 & 1 & 0 \\
            0 & 1 & 0 \\
            0 & 0 & 1
        \end{pmatrix},\:
        B_2=\begin{pmatrix}
            0 & 1 & 1 \\
            1 & 0 & 0 \\
            1 & 1 & 1 \\
            0 & 1 & 0 \\
            0 & 0 & 1
        \end{pmatrix},\:
        B_3=\begin{pmatrix}
            0 & 1 & 1 \\
            1 & 0 & 1 \\
            1 & 1 & 0 \\
            0 & 1 & 0 \\
            0 & 0 & 1
        \end{pmatrix},\:
        B_4=\begin{pmatrix}
            0 & 1 & 1 \\
            1 & 0 & 1 \\
            1 & 1 & 1 \\
            0 & 1 & 0 \\
            0 & 0 & 1
        \end{pmatrix},\:
        \]
    \[
        B_5=\begin{pmatrix}
            0 & 1 & 1 \\
            1 & 0 & 0 \\
            1 & 1 & 1 \\
            0 & 1 & 0 \\
            1 & 0 & 1
        \end{pmatrix},\:
        B_6=\begin{pmatrix}
            0 & 1 & 1 \\
            1 & 0 & 0 \\
            1 & 1 & 0 \\
            0 & 1 & 0 \\
            1 & 1 & 1
        \end{pmatrix},\:
        B_7=\begin{pmatrix}
            0 & 1 & 1 \\
            1 & 0 & 0 \\
            1 & 1 & 1 \\
            1 & 1 & 0 \\
            0 & 0 & 1
        \end{pmatrix},\:
        B_8=\begin{pmatrix}
            0 & 1 & 1 \\
            1 & 0 & 0 \\
            1 & 1 & 1 \\
            1 & 1 & 0 \\
            1 & 0 & 1
        \end{pmatrix},\:
    \]
    \[
        B_9=\begin{pmatrix}
            1 & 0 & 1 \\
            1 & 0 & 0 \\
            0 & 1 & 1 \\
            0 & 1 & 0 \\
            0 & 0 & 1
        \end{pmatrix},\:
        B_{10}=\begin{pmatrix}
            1 & 0 & 1 \\
            1 & 1 & 0 \\
            0 & 1 & 1 \\
            0 & 1 & 0 \\
            0 & 0 & 1
        \end{pmatrix},\:
        B_{11}=\begin{pmatrix}
            1 & 1 & 1 \\
            1 & 0 & 0 \\
            0 & 1 & 1 \\
            0 & 1 & 0 \\
            0 & 0 & 1
        \end{pmatrix},\:
        B_{12}=\begin{pmatrix}
            1 & 1 & 1 \\
            1 & 1 & 0 \\
            0 & 1 & 1 \\
            0 & 1 & 0 \\
            0 & 0 & 1
        \end{pmatrix},\:
    \]
    \[
        B_{13}=\begin{pmatrix}
            1 & 1 & 1 \\
            1 & 0 & 0 \\
            0 & 1 & 1 \\
            0 & 1 & 0 \\
            1 & 0 & 1
        \end{pmatrix},\:
        B_{14}=\begin{pmatrix}
            1 & 0 & 1 \\
            1 & 0 & 0 \\
            0 & 1 & 1 \\
            1 & 1 & 1 \\
            0 & 0 & 1
        \end{pmatrix},\:
        B_{15}=\begin{pmatrix}
            1 & 1 & 1 \\
            1 & 0 & 0 \\
            0 & 1 & 1 \\
            1 & 1 & 0 \\
            0 & 0 & 1
        \end{pmatrix},\:
        B_{16}=\begin{pmatrix}
            1 & 1 & 1 \\
            1 & 0 & 0 \\
            0 & 1 & 1 \\
            1 & 1 & 0 \\
            1 & 0 & 1
        \end{pmatrix},\:
    \]
    \[
        B_{17}=\begin{pmatrix}
            1 & 0 & 1 \\
            1 & 0 & 0 \\
            1 & 1 & 0 \\
            0 & 1 & 0 \\
            0 & 0 & 1
        \end{pmatrix},\:
        B_{18}=\begin{pmatrix}
            1 & 0 & 1 \\
            1 & 0 & 0 \\
            1 & 1 & 0 \\
            0 & 1 & 0 \\
            1 & 1 & 1
        \end{pmatrix},\:\text{and }
        B_{19}=\begin{pmatrix}
            1 & 0 & 1 \\
            1 & 0 & 0 \\
            1 & 1 & 0 \\
            1 & 1 & 1 \\
            0 & 0 & 1
        \end{pmatrix}.
        \]
    Using this data, we have an alternative way to find all characteristic maps over $[a_1,a_2+1,a_3,a_4+1,a_5+1]$ ($a_2$, $a_4$, and $a_5$ are allowed to be zero). For a suitable basis, its characteristic matrix has the form
    \[
        \left(
            \begin{array}{ccc|ccc|ccc|ccc}
              & & & & & & & & &  & A_{11} &  \\
              & I_{n_1} & & & 0 & & & 0 & &  & \vdots &        \\
              & & & & & & & & &  & A_{1n_1} &  \\ \hline
              & & & & & & & & &   &   &   \\
              &0 & & &\ddots & & & 0& &   & \vdots &   \\
              & & & & & & & & &   &   &   \\ \hline
              & & & & & & & & &   & A_{51} &   \\
              &0 & & &0 & & & I_{n_5} & &   & \vdots &   \\
              & & & & & & & & &   & A_{5n_5}  &
            \end{array}
        \right),
    \]
    where $A_{i,j}$ are 3-dimensional row vectors such that for any $j_i$'s, the matrix
    \[
        \begin{pmatrix}
            A_{1j_1} \\
            \vdots  \\
            A_{5j_5}
        \end{pmatrix}
    \]
    is one of $B_i$'s above.

\section{Application: the Lifting problem}\label{sec:liftingproblem}
    Recall that every toric manifold has its conjugation map. Like toric manifolds, every topological toric manifold $M$ as a $T^n$-manifold has a conjugation map whose fixed points make a real topological toric manifold $M'$ as a $(\Z_2)^n$-manifold. In this case, $\lambda(M')$ is exactly the modulo 2 reduction of $\lambda(M)$. Hence, it seems natural to ask whether the converse holds or not. From this viewpoint, L\"{u} presented the following problem, so called the \emph{lifting problem}, at the conference on toric topology held in Osaka in November 2011.\footnote{\url{http://www.sci.osaka-cu.ac.jp/~masuda/toric/torictopology2011_osaka.html}}

    \begin{question}[Lifting problem for real topological toric manifolds]
    Let $K$ be a fan-like simplicial sphere of dimension $n-1$ with $m$ vertices.
    Let $M$ be a real topological toric manifold over $K$. Then, is there a topological toric manifold $N$, called a \emph{lifting} of $M$, such that $M$ is the fixed point set of the conjugation on $N$? Equivalently, for any non-singular characteristic map $\lambda \colon V(K) \to \Z_2^n$ over $\Z_2$, is there a non-singular characteristic map $\widetilde{\lambda}$ over $\Z$, called a \emph{lifting} of $\lambda$, such that
    $$
        \xymatrix{
            & \Z^n \ar[d]^{\text{mod $2$}}\\
        V(K)\ar[ur]^{\widetilde{\lambda}} \ar[r]_{\lambda} & \Z_2^n,
        }
    $$ where $V(K)$ is the vertex set of $K$?
    \end{question}

    When $n \leq 3$, it is known that the answer to the lifting problem is affirmative. In this paper, we answer to the problem affirmatively when $m \leq n+3$.

    \begin{lemma} \label{lem:linearalgebra}
        Let $A = (a_{ij})_{n \times n}$ be an $n \times n$ matrix with integer entries such that $\det A$ is odd. Then, there is an $n \times n$ matrix $B = (b_{ij})_{n \times n}$ such that $\det B = 1$ and $b_{ij}$ is congruent to $a_{ij}$ up to modulo $2$ for all $i$ and $j$.
    \end{lemma}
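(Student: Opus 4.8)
The plan is to recognize the statement as the surjectivity of the reduction homomorphism $\mathrm{SL}_n(\Z)\to \mathrm{SL}_n(\Z_2)$ and to exploit the fact that, over a field, special linear groups are generated by elementary matrices. First I would observe that since $\det A$ is odd, the reduction $\bar A$ of $A$ modulo $2$ lies in $\mathrm{GL}_n(\Z_2)$; and because $\Z_2$ is a field whose only nonzero element is $1$, we have $\mathrm{GL}_n(\Z_2)=\mathrm{SL}_n(\Z_2)$, so in particular $\det\bar A=1$. Thus the lemma is equivalent to the assertion that every element of $\mathrm{SL}_n(\Z_2)$ is the mod-$2$ reduction of an element of $\mathrm{SL}_n(\Z)$, and it suffices to exhibit one such lift for the given $\bar A$.

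Next I would invoke the standard fact that $\mathrm{SL}_n$ of any field is generated by the elementary (transvection) matrices $I+\mu E_{ij}$ with $i\neq j$ (Gaussian elimination produces a factorization into such matrices times a diagonal matrix of determinant $\det\bar A$, which over $\Z_2$ is the identity since $1$ is the sole unit; a zero pivot can always be remedied by first adding another row, so no row transposition or scaling is needed). Hence I can write $\bar A=\bar E_1\cdots \bar E_k$ with each $\bar E_\ell$ of the form $I+E_{i_\ell j_\ell}$ over $\Z_2$. Now I would lift each factor to the integral elementary matrix $E_\ell:=I+E_{i_\ell j_\ell}\in\mathrm{SL}_n(\Z)$, which has determinant exactly $1$ and reduces modulo $2$ to $\bar E_\ell$, and set $B:=E_1\cdots E_k$. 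Then $\det B=1$ and $B\equiv A\pmod 2$ entrywise, which is precisely what is claimed. The case $n=1$ is immediate: $a_{11}$ odd forces $1\equiv a_{11}\pmod 2$, so $B=(1)$ works.

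I do not expect a genuine obstacle; the argument is a clean reduction. The one point deserving a word of care is the passage from "$\mathrm{SL}_n$ of a field is generated by transvections up to a diagonal factor" to an honest product of transvections for $\bar A$ — that is, checking that the scaling factor coming out of Gaussian elimination is trivial over $\Z_2$ — since otherwise the lift would only have odd (not unit) determinant. Over $\Z_2$ this is automatic because the only nonzero scalar is $1$, so the entire product, not merely a product up to a unit, lifts to an integer matrix of determinant exactly $1$.
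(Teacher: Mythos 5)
Your proof is correct, but it takes a genuinely different route from the paper's. The paper argues by induction on $n$: it expands $\det A$ along the first row, uses the inductive hypothesis to arrange $\det A_{11}=1$, and then replaces the single entry $a_{11}$ by $a_{11}-\det A+1$ (an even perturbation) to force $\det B=1$. You instead observe that $\bar A\in\mathrm{GL}_n(\Z_2)=\mathrm{SL}_n(\Z_2)$, factor $\bar A$ into transvections $I+E_{ij}$ over the field $\Z_2$, and lift each factor verbatim to $\mathrm{SL}_n(\Z)$; this is the standard proof that the reduction map $\mathrm{SL}_n(\Z)\to\mathrm{SL}_n(\Z/N)$ is surjective, specialized to $N=2$. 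Your approach is more conceptual and generalizes immediately to other moduli, at the cost of invoking generation of $\mathrm{SL}_n$ of a field by elementary matrices, and it controls only the residues of the entries rather than producing a lift that agrees with $A$ except in one entry. It also sidesteps a point the paper's induction glosses over: the cofactor expansion only guarantees that \emph{some} minor $\det A_{1i}$ is odd, not $\det A_{11}$ specifically, so the paper's step ``one can assume $\det A_{11}=1$'' tacitly requires a preliminary column permutation (harmless, but unstated), whereas your Gaussian elimination over $\Z_2$ handles zero pivots by row additions and needs no such adjustment. Both arguments establish the lemma.
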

    \begin{proof}
        We use an induction on $n$. It is obvious for $n=1$. Let $A$ be an $n\times n$ matrix. Let us denote by $A_{ij}$ the minor of $A$ obtained by deleting the $i$-th row and the $j$-th column of $A$. Then recall
        \[
            \det A = \sum_{i=1}^n a_{1i}\det A_{1i}.
        \]
        One can assume that $\det A_{11} = 1$ by induction hypothesis. Replace $a_{11}$ with $a_{11} - \det A +1$ to obtain $B$ which is available since $1-\det A $ is even. Then $\det B = 1$ and the proof is done.
    \end{proof}

    The above lemma says that it is enough to consider the D-J equivalent class of toric objects for the lifting problem. To be more precise, let $\lambda$ be a characteristic map over $\Z_2$ of dimension $n$. Then every characteristic map over $\Z_2$ which is D-J equivalent class is given by the matrix $R\lambda$ when $R $ is an $n\times n$ matrix over $\Z_2$ whose determinant is $1$. If $\widetilde{\lambda}$ is a lifting of $\lambda$, then the above lemma guarantees that there is an $n \times n$ matrix $\widetilde{R}$ with $\det \widetilde{R} = 1$ such that $R$ is the modulo 2 reduction of $\widetilde{R}$ and therefore $\widetilde{R}\widetilde{\lambda}$ is a characteristic map D-J equivalent to $\widetilde{\lambda}$.

    \begin{corollary}
       Let $K$ be a fan-like simplicial sphere of dimension $n-1$ with at most $n+3$ vertices.
      Then any real topological toric manifold over $K$ can be realized as fixed points of the conjugation of a topological toric manifold.
    \end{corollary}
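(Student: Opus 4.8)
The plan is to stratify by the Picard number $\Pic(K)=m-n\le 3$. By Lemma~\ref{lem:linearalgebra} a real topological toric manifold lifts as soon as \emph{some} Davis--Januszkiewicz representative of its characteristic map lifts: if $\widetilde\lambda$ is an integral lift of $\lambda$ and $\lambda'=R\lambda$ for a matrix $R$ over $\Z_2$ with $\det R=1$, then Lemma~\ref{lem:linearalgebra} furnishes an integral $\widetilde R$ with $\det\widetilde R=1$ reducing to $R$, and $\widetilde R\widetilde\lambda$ lifts $\lambda'$. So it suffices to lift one representative of each D--J class, and these are precisely the maps classified in Sections~\ref{sec:applications} and~\ref{sec:smallcover}. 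If $\Pic(K)=1$ then $K=\partial\Delta^n$ and the only map (that of $\R P^n$) is the reduction of the characteristic map of $\CP^n$; if $K$ is a join of boundaries of simplices (in particular if $\Pic(K)=2$), every small cover over it is a generalized real Bott manifold by \cite{CMS10}, hence a real toric variety, hence the conjugation fixed set of the corresponding generalized Bott toric manifold, whose characteristic map reduces modulo $2$ to the given one. We are thus reduced to $\Pic(K)=3$, where by Mani~\cite{Man72} $K$ is polytopal, $K=\partial P^*$ with $P=P_{[2k-1]}(J)$; by the classification recalled in the introduction (\cite{Ero08}, see also Remark~\ref{rem:existenceandclassificationoftoricandtopoltoric}) we may assume $k\in\{2,3,4\}$, since otherwise no small cover exists over $P$ and there is nothing to prove.

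For $k=2$ the standard Gale diagram is a triangle, $P$ is a product of three simplices, and the previous paragraph already applies. For $k=3$ the diagram is a pentagon and $P=P_5(a_1,\dots,a_5)$. Here I would read off the lift directly from the structural description: the $\Z_2$-characteristic matrix $\Lambda$ of Theorem~\ref{thm:realtoricoverpentagon} is precisely the modulo $2$ reduction of the integral matrix $\Lambda$ of Theorem~\ref{thm:toricoverpentagon}, once one chooses the pentagon type $A=A_2$ or $A=A_3$ (matching $\lambda_d\bmod 2$ according to the parity of $d$), lifts each unknown $n_{ij}\in\Z_2$ to $0$ or $1\in\Z$, and replaces the $\Z_2$-blocks $S_{a_i}$ by their integral counterparts (whose entry $-1$ reduces to $1$). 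By Theorem~\ref{thm:toricoverpentagon} the resulting integral matrix is a non-singular (indeed fan-giving) characteristic map over $\Z$, so it lifts the given small cover; equivalently, every small cover over $P_5(a_1,\dots,a_5)$ is a real toric variety and one takes the corresponding toric manifold.

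The case $k=4$ is the essential one, because by Proposition~\ref{prop:realtoricvarietyoverheptagon} the two small covers over $[a_1,\dots,a_7]=P_{[7]}(J)$ are \emph{not} real toric manifolds, so no \emph{fan-giving} integral lift can exist and one must produce a non-singular integral characteristic map that need not be fan-giving. By Corollary~\ref{cor:projectionofKJ} together with the observation that the canonical extension commutes with reduction modulo $2$ (immediate from the matrix form \eqref{eqn:canonicalmatrix}), it is enough to lift the two base maps $\lambda_1,\lambda_2$ over $P_{[7]}$ displayed in \eqref{eqn:charmapsoverheptagon}: then $\widetilde\lambda_i(J)$ is non-singular over $\Z$ by iterated application of Proposition~\ref{prop:chrwedge} (the projections of a canonical extension being the map itself), and it reduces modulo $2$ to $\lambda_i(J)$, the given small cover up to D--J. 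But every entry of $\lambda_1$ and $\lambda_2$ in \eqref{eqn:charmapsoverheptagon} already lies in $\{0,1\}$, so I would simply regard those same matrices as integer matrices $\widetilde\lambda_1,\widetilde\lambda_2$ over $\Z$ and verify, by the finite check over the fourteen maximal faces of $P^*_{[7]}$ listed in Table~\ref{tab:p7vertexsigns}, that each corresponding $4\times 4$ minor equals $\pm1$, i.e.\ that the columns over each face form an integral basis of $\Z^4$. Since $\partial P^*_{[7]}$ is polytopal, hence star-shaped and fan-like, $\widetilde\lambda_i$ together with any compatible topological-fan data defines a topological toric manifold $N$ whose conjugation fixes exactly the small cover $M$; this completes the proof.

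I expect the main obstacle to be conceptual rather than computational, and confined to the heptagon case: one must give up fan-givingness and work in the larger category of topological toric manifolds — which is legitimate only because polytopal spheres are star-shaped — and then carry out the unimodularity verification for $\widetilde\lambda_1,\widetilde\lambda_2$ on all fourteen facets of $P^*_{[7]}$ and propagate the lift along the wedge operations via $\widetilde\lambda_i\mapsto\widetilde\lambda_i(J)$. Everything in the $\Pic(K)\le 2$, triangle, and pentagon cases, as well as the passage from one D--J representative to all of them, is bookkeeping supplied by Theorem~\ref{thm:mainthm}, Theorem~\ref{thm:mainthm2}, and Lemma~\ref{lem:linearalgebra}.
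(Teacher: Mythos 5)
Your proposal is correct and follows essentially the same route as the paper's proof: reduce to the join-of-simplices, pentagon, and heptagon cases; handle the first two via the generalized real Bott / real toric variety structure (Theorems~\ref{thm:toricoverpentagon} and~\ref{thm:realtoricoverpentagon}); and in the heptagon case lift $\lambda_1,\lambda_2$ of \eqref{eqn:charmapsoverheptagon} by reading their $(0,1)$-entries as integers, checking non-singularity over $\Z$, and propagating along canonical extensions. The extra detail you supply (the D--J reduction via Lemma~\ref{lem:linearalgebra} and the explicit remark that the heptagon lift cannot be fan-giving) is consistent with, and already implicit in, the paper's argument.
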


    \begin{proof}
        By \cite{CMS10}, any real topological toric manifold $M$ over the join of boundaries of simplices is a generalized real Bott manifold which is real toric variety. Hence, there is a (projective) toric manifold whose fixed point set of the conjugation is $M$. Indeed, this toric manifold is a generalized Bott manifold.

        Hence, it is enough to consider the case when the number of vertices is $n+3$, and the Gale diagram of $P$ is either a pentagon or a heptagon where $\partial P^* = K$.
        If $P = P_5(a_1, \ldots, a_5)$, then, by Theorem~\ref{thm:toricoverpentagon} and Theorem~\ref{thm:realtoricoverpentagon}, our claim holds.
        If $P = [a_1, \ldots, a_7]$, then, by Proposition~\ref{prop:realtoricvarietyoverheptagon}, every small cover over $P$ can be obtained by canonical extensions from either $M(P_{[7]}, \lambda_1)$ or $M(P_{[7]}, \lambda_2)$ where $\lambda_1$ and $\lambda_2$ are given by \eqref{eqn:charmapsoverheptagon}. If we regard both $\lambda_1$ and $\lambda_2$ as $(0,1)$-matrix over $\Z$, then one can check that $\lambda_1$ and $\lambda_2$ are non-singular over $\Z$. Hence, so are their canonical extensions, which proves the corollary.
    \end{proof}

\section*{Acknowledgement}
    When the first author was visiting the Institute for Advanced Study (IAS) and the Rider university in February, 2012, he had a useful discussion with Professor Tony Bahri which inspired and stimulated this work. The authors would like to thank to him.

\end{document}